\newcommand{\1}{\ensuremath{\mathbf{1}}}
\theoremstyle{plain} \newtheorem{theorem}{Theorem}[section]
\theoremstyle{plain} \newtheorem{proposition}[theorem]{Proposition}
\theoremstyle{plain} 
\theoremstyle{plain} \newtheorem{lemma}[theorem]{Lemma}
\theoremstyle{remark}
\newcounter{hypA}
\newenvironment{hypA}{\refstepcounter{hypA}\begin{itemize}
\item[{\bf A\arabic{hypA}}]}{\end{itemize}}
\newcommand{\normL}[2]{\left\| #2 \right\|_{#1}}
\newcommand{\osc}{\mathrm{osc}}
\newcommand{\rme}{\mathrm{e}}
\newcommand{\eqdef}{\ensuremath{\stackrel{\mathrm{def}}{=}}}
\def\Xset{\mathbb{X}}
\def\Yset{\mathbb{Y}}
\def\Zset{\mathbb{Z}}
\newcommand{\pscal}[2]{\left\langle #1, #2 \right\rangle}
\newcommand{\Rset}{\mathbb{R}}
\newcommand{\eqsp}{\;}
\newcommand{\param}{\theta}
\newcommand{\paramset}{\Theta}
\newcommand{\rmd}{\mathrm{d}}
\newcommand{\rmB}{\mathrm{B}}
\newcommand{\rmL}{\mathrm{L}}
\newcommand{\bfy}{\mathbf{y}}
\newcommand{\bfY}{\mathbf{Y}}
\newcommand{\CExpparam}[3]{\mathbb{E}_{#3}\left[#1 \middle| #2\right]}
\newcommand{\CExp}[2]{\mathbb{E}_{}\left[#1\middle | #2\right]}
\newcommand{\PP}{\mathbb{P}}
\newcommand{\Staset}{\mathcal{L}}
\newcommand{\CE}[4][]
{
\ifthenelse{\equal{#1}{}}{\mathbb{E}^{#2}_{#3}\left[#4\right]}{\mathbb{E}^{#2}_{#3}\left[#4\middle | #1\right]}
}
\newcommand{\lpnorm}[2]{\ensuremath{\left\| #1 \right\|_{#2}}} 
\newcommand{\smoothfunc}[2]{\Phi_{#2}^{#1}}
\newcommand{\lyap}{\mathrm{W}}
\newcommand{\ps}[1]{#1-\mathrm{a.s.}}
\newcommand{\limEMmap}{\mathrm{G}}
\newcommand{\limEMmapt}{\mathrm{R}}
\newcommand{\mapS}{\bar{\mathrm{S}}}
\newcommand{\PPim}{\PP}
\newcommand{\Sset}{\mathcal{S}}
\newcommand{\averageparam}[1][]%
{\ifthenelse{\equal{#1}{}}{\ensuremath{\widetilde{\theta}}}{\ensuremath{\widetilde{\theta}}_{#1}}
}
\newcommand{\mix}[1][]%
{\ifthenelse{\equal{#1}{a}}{\alpha}{\beta}
}
\newcommand{\epart}[2]{\xi_{#1}^{#2}}
\newcommand{\ewght}[2]{\omega_{#1}^{#2}}
\newcommand{\GisCentered}{\cite[Lemma 3]{douc:garivier:moulines:olsson:2010}}
\newcommand{\GNorm}{\cite[Lemma 10]{douc:garivier:moulines:olsson:2010}}
\newcommand{\addfunc}[1]{\mathsf{S}_{#1}}
\def\sigmaX{\mathcal{X}}
\def\sigmaY{\mathcal{Y}}
\newcommand{\m}{\ensuremath{m}}
\newcommand{\adjfunc}[4][]
{\ifthenelse{\equal{#1}{}}{\ifthenelse{\equal{#4}{}}{\upsilon_{#2}}{\upsilon_{#2}(#4)}}
{\ifthenelse{\equal{#1}{smooth}}{\ifthenelse{\equal{#4}{}}{\tilde{\upsilon}_{#2}}{\tilde{\upsilon}_{#2}(#4)}}
{\ifthenelse{\equal{#1}{fully}}{\ifthenelse{\equal{#4}{}}{\upsilon^\star_{#2}}{\upsilon^\star_{#2}(#4)}}{\mathrm{erreur}}}}}
\newcommand{\kiss}[3][]
{\ifthenelse{\equal{#1}{}}{q_{#2}}
{\ifthenelse{\equal{#1}{fully}}{q^{\star}_{#2}}
{\ifthenelse{\equal{#1}{smooth}}{\tilde{r}_{#2}}{\mathrm{erreur}}}}}
\newcommand{\Kiss}[3][]
{\ifthenelse{\equal{#1}{}}{P_{#2}}
{\ifthenelse{\equal{#1}{fully}}{P^{\star}_{#2}}
{\ifthenelse{\equal{#1}{smooth}}{\tilde{R}_{#2}}{\mathrm{erreur}}}}}
\newcommand{\XinitIS}[2][]{\ifthenelse{\equal{#1}{}}{\ensuremath{\rho_{#2}}}{\ensuremath{\check{\rho}_{#2}}}}
\newcommand{\filt}[2][]%
{
\ifthenelse{\equal{#1}{}}{\ensuremath{\phi_{#2}}}%
{\ifthenelse{\equal{#1}{hat}}{\ensuremath{\phi^{N}_{#2}}}
{\ifthenelse{\equal{#1}{tilde}}{\ensuremath{\tilde{\phi}^{N}_{#2}}}
{\ifthenelse{\equal{#1}{tar}}{\ensuremath{\phi^{N,\mathrm{t}}_{#2}}}
{\ifthenelse{\equal{#1}{aux}}{\ensuremath{\phi^{N,\mathrm{a}}_{#2}}}
}
}
}
}
}
\newcommand{\instrpostaux}[2]{\ensuremath{\pi_{#1#2}}}
\newcommand{\BK}[1]{\mathrm{B}_{#1}}
\newcommand{\post}[3][]%
{
\ifthenelse{\equal{#1}{}}{\ensuremath{\phi_{#2|#3}}}%
{\ifthenelse{\equal{#1}{hat}}{\ensuremath{\phi^{N}_{#2|#3}}}
{\ifthenelse{\equal{#1}{tilde}}{\ensuremath{\tilde{\phi}^{N}_{#2|#3}}}
{\ifthenelse{\equal{#1}{tar}}{\ensuremath{\phi^{N,\mathrm{t}}_{#2|#3}}}}
}
}
}
\newcommand{\chunk}[4][]%
{\ifthenelse{\equal{#1}{}}{\ensuremath{{#2}_{#3:#4}}}{\ensuremath{#2^#1}_{#3:#4}}
}
\newcommand{\sumwght}[2][]{%
\ifthenelse{\equal{#1}{}}{\ensuremath{\Omega_{#2}}}{\ensuremath{\Omega_{#2}^{#1}}}}
\begin{document}
  
\title{Convergence of a Particle-based Approximation of the Block Online Expectation Maximization Algorithm}
        \author{Sylvain Le Corff\footnote{This
    work is partially supported by the French National Research Agency, under
    the programs ANR-07-ROBO-0002 and ANR-08-BLAN-0218.} \footnote{LTCI, CNRS and TELECOM ParisTech, 46 rue Barrault 75634 Paris Cedex 13, France}\; and Gersende Fort\footnotemark[\value{footnote}]}

\maketitle

\begin{abstract}
  Online variants of the Expectation Maximization (EM) algorithm have recently been
  proposed to perform parameter inference with large data sets or data streams,
  in independent latent models and in hidden Markov models. Nevertheless, the
  convergence properties of these algorithms remain an open problem at least in
  the hidden Markov case.  This contribution deals with a new online EM
  algorithm which updates the parameter at some deterministic times. Some convergence results have been derived even in general latent models such as
  hidden Markov models. These properties rely on the assumption that some
  intermediate quantities are available in closed form or can be approximated by Monte Carlo methods when the Monte Carlo error vanishes rapidly enough. In this
  paper, we propose an algorithm which approximates these quantities using
  Sequential Monte Carlo methods. The convergence of this algorithm and of an
  averaged version is established and their performance is illustrated through
  Monte Carlo experiments.
\end{abstract}

\maketitle

\bigskip 

\hrule
\medskip 

This extended version of the paper ``Convergence of a Particle-based
Approximation of the Block Online Expectation Maximization Algorithm``, by S.
Le Corff and G. Fort, provides detailed proofs which have been omitted in the
submitted paper since they are very close to existing results. These additional
proofs are postponed to Appendix~\ref{app:extended:version}.

\medskip
\hrule

\bigskip 

\section{Introduction}
\label{sec:intro}
The Expectation Maximization (EM) algorithm is a well-known iterative algorithm
to solve maximum likelihood estimation in incomplete data
models, see ~\cite{dempster:laird:rubin:1977}.  Each iteration is decomposed into two
steps: in the E-step the conditional expectation of the complete log-likelihood
(log of the joint distribution of the hidden states and the observations) given
the observations is computed; and the M-step updates the parameter estimate. The EM algorithm is mostly practicable if the model belongs to the curved exponential family, see  \cite[Section $1.5$]{mclachlan:krishnan:1997} and \cite[Section $10.1$]{cappe:moulines:ryden:2005}, so that we assume below that our model belongs to this family. Under mild regularity conditions, this algorithm is
known to converge to the stationary points of the log-likelihood of the
observations, see \cite{wu:1983}.  However, the original EM algorithm
cannot be used to perform online estimation or when the inference task relies
on large data sets. Each iteration requires the whole data set and each
piece of data needs to be stored and scanned to produce a new parameter
estimate.  {\em Online} variants of the EM algorithm were first proposed
for independent and identically distributed (i.i.d.)  observations: \cite{cappe:moulines:2009} proposed to replace the original E-step by a
stochastic approximation using the new observation. Solutions have also been
proposed  in hidden Markov models
(HMM): ~\cite{cappe:2011} provides an algorithm for finite state-space HMM which
relies on recursive computations of the filtering distributions combined with a
stochastic approximation step.  Note that, since the state-space is finite,
deterministic approximations of these distributions are available.  This
algorithm has been extended to the case of general state-space models, the
approximations of the filtering distributions being handled with Sequential
Monte Carlo (SMC) algorithms, see~\cite{cappe:2009}, \cite{delmoral:doucet:singh:2010a} and \cite{lecorff:fort:moulines:2011}.
Unfortunately, it is quite challenging to address the asymptotic behavior of
these algorithms (in the HMM case) since the recursive computation of the
filtering distributions relies on approximations which are really difficult to
control.

In \cite{lecorff:fort:2011}, another online variant of the EM algorithm in HMM
is proposed, called the Block Online EM (BOEM) algorithm. In this case, the
data stream is decomposed into blocks of increasing sizes. Within each block,
the parameter estimate is kept fixed and the update occurs at the end of the
block.  This update is based on a single scan of the observations, so that it
is not required to store any block of observations.~\cite{lecorff:fort:2011}
provides results on the convergence and on the convergence rates of the
BOEM algorithms. These analyses are established when the E-step (computed on each block) is available in closed form and when it can be approximated using Monte Carlo methods, under an assumption on the $\rmL_{p}$-error of the Monte Carlo approximation. 

In this paper, we consider the case when the E-step of the  BOEM algorithm is computed with
SMC approximations: the filtering distributions are approximated using a set of
random weighted particles, see
\cite{cappe:moulines:ryden:2005} and \cite{delmoral:2004}. The Monte Carlo approximation
is based on an online variant of the Forward Filtering Backward Smoothing
algorithm (FFBS) proposed in \cite{cappe:2011} and \cite{delmoral:doucet:singh:2010a}.
This method is appealing for two reasons: first, it can be implemented forwards in time i.e. within a block, each observation is scanned once and
never stored and the approximation computed on each block does not require a
backward step - this is crucial in our online estimation framework. Secondly,
recent work on SMC approximations provides $\rmL_{p}$-mean control of the Monte
Carlo error, see e.g.  \cite{dubarry:lecorff:2011} and \cite{delmoral:doucet:singh:2010b}.
This control, combined with the results in \cite{lecorff:fort:2011}, sparks off
the convergence results and the convergence rates provided in this
contribution.

The paper is organized as follows: our new algorithm called the {\em Particle Block
  Online EM} algorithm (P-BOEM) is derived in
Section~\ref{sec:BOEM:description} together with an {\em averaged} version.
Section~\ref{sec:MCexperiments} is devoted to practical applications: the P-BOEM algorithm is
used to perform parameter inference in stochastic volatility models and in the
more challenging framework of the Simultaneous Localization And Mapping problem
(SLAM).  The convergence properties and the convergence rates of the P-BOEM algorithms are given in Section~\ref{sec:convergence}.

\section{The Particle Block Online EM algorithms}
\label{sec:BOEM:description}
In Section~\ref{subsec:BOEM:description:notation}, we fix notation that will
be used throughout this paper. We then derive our online algorithms in Sections
\ref{subsec:BOEM:description:PBOEM} and
\ref{subsec:BOEM:description:PBOEM:aver}. We finally detail, in
Section~\ref{subsec:BOEM:description:SMC}, the SMC procedure that makes our
algorithm a true online algorithm.

\subsection{Notations and Model assumptions}
\label{subsec:BOEM:description:notation}
A hidden Markov model on $\Xset\times \Yset$ is defined by an initial
distribution $\chi$ on $(\Xset,\sigmaX)$ and two families of transition
kernels. In this paper, the transition kernels are parametrized by
$\param\in\paramset$, where $\paramset \subseteq \Rset^{d_\param}$ is a compact set.  In the sequel,
the initial distribution $\chi$ on $(\Xset,\sigmaX)$ is assumed to be known and
fixed. The parameter is estimated online in the maximum likelihood sense using
a sequence of observations $\bfY$. Online maximum likelihood parameter
inference algorithms were proposed either with a gradient approach or an
EM approach. In the case of finite state-spaces HMM, \cite{legland:mevel:1997}
proposed a recursive maximum likelihood procedure.  The asymptotic properties
of this algorithm have recently been addressed in \cite{tadic:2010}. This
algorithm has been adapted to general state-spaces HMM with SMC methods (see
\cite{doucet:poyiadjis:singh:2009}). The main drawback of gradient methods is
the necessity to scale the gradient components. As an alternative to performing
online inference in HMM, online EM based algorithms have been proposed for
finite state-spaces (see \cite{cappe:2011}) or general state-spaces HMM (see
\cite{cappe:2009}, \cite{delmoral:doucet:singh:2010a} and \cite{lecorff:fort:2011}).
\cite{delmoral:doucet:singh:2010a} proposed a SMC method giving encouraging
experimental results. Nevertheless, it relies on a combination of stochastic
approximations and SMC computations so that its analysis is quite challenging.
In \cite{lecorff:fort:2011}, the convergence of an online EM based algorithm is
established. This algorithm requires either the exact computation of intermediate
quantities (available explicitly only in finite state-spaces HMM or in linear
Gaussian models) or the use of Monte Carlo methods to approximate these quantities. We propose to apply this algorithm to general models where these quantities are replaced by SMC approximations. We prove that the Monte Carlo error is controlled in such a way that the convergence properties of \cite{lecorff:fort:2011} hold for the P-BOEM algorithms.
 
We now detail the model assumptions. Consider a family of transition kernels
$\{m_\param(x,x') \rmd \lambda(x')\}_{\param\in\paramset}$ on
$\Xset\times\sigmaX$, where $\Xset$ is a general state-space equipped with a
countably generated $\sigma$-field $\sigmaX$, and $\lambda$ is a finite measure on $(\Xset,\sigmaX)$. Let $\{g_\param(x,y) \rmd
\nu(y)\}_{\param\in\paramset}$ be a family of transition kernels on $\Xset
\times \sigmaY$, where $\Yset$ is a general space endowed with a countably
generated $\sigma$-field $\sigmaY$ and $\nu$ is a measure on
$(\Yset,\sigmaY)$. Let $\bfY=\{\bfY_t\}_{t\in\Zset}$ be the observation process
defined on $\left(\Omega,\PPim,\mathcal{F}\right)$ and taking values in
$\Yset^\Zset$. The batch EM algorithm is an offline maximum likelihood procedure which
iteratively produces parameter estimates using the complete data log-likelihood
(log of the joint distribution of the observations and the states) and a fixed
set of observations, see \cite{dempster:laird:rubin:1977}. In the HMM context
presented above, given $T$ observations $\bfY_{1:T}$, the missing
data $x_{0:T}$ and a parameter $\param$, the complete data log-likelihood may
be written as (up to the initial distribution $\chi$ which is assumed to be
known)
\begin{equation}
  \label{eq:complete:loglikeli}
  \ell_{\param}(x_{0:T},\bfY_{1:T})\eqdef \sum_{t=1}^{T}\left\{\log m_{\param}(x_{t-1},x_{t})+\log g_{\param}(x_{t},\bfY_{t})\right\}\eqsp,
\end{equation}
where we use $x_{r:t}$ as a shorthand notation for the sequence $(x_r,\dots, x_t)$, $r \leq t$.
Each iteration of the batch EM algorithm is decomposed into two steps. The E-step
computes, for all $\param\in\paramset,$ an expectation of the complete data
log-likelihood under the conditional probability of the hidden states given the
observations and the current parameter estimate $\hat \param$.  In the HMM
context, due to the additive form of the complete data log-likelihood
(\ref{eq:complete:loglikeli}), the E-step is decomposed into $T$ expectations
under the conditional probabilities $\smoothfunc{\chi,0}{\hat
  \param,t,T}(\cdot,\bfy)$ where
\begin{equation}
\label{eq:define-Phi}
\smoothfunc{\chi,r}{\param,s,t}(h,\bfy) \eqdef \frac{\int \chi(\rmd x_r) \{
  \prod_{i=r}^{t-1} m_\param(x_i, x_{i+1})g_{\param}(x_{i+1},\bfy_{i+1}) \} \,
  h(x_{s-1},x_{s},\bfy_{s}) \, \rmd \lambda(x_{r+1:t})}{\int \chi(\rmd x_r)
  \{\prod_{i=r}^{t-1} m_\param(x_i, x_{i+1})g_{\param}(x_{i+1},\bfy_{i+1}) \}\,
  \rmd \lambda(x_{r+1:t})}\eqsp,
\end{equation}
for any bounded function $h$, any $\param \in\paramset$, any $r <s \leq t$ and
any sequence $\bfy \in \Yset^\Zset$. Then, given the current value of the
parameter $\hat\param$, the E-step amounts to computing the quantity
\begin{equation}
\label{eq:QEM}
Q_T(\param,\hat\param) \eqdef \frac{1}{T} \sum_{t=1}^{T}\smoothfunc{\chi,0}{\hat\param,t,T}\left(
  \log m_{\param}+\log g_{\param}, \bfY\right)\eqsp,
\end{equation}
for any $\param \in \paramset$. The M-step sets the new parameter estimate as a
maximum of this expectation over $\param$. 

The computation of $\param \mapsto Q_{T}(\param,\hat\param)$ for any $\param
\in \paramset$ is usually intractable except in the case of complete data likelihood belonging to the curved exponential family, see  \cite[Section $1.5$]{mclachlan:krishnan:1997} and \cite[Section $10.1$]{cappe:moulines:ryden:2005}. Therefore, in the sequel, the
following assumption is assumed to hold:
\begin{hypA} 
\label{assum:exp}
  \begin{enumerate}[(a)]
  \item \label{assum:exp:decomp}There exist continuous functions $\phi :
    \paramset \to \Rset$, $\psi : \paramset \to \Rset^d$ and $S: \Xset \times
    \Xset \times \Yset \to \Rset^d$ s.t.
\begin{equation*}
  \label{eq:exponential:family}
  \log m_\param(x,x') + \log g_\param(x',y) = \phi(\param) +
  \pscal{S(x,x,',y)}{\psi(\param)} \eqsp,
\end{equation*}
where $\pscal{\cdot}{\cdot}$ denotes the scalar product on $\Rset^d$. 
\item \label{assum:exp:convex} There exists an open subset $\Sset$ of $\Rset^d$
  that contains the convex hull of $S(\Xset \times \Xset \times \Yset)$.
\item \label{assum:exp:max}There exists a continuous function $\bar \param :
  \Sset \to \paramset$ s.t. for any $s \in \Sset$,
\[
\bar \param(s) = \mathrm{argmax}_{\param \in \paramset} \; \left\{ \phi(\param) +
  \pscal{s}{\psi(\param)} \right\} \eqsp.
\]
  \end{enumerate}
\end{hypA}
Under A\ref{assum:exp}, the quantity $Q_{T}(\param,\hat\param)$ defined by
\eqref{eq:QEM} becomes
\begin{equation}
\label{eq:QEM-exp}
Q_{T}(\param,\hat\param) = \phi(\param) + \pscal{\frac{1}{T}\sum_{t=1}^{T}\smoothfunc{\chi,0}{\hat\param,t,T}\left(S, \bfY\right)}{\psi(\param)}\eqsp,
\end{equation}
so that the definition of the function $\param \mapsto Q_{T}(\param,\hat\param)$
requires the computation of an expectation
$\frac{1}{T}\sum_{t=1}^{T}\smoothfunc{\chi,0}{\hat\param,t,T}\left(S,
  \bfY\right)$ independently of $\param$. 

The M-step of the batch EM iteration amounts to computing
\[
\bar\param \left(\frac{1}{T}\sum_{t=1}^{T}\smoothfunc{\chi,0}{\hat\param,t,T}\left(S, \bfY\right)\right)\eqsp.
\]

This batch EM algorithm is designed for a fixed set of observations. A natural
extension of this algorithm to the online context is to define a sequence of
parameter estimates by
\[
\param_{t+1} = \mathrm{argmax}_{\param} \ Q_{t+1}(\param, \param_t) \eqsp.
\]
Unfortunately, the computation of $Q_{t+1}(\param, \param_t)$ requires the
whole set of observations to be stored and scanned for each estimation. For
large data sets the computation cost of the E-step makes it intractable in this
case.  To overcome this difficulty, several online variants of the batch EM
algorithm have been proposed, based on a recursive approximation of the
function $\param \mapsto Q_{t+1}(\cdot,\param_t)$ (see
\cite{cappe:2009}, \cite{delmoral:doucet:singh:2010a} and \cite{lecorff:fort:2011}). In this paper, we focus on the Block Online EM
(BOEM) algorithm, see \cite{lecorff:fort:2011}.

\subsection{Particle Block Online EM (P-BOEM)}
\label{subsec:BOEM:description:PBOEM}
The BOEM algorithm, introduced in \cite{lecorff:fort:2011}, is an online variant of the
EM algorithm. The observations are processed sequentially per block and the
parameter estimate is updated at the end of each block.  Let $\{\tau_k\}_{k \geq 1}$ be a sequence of positive integers denoting the length
of the blocks and set 
\begin{equation}
\label{eq:timeupdate}
T_n\eqdef \sum_{k=1}^n \tau_k\quad\mbox{and}\quad T_0 \eqdef 0\eqsp;
\end{equation}
$\{T_k\}_{k \geq 1}$ are the deterministic times at which the parameter updates
occur.  Define, for all integers $\tau>0$ and $T\geq 0$ and all
$\param\in\paramset$,
\begin{equation}
  \label{eq:rewrite:barS}
  \bar S_{\tau}^{\chi,T}(\param, \bfY) \eqdef
\frac{1}{\tau} \sum_{t=T+1}^{T+\tau}\smoothfunc{\chi,T}{\param,t,T+\tau}\left(
  S, \bfY\right)\eqsp.
\end{equation}
The quantity $\bar S_{\tau}^{\chi,T}(\param, \bfY)$ corresponds to the
intermediate quantity in \eqref{eq:QEM-exp} with the observations
$\bfY_{T+1:T+\tau}$.

The BOEM algorithm iteratively defines a sequence of parameter estimates $\{\param_n\}_{n \geq
  0}$ as follows: given the current parameter estimate $\param_{n}$,
\begin{enumerate}[(i)]
\item \label{BOEM:stat} compute the quantity $\bar S_{\tau_{n+1}}^{\chi,T_{n}}(\param_{n}, \bfY)$,
\item \label{BOEM:param}compute a candidate for the new value of the parameter:
  $\param_{n+1} = \bar\param\left(\bar
    S_{\tau_{n+1}}^{\chi,T_{n}}(\param_{n}, \bfY) \right)$,
\end{enumerate}
To make the exposition easier, we assume that the initial distribution $\chi$
is the same on each block.  The dependence of $\bar S_{\tau}^{\chi,T}(\param, \bfY)$
on $\chi$ is thus dropped from the notation for better clarity.

The quantity $\bar S_{\tau_{n+1}}^{T_{n}}(\param_{n},
\bfY)$ is available in closed form only in the case of linear
Gaussian models and HMM with finite state-spaces. In HMM with general
state-spaces $\bar S_{\tau_{n+1}}^{T_{n}}(\param_{n}, \bfY)$ cannot be computed
explicitly and we propose to compute an approximation of $\bar
S_{\tau_{n+1}}^{T_{n}}(\param_{n}, \bfY)$ using SMC algorithms thus yielding
the {\em Particle-BOEM} (P-BOEM) algorithm. Different methods can be
used to compute these approximations (see e.g.
\cite{delmoral:doucet:singh:2010b}, \cite{delmoral:doucet:singh:2010a} and  \cite{douc:garivier:moulines:olsson:2010}). We will discuss in
Section~\ref{subsec:BOEM:description:SMC} below some SMC approximations that
use the data sequentially.

\begin{list}{}{}
\item Denote by $\widetilde S_{n}(\param, \bfY)$ the SMC
approximation of $\bar S_{\tau_{n+1}}^{T_{n}}(\param, \bfY)$ computed with $N_{n+1}$ particles. The P-BOEM 
algorithm iteratively defines a sequence of parameter estimates $\{\param_n\}_{n \geq
  0}$ as follows: given the current parameter estimate $\param_{n}$,
\begin{enumerate}[(i)]
\item \label{PBOEM:stat} compute the quantity $\widetilde S_{n}(\param_n, \bfY)$,
\item \label{PBOEM:param}compute a candidate for the new value of the parameter:
  \[
  \param_{n+1} = \bar\param\left(\widetilde S_{n}(\param_{n}, \bfY) \right)\eqsp.
    \]
\end{enumerate}
\end{list}
We give in Algorithm~\ref{alg:PBOEM} lines $1$ to $9$ an algorithmic
description of the P-BOEM algorithm. Note that the idea of processing the observations by blocks is proposed in \cite{mclachlan:ng:2003} to fit a normal mixture model. The incremental EM algorithm discussed in \cite{mclachlan:ng:2003} is an alternative to the batch EM algorithm for very large data sets. Contrary to our framework, in the algorithm proposed by \cite{mclachlan:ng:2003}, the number of observations is fixed and the same observations are scanned several times.

\subsection{Averaged Particle Block Online EM}
\label{subsec:BOEM:description:PBOEM:aver}
Following the same lines as in \cite{lecorff:fort:2011}, we propose to replace
the P-BOEM sequence $\{\param_n \}_{n \geq 0}$ by an {\em averaged} sequence.
This new sequence can be computed recursively, simultaneously with the P-BOEM
sequence, and does not require additional storage of the data. The proposed
averaged P-BOEM algorithm is defined as follows (see also lines $5$ and
$6$ of Algorithm~\ref{alg:PBOEM}): 
the step \eqref{PBOEM:param} of the P-BOEM algorithm presented above is followed by
\begin{enumerate}[(i)]
\setcounter{enumi}{3}
\item \label{ABOEM:stat} compute the quantity 
\begin{equation}
\label{eq:PBonem:averaged}
\widetilde\Sigma_{n+1} = \frac{T_{n}}{T_{n+1}}\widetilde\Sigma_{n} + \frac{\tau_{n+1}}{T_{n+1}}\widetilde S_{n}(\param_{n}, \bfY)\eqsp,
\end{equation}
\item \label{ABOEM:param} define
\begin{equation}
  \label{eq:abonem:recursion}
\widetilde \param_{n+1} \eqdef \bar\param \left(\widetilde\Sigma_{n+1}\right)\eqsp.
\end{equation}
\end{enumerate}
We set $\widetilde\Sigma_{0} =0$ so that 
\begin{equation}
  \label{eq:tildeSigma:bis}
  \widetilde\Sigma_{n} = \frac{1}{T_n}
\sum_{j=1}^n \tau_j \, \widetilde S_{j-1}(\param_{j-1}, \bfY) \eqsp;
\end{equation}
we will prove in Section~\ref{sec:averaging} that the rate of convergence of
the averaged sequence $\{ \widetilde \param_n\}_{n \geq 0}$, computed from the
averaged statistics $\{\widetilde\Sigma_{n}\}_{n \geq 0}$, is better than the
non-averaged one. We will also observe this property in
Section~\ref{sec:MCexperiments} by comparing the variability of the P-BOEM and
the averaged P-BOEM sequences in numerical applications. 

\begin{algorithm}[h]
\caption{P-BOEM and averaged P-BOEM}
\label{alg:PBOEM}
\begin{algorithmic}
\REQUIRE $\theta_{0}$, $\{\tau_{n}\}_{n\geq 1}$, $\{N_{n}\}_{n\geq 1}$, $\{\bfY_{t}\}_{t\geq 0}\eqsp.$
\ENSURE $\{\param_{n}\}_{n\geq 0}$ and $\{\widetilde\param_{n}\}_{n\geq 0}\eqsp.$
\STATE Set $\widetilde\Sigma_{0} = 0$.
\FOR{all $i\geq 0$}
\STATE Compute sequentially $\widetilde S_{i}(\param_{i}, \bfY)\eqsp.$
\STATE Set $\param_{i+1} = \bar\param \left(\widetilde S_{i}(\param_{i}, \bfY)\right)\eqsp.$
\STATE Set 
\[
\widetilde\Sigma_{i+1} = \frac{T_{i}}{T_{i+1}}\widetilde\Sigma_{i} + \frac{\tau_{i+1}}{T_{i+1}}\widetilde S_{i}(\param_{i}, \bfY)\eqsp.
\]
\STATE Set $\widetilde\param_{i+1} = \bar\param \left(\widetilde \Sigma_{i+1}\right)\eqsp.$
\ENDFOR
\end{algorithmic}
\end{algorithm}

\subsection{The SMC approximation step}
\label{subsec:BOEM:description:SMC}
As the P-BOEM algorithm is an online algorithm, the SMC algorithm should use the data
sequentially: no backward pass is allowed to browse all the data at the end of
the block. Hence, the approximation is computed recursively within each block,
each observation being used once and never stored.  These SMC algorithms will
be referred to as {\em forward only SMC}.  We detail below a forward only SMC
algorithm for the computation of $\widetilde S_{n}(\param_{n}, \bfY)$ which has been proposed by
\cite{cappe:2011} (see also~\cite{delmoral:doucet:singh:2010a}).

For notational convenience, the dependence on $n$ is omitted. For block $n$, the algorithm below has to be applied with $(\tau,N)
\leftarrow (\tau_{n+1}, N_{n+1})$, $Y_{1:\tau} \leftarrow
Y_{T_{n}+1,T_{n}+\tau_{n+1}}$ and $\theta \leftarrow \theta_{n}$. 

The key property is to observe that
\begin{equation}
  \label{eq:SMCapprox:tool1}
  \bar S_{\tau}^{0}(\param, \bfY) = \phi_\tau^\param(R_{\param,\tau})
\end{equation}
where $\phi_{t}^\param$ is the filtering distribution at time $t$, and
the functions $R_{t,\param}:\Xset \to \Sset$, $1 \leq t \leq \tau$, satisfy the equations 
\begin{equation}
  \label{eq:SMCapprox:tool2}
R_{t,\param}(x) = \frac{1}{t} \mathrm{B}_{t}^\param\left(x, S(\cdot,x,Y_t)\right) + \frac{t-1}{t}
\mathrm{B}_{t}^\param\left(x, R_{t-1,\param}\right) \eqsp,
\end{equation}
where $\mathrm{B}_{t}^\param$ denotes the backward smoothing kernel at time $t$
\begin{equation}
  \label{eq:SMCapprox:tool3}
\mathrm{B}_{t}^\param(x, \rmd x') = \frac{m_\param(x',x) }{\int m_\param(u,x)
  \phi_{t-1}^\param(\rmd u) }\phi_{t-1}^\param(\rmd x') \eqsp.
\end{equation}
By convention, $R_{0,\param}(x) = 0$ and $\phi_{0}^\param= \chi$. A proof of
the equalities (\ref{eq:SMCapprox:tool1}) to (\ref{eq:SMCapprox:tool3}) can be
found in \cite{cappe:2011} and \cite{delmoral:doucet:singh:2010a}. Therefore, a careful
reading of Eqs~(\ref{eq:SMCapprox:tool1}) to (\ref{eq:SMCapprox:tool3}) shows
that, for an iterative particle approximation of $\bar S_{\tau}^{0}(\param,
\bfY)$, it is sufficient to update from time $t-1$ to $t$
\begin{enumerate}[(i)]
\item $N$ weighted samples
  $\left\{\left(\epart{t}{\ell},\ewght{t}{\ell}\right);
    \ell\in\{1,\dots,N\}\right\}$ used to approximate the filtering distribution
  $\phi_{t}^\param$.
\item the intermediate quantities $\{R_{t,\param}^{\ell}\}_{\ell=1}^{N}$,
  approximating the function $R_{t,\param}$ at point $x = \epart{t}{\ell}$, $\ell \in
  \{1,\cdots, N\}$.
\end{enumerate}
We describe below such an algorithm. An algorithmic description is also
provided in Appendix~\ref{app:alg}, Algorithm~\ref{alg:FSMC}. 

Given {\em instrumental} Markov transition kernels $\{\kiss{t}{}(x,x'), t \leq
\tau \}$ on $\Xset\times\mathcal{X}$ and adjustment multipliers
$\{\adjfunc{t}{}{}, t\leq \tau \}$, the procedure goes as follows:
  \begin{enumerate}[(i)]
  \item {\em line $1$ in Algorithm~\ref{alg:FSMC}:} sample independently $N$
    particles $\{\epart{0}{\ell}\}_{\ell=1}^{N}$ with the same distribution
    $\chi$.
  \item {\em line $6$ in Algorithm~\ref{alg:FSMC}:} at each time step
    $t\in\{1,\dots,\tau\}$, pairs $\{ (J_t^{\ell}, \epart{t}{\ell}) \}_{\ell =
      1}^{N}$ of indices and particles are sampled independently (conditionally
    to $Y_{1:t}$, $\param$ and $\{ (J_{t-1}^{\ell}, \epart{t-1}{\ell})
    \}_{\ell = 1}^{N}$) from the instrumental distribution:
\begin{equation} \label{eq:instrumental-distribution-filtering}
    \instrpostaux{t}{}(i, \rmd x) \propto \ewght{t-1}{i} \adjfunc{t}{}{\epart{t-1}{i}} \kiss{t}{}(\epart{t-1}{i},x)\lambda(\rmd x) \eqsp,
\end{equation}
on the product space $\{1, \dots, N\} \times \Xset$. 
For any $t\in\{1,\dots,\tau\}$ and any $\ell\in\{1,\dots,N\}$, $J_t^{\ell}$
denotes the index of the selected particle at time $t-1$ used to produce
$\epart{t}{\ell}$.
\item {\em line $7$ in
Algorithm~\ref{alg:FSMC}:} once the new particles $\{\epart{t}{\ell}\}_{\ell = 1}^{N}$ have been sampled,
their importance weights  $\{ \ewght{t}{\ell}\}_{\ell = 1}^{N}$ are computed.
\item {\em lines $8$ in Algorithm~\ref{alg:FSMC}:} update the intermediate
  quantities $\{R_{t,\param}^{\ell}\}_{\ell=1}^{N}$.
\end{enumerate}
If, for all $x\in\Xset$, $\adjfunc{t}{}{x}=1$ and if the kernels $\kiss{t}{}$
are chosen such that $\kiss{t}{} = m_{\param}$, lines $6$-$7$ in
Algorithm~\ref{alg:FSMC} are known as the {\em Bootstrap filter}. Other choices
of $\kiss{t}{}{}$ and $\adjfunc{t}{}{}$ can be made, see e.g.
\cite{cappe:moulines:ryden:2005}.

\section{Applications to Bayesian inverse problems in Hidden Markov Models}
\label{sec:MCexperiments}

\subsection{Stochastic volatility model}
Consider the following stochastic volatility model:
\begin{equation*}
X_{t+1} = \phi X_t + \sigma U_{t}\eqsp, \qquad \qquad
Y_t = \beta \rme^{\frac{X_t}{2}} V_t\eqsp,
\end{equation*}
where $X_0\sim\mathcal{N}\left(0, (1-\phi^2)^{-1} \sigma^2\right)$ and
$\{U_t\}_{t\geq 0}$ and $\{V_t\}_{t\geq 0}$ are two sequences of i.i.d.  standard
Gaussian r.v., independent from $X_0$.

We illustrate the convergence of the P-BOEM algorithms and discuss the choice
of some design parameters such as the pair ($\tau_n, N_n$). Data are sampled
using $\phi = 0.95$, $\sigma^{2} = 0.1$ and $\beta^{2} = 0.6$; we estimate
$\param = (\phi, \sigma^2, \beta^2)$ by applying the P-BOEM algorithm and its
averaged version.  All runs are started from $\phi = 0.1$, $\sigma^{2} = 0.6$
and $\beta^{2} = 2$.

Figure~\ref{fig:boxplotSVM} displays the estimation of the three
parameters as a function of the number of observations, over $50$ independent Monte
Carlo runs. The block-size sequence is of the form $\tau_{n}\propto n^{1.2}$. For the SMC step, we choose $N_n = 0.25\cdot \tau_n$; particles are sampled as described in
Algorithm~\ref{alg:FSMC} (see Appendix~\ref{app:alg}) with the bootstrap filter. For each parameter, Figure~\ref{fig:boxplotSVM} displays the empirical median (bold line) and upper and lower quartiles (dotted line).
The averaging procedure is started after $1500$ observations. Both algorithms converge to the true values of the parameters and, once the averaging procedure is started, the variance of the estimation
decreases (estimation of $\phi$ and $\beta^{2}$). The estimation of $\sigma^{2}$ shows that, if the averaging procedure is started with too few observations, the estimation can be slowed down.

 \begin{figure}[!h]
  \centering
  \subfloat[Estimation of $\phi$.]{\includegraphics[width=0.5\textwidth]{./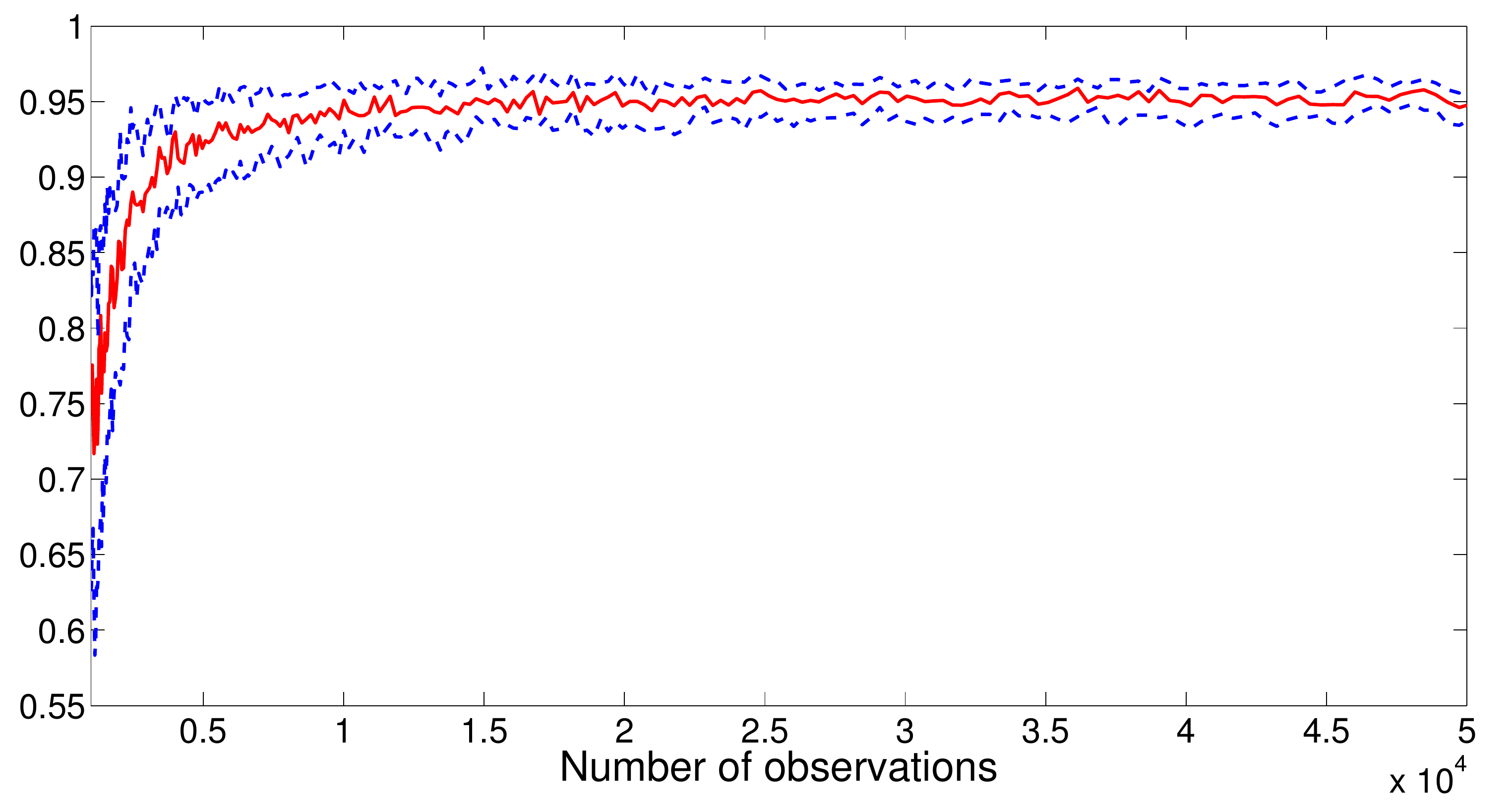}}
  \subfloat[Estimation of $\phi$.]{\includegraphics[width=0.5\textwidth]{./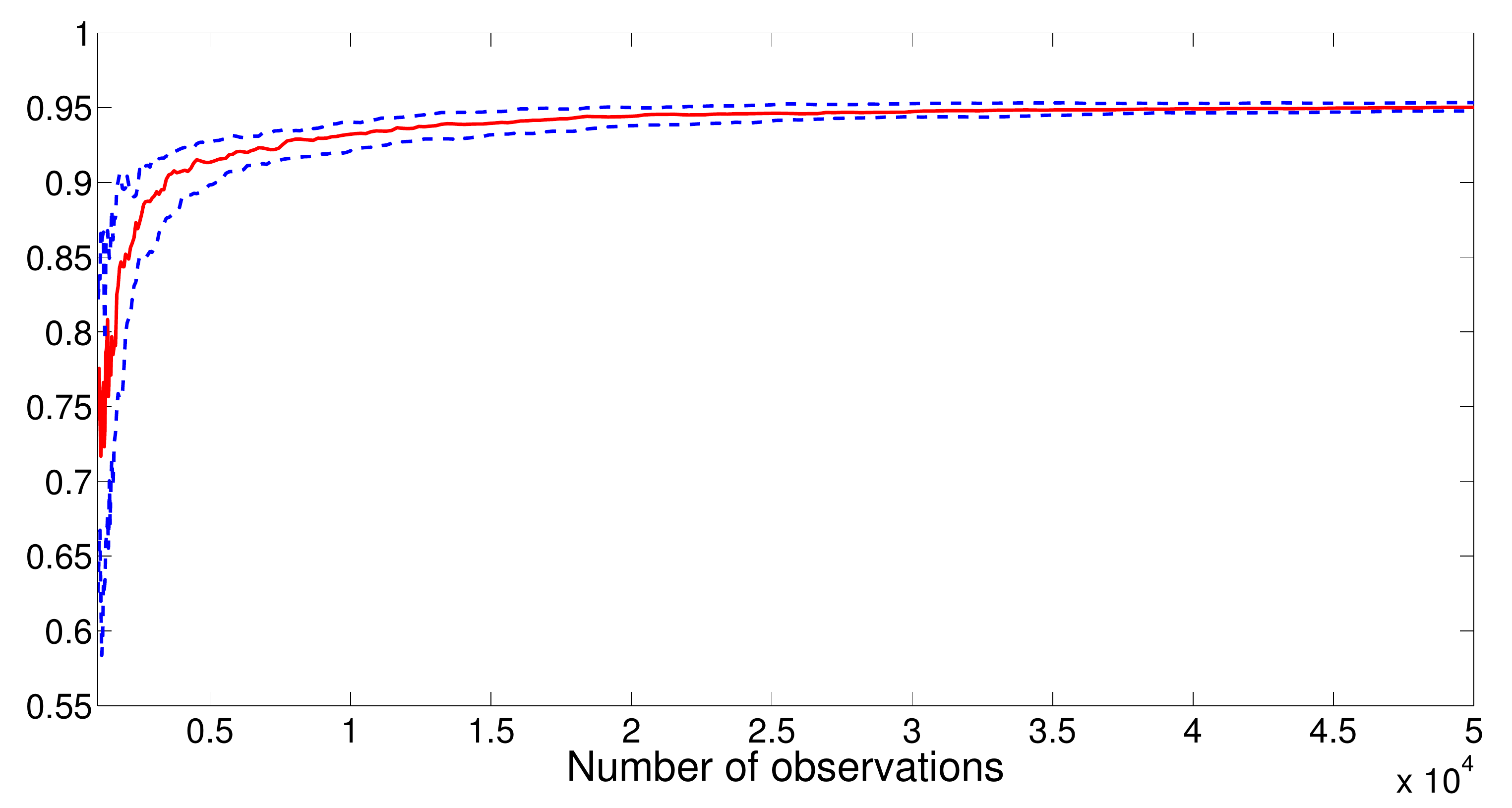}}\\
  \subfloat[Estimation of $\sigma^{2}$.]{\includegraphics[width=0.5\textwidth]{./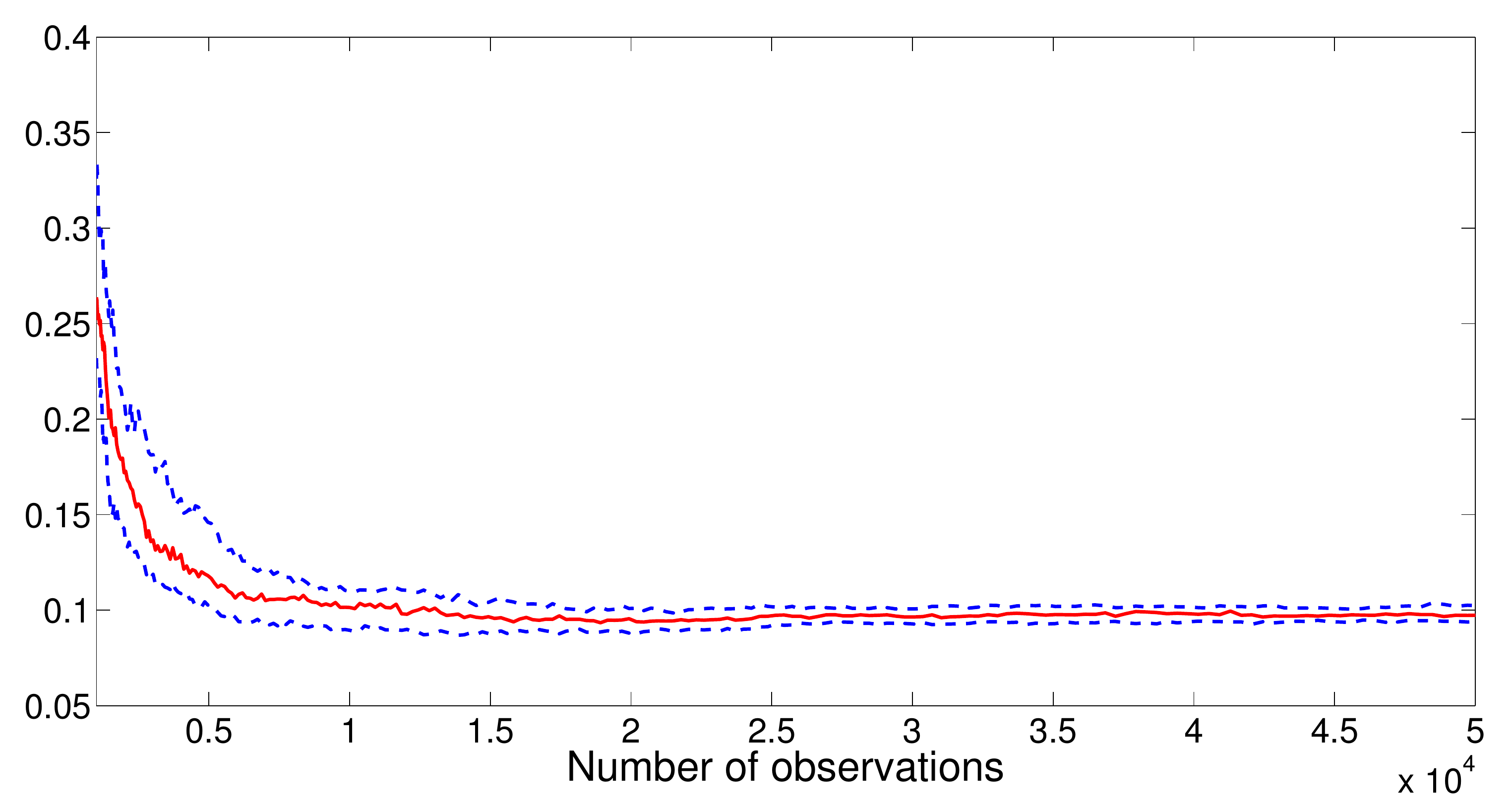}}
  \subfloat[Estimation of $\sigma^{2}$.]{\includegraphics[width=0.5\textwidth]{./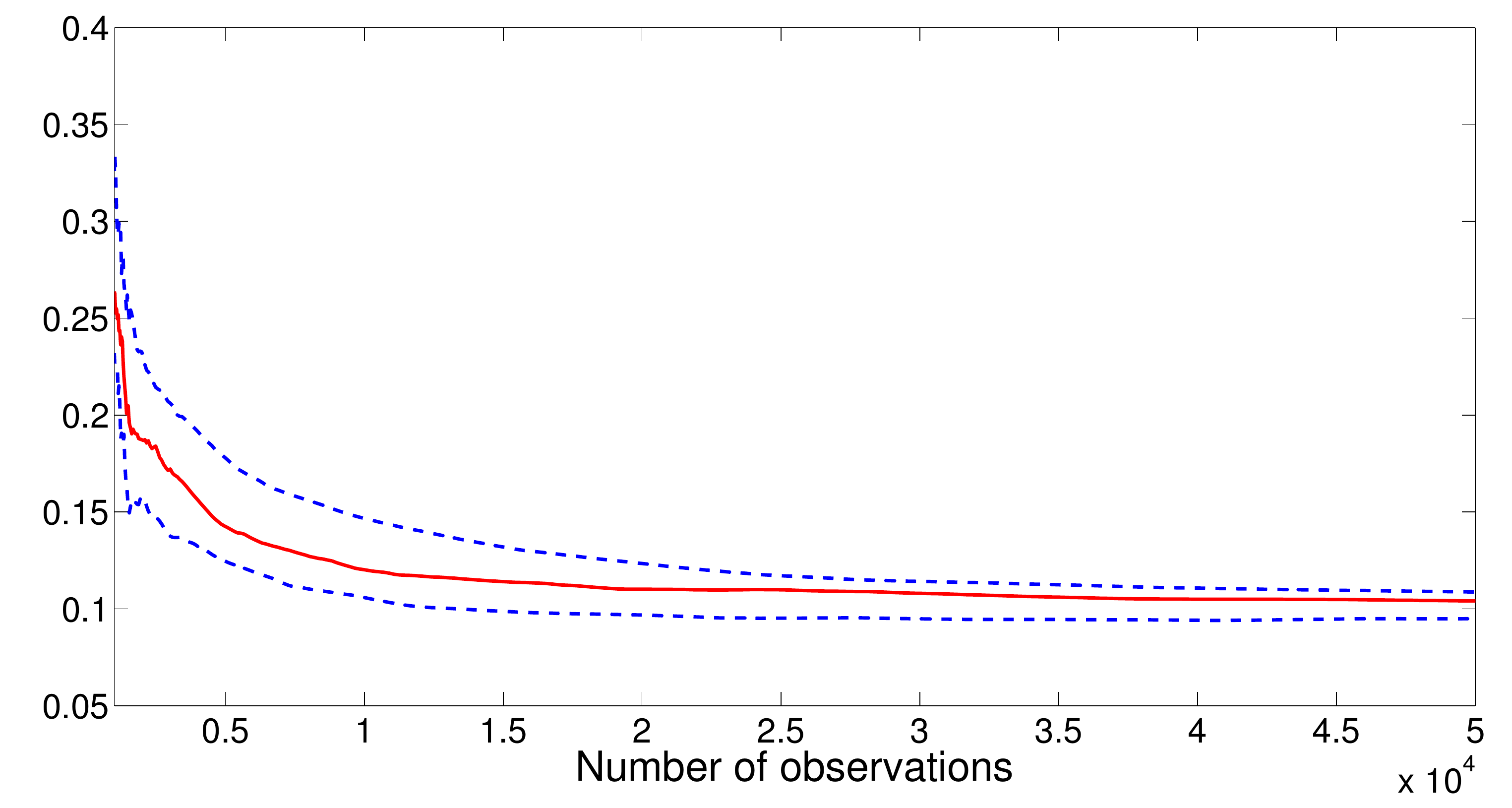}}\\
  \subfloat[Estimation of $\beta^{2}$.]{\includegraphics[width=0.5\textwidth]{./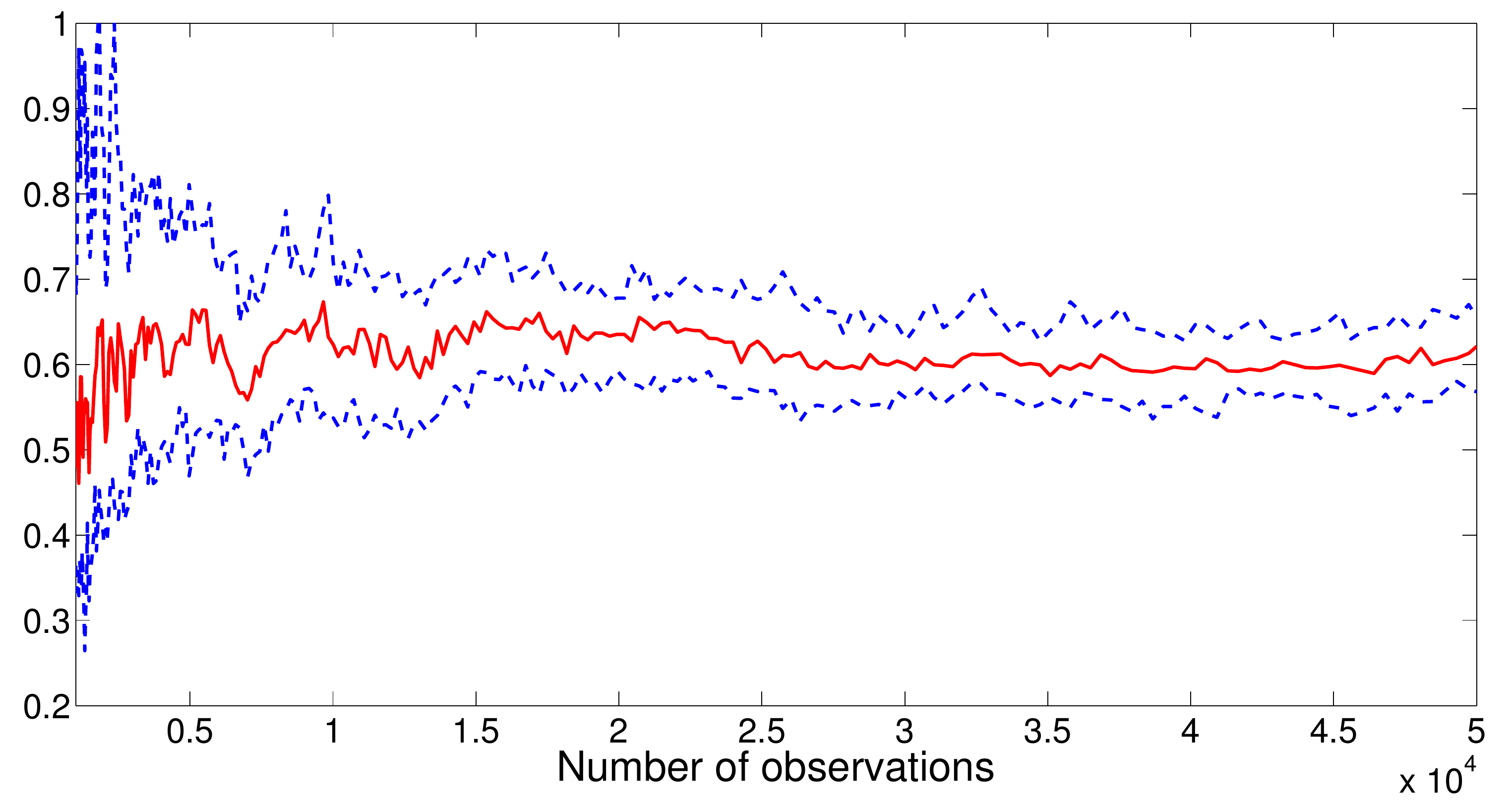}}
  \subfloat[Estimation of $\beta^{2}$.]{\includegraphics[width=0.5\textwidth]{./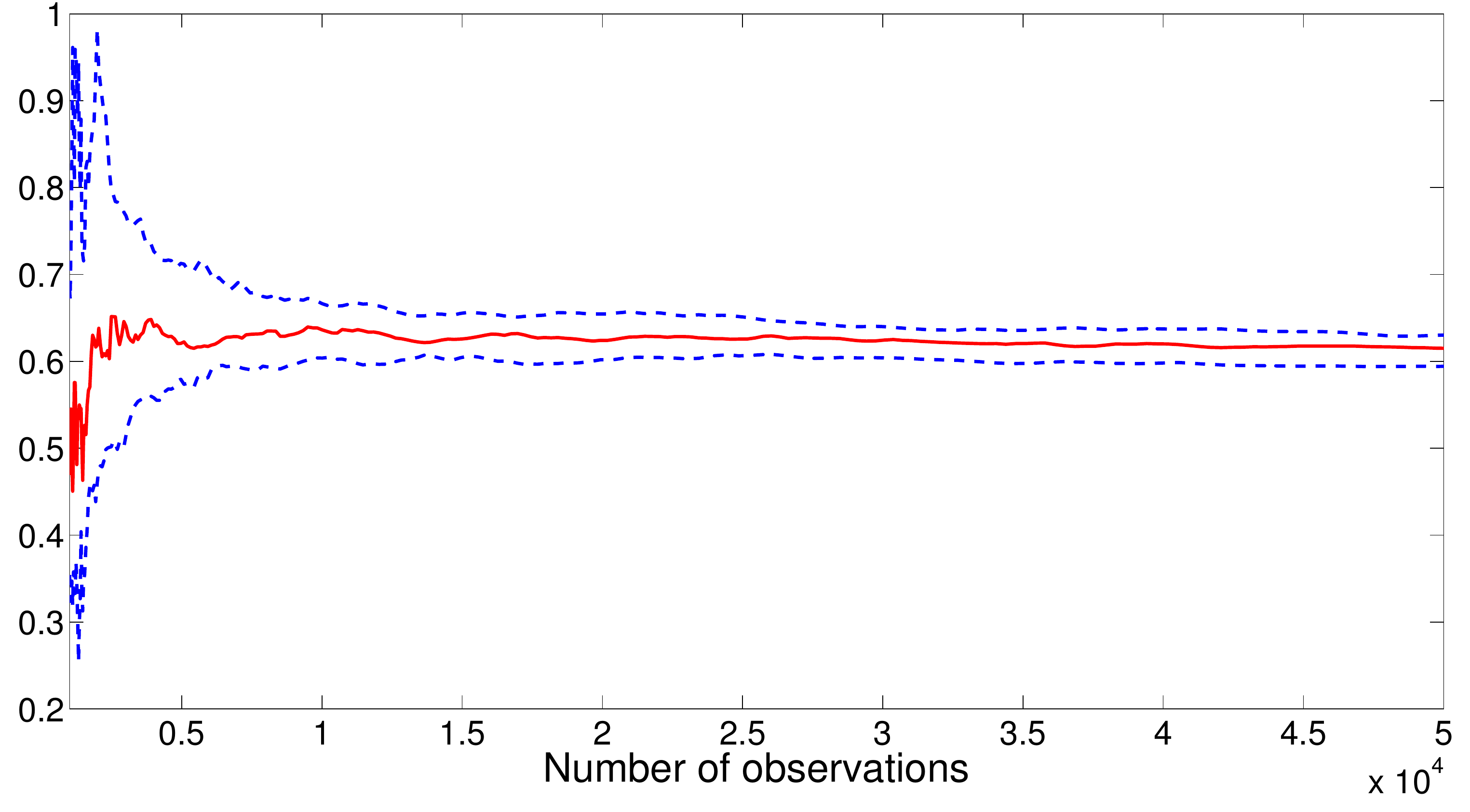}}\\
   \caption{Estimation of $\phi$, $\sigma^{2}$ and $\beta^{2}$ without (left) and with (right) averaging. Each graph represents the empirical median (bold line) and upper and lower quartiles (dotted line) over $50$ independent Monte Carlo runs. The averaging procedure is started after $1500$ observations. The first $1000$ observations are not displayed for better clarity.}
   \label{fig:boxplotSVM}
\end{figure}

We now discuss the role of the pairs $(\tau_n, N_n)$.  Roughly speaking (see
section~\ref{sec:convergence} for a rigorous decomposition), $\tau$ controls
the rate of convergence of $\bar S_{\tau}^{T}(\param, \bfY)$ to $\lim_{\tau \to
  \infty} \bar S_{\tau}^{T}(\param, \bfY) $; and $N$ controls the error between
$\bar S_{\tau}^{T}(\param, \bfY)$ and its SMC approximation.  We will show in Section~\ref{sec:convergence}
that $\lim_n \tau_n = \lim_n N_n = +\infty$ are part of some sufficient
conditions for the P-BOEM algorithms to converge. We thus choose increasing
sequences $\{\tau_n, N_n \}_{n \geq 1}$.  The role of $\tau_n$ has been
illustrated in \cite[Section $3$]{lecorff:fort:2011}. Hence, in this illustration, we fix
$\tau_n$ and discuss the role of $N_n$.  Figure~\ref{fig:varSVM} compares the
algorithms when applied with $\tau_n \propto n^{1.1}$ and $N_{n} =
\sqrt{\tau_{n}}$ or $N_{n}=\tau_{n}$.  The empirical variance (over $50$
independent Monte Carlo runs) of the estimation of $\beta^{2}$ is displayed, as
a function of the number of blocks.  First, Figure~\ref{fig:varSVM} illustrates the  variance decrease provided by the averaged procedure, whatever the block size sequence. Moreover, increasing the number of particles per block improves the variance of the estimation given by the P-BOEM algorithm while the impact on the variance of the averaged estimation is less important. On average, the variance is reduced by a factor of $3.0$ for the P-BOEM algorithm and by a factor of $1.8$ for its averaged version when the number of particles goes from $N_{n}=\sqrt{\tau_{n}}$ to $N_{n}=\tau_{n}$. These practical considerations illustrate the theoretical results derived in Section~\ref{sec:averaging}.

 \begin{figure}[!h]
  \centering
  \subfloat[{\bf P-BOEM}: empirical variance of the estimation of $\beta^{2}$ with $N_{n}=\sqrt{\tau_{n}}$ (dashed line) and $N_{n}=\tau_{n}$ (bold line).]{\includegraphics[width=0.75\textwidth]{./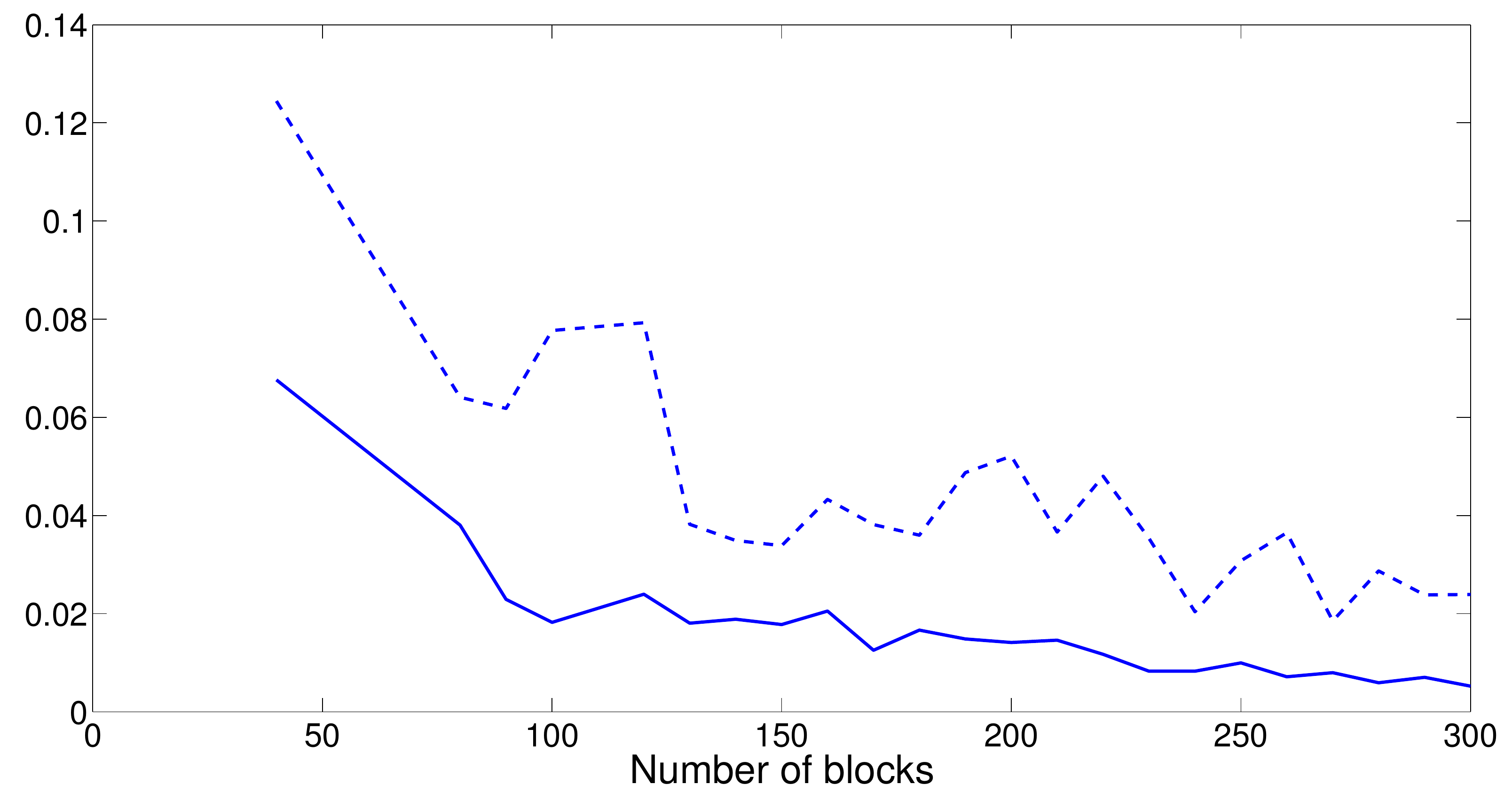}}\\
  \subfloat[{\bf Averaged P-BOEM}: empirical variance of the estimation of $\beta^{2}$ with $N_{n}=\sqrt{\tau_{n}}$ (dashed line) and $N_{n}=\tau_{n}$ (bold line).]{\includegraphics[width=0.75\textwidth]{./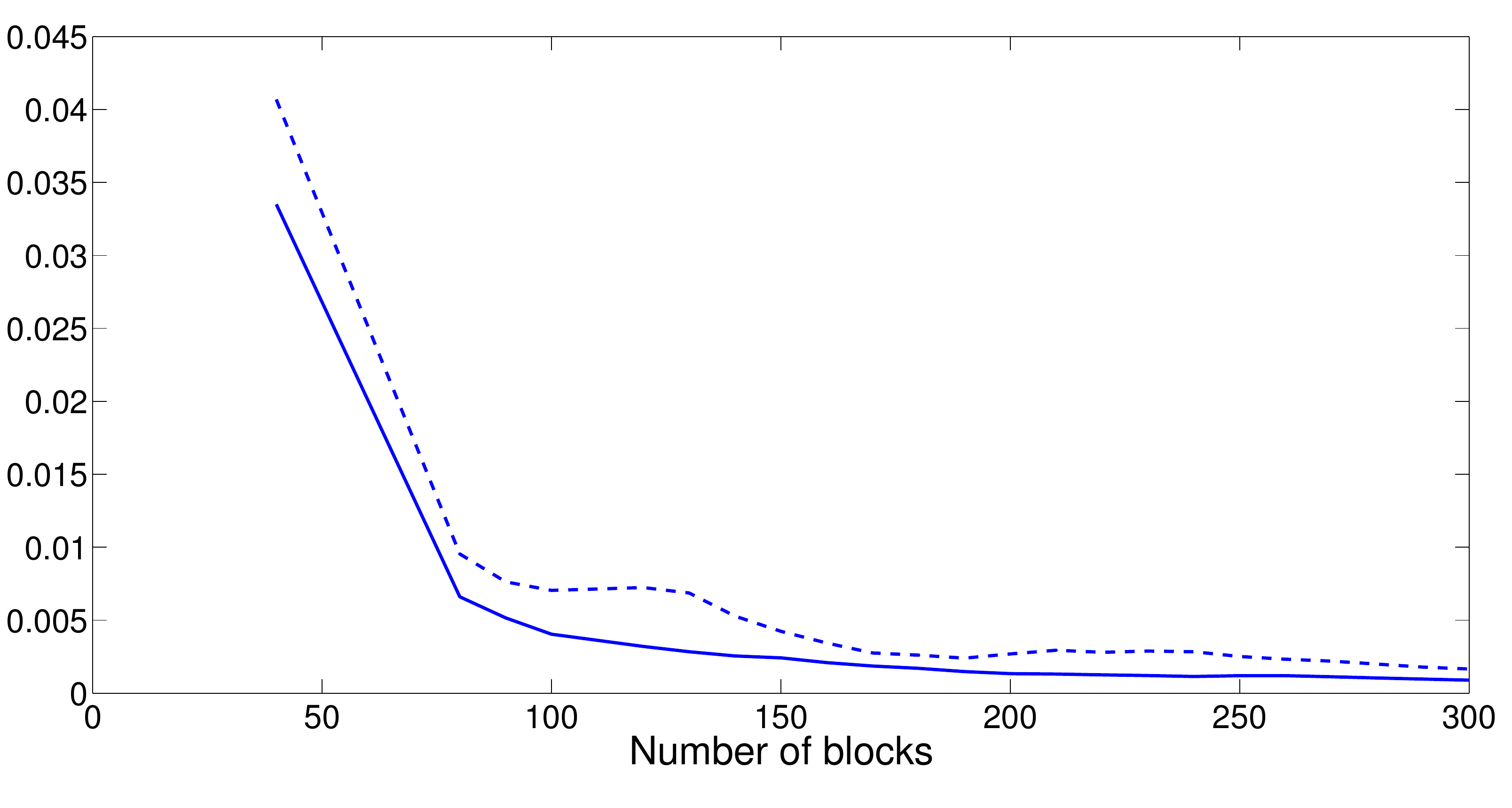}}
   \caption{Empirical variance of the estimation of $\beta^{2}$ with the P-BOEM algorithm (top) and its averaged version (bottom). The averaging procedure is started after the $25$-th block and the variance is displayed after a burn-in time of $35$ blocks.}
   \label{fig:varSVM}
\end{figure} 

Finally, we discuss the role of the initial distribution $\chi$. In all the applications above, we have the same distribution $\chi\equiv\mathcal{N}\left(0, (1-\phi^2)^{-1} \sigma^2\right)$ at the beginning of each block. We could choose a different distribution $\chi_n$ for each block such as, e.g., the filtering distribution at the end of the previous block. We have observed that this particular choice of $\chi_{n}$ leads to the same behavior for both algorithms.

To end this section, the P-BOEM algorithm is compared to the Online EM algorithm outlined in~\cite{cappe:2011} and \cite{delmoral:doucet:singh:2010a}.  These algorithms rely on  a combination of stochastic approximation and SMC methods. According to classical results on stochastic approximation, it is expected that the rate of convergence of the Online EM algorithm behaves like $\gamma_n^{1/2}$, where $\{\gamma_n\}_{n\geq 0}$ is the so called step-size sequence. Hence, $\gamma_{n}$ in the Online EM algorithm is chosen such that $\gamma_n \propto n^{-0.55}$ and the block-size sequence in the P-BOEM algorithm such that  $\tau_n \propto n^{1.2}$.  The number of particles used in the Online EM algorithm is fixed and chosen so that the computational costs of both algorithms are similar. Provided that  $N_{n}\propto \tau_{n}$ in the P-BOEM algorithm, this leads to a choice of $70$ particles for the Online EM algorithm.  We report in Figure~\ref{fig:boxplotSVM_BOEM-OEM}, the estimation of $\phi$ and  $\sigma^{2}$ for a Polyak-Ruppert  averaged Online EM algorithm (see \cite{polyak:1990}) and the averaged P-BOEM algorithm as a function of the number of observations.  The averaging procedure is started after about $1500$ observations.  As noted in \cite[Section $3$]{lecorff:fort:2011} for a constant sequence $\{N_{n}\}_{n\geq 0}$ this figure shows that both algorithms
behave similarly. For the estimation of $\phi$ and $\beta^{2}$, the variance is smaller for the P-BOEM algorithm and the convergence is faster for the P-BOEM algorithm in the case of $\beta^{2}$. Conclusions are different for the estimation of $\sigma^{2}$: the variance is smaller for the P-BOEM  algorithm but the Online EM algorithm converges a bit faster. The main advantage of the P-BOEM algorithm is that it relies on approximations which can be controlled in such a way that we are able to show that the limiting points of the P-BOEM algorithms are the stationary points of the limiting normalized log-likelihood of the observations. 

 \begin{figure}[!h]
  \centering
  \subfloat[Estimation of $\phi$.]{\includegraphics[width=0.6\textwidth]{./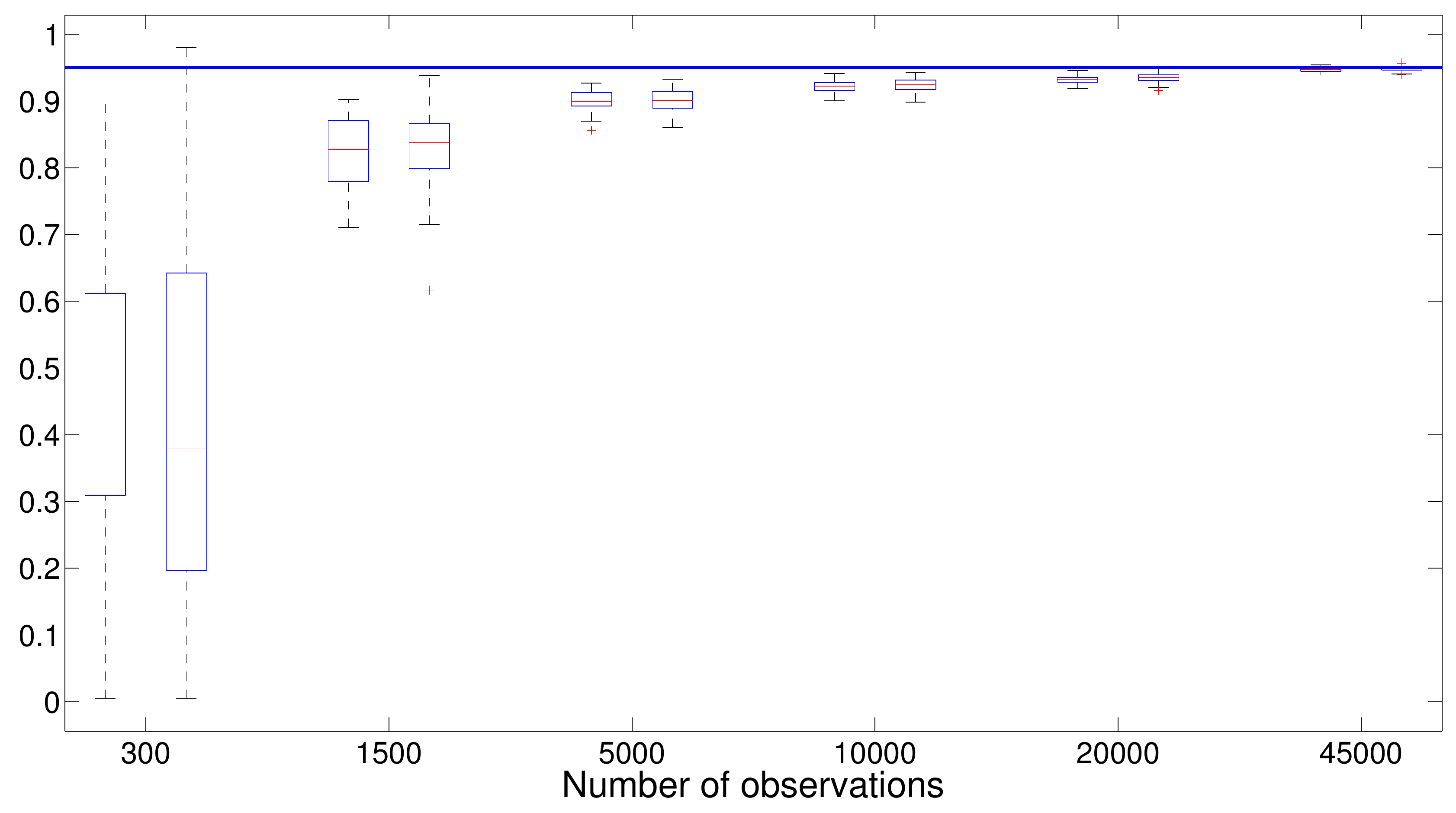}}\\
  \subfloat[Estimation of $\sigma^{2}$.]{\includegraphics[width=0.6\textwidth]{./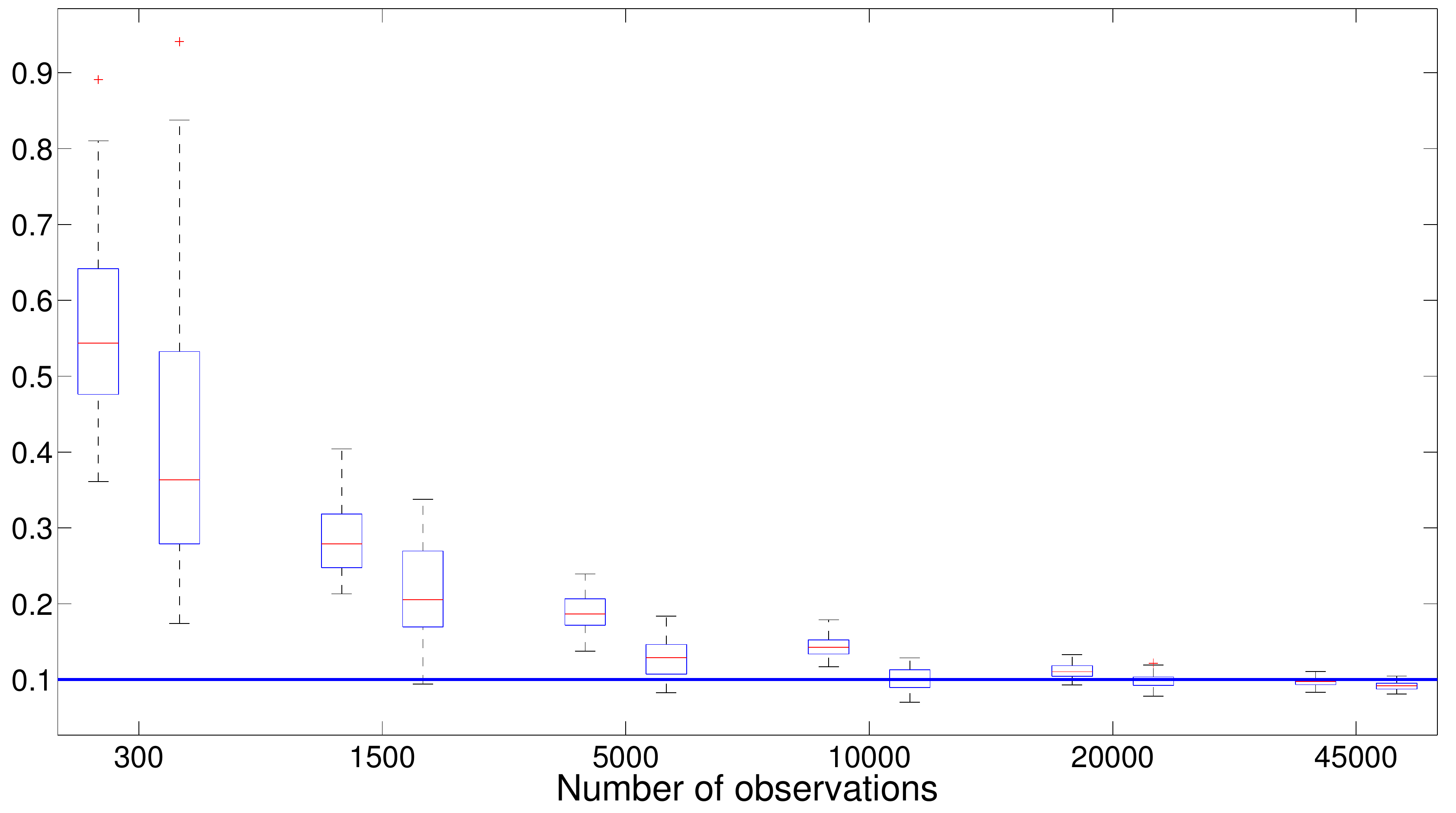}}\\
  \subfloat[Estimation of $\beta^{2}$.]{\includegraphics[width=0.6\textwidth]{./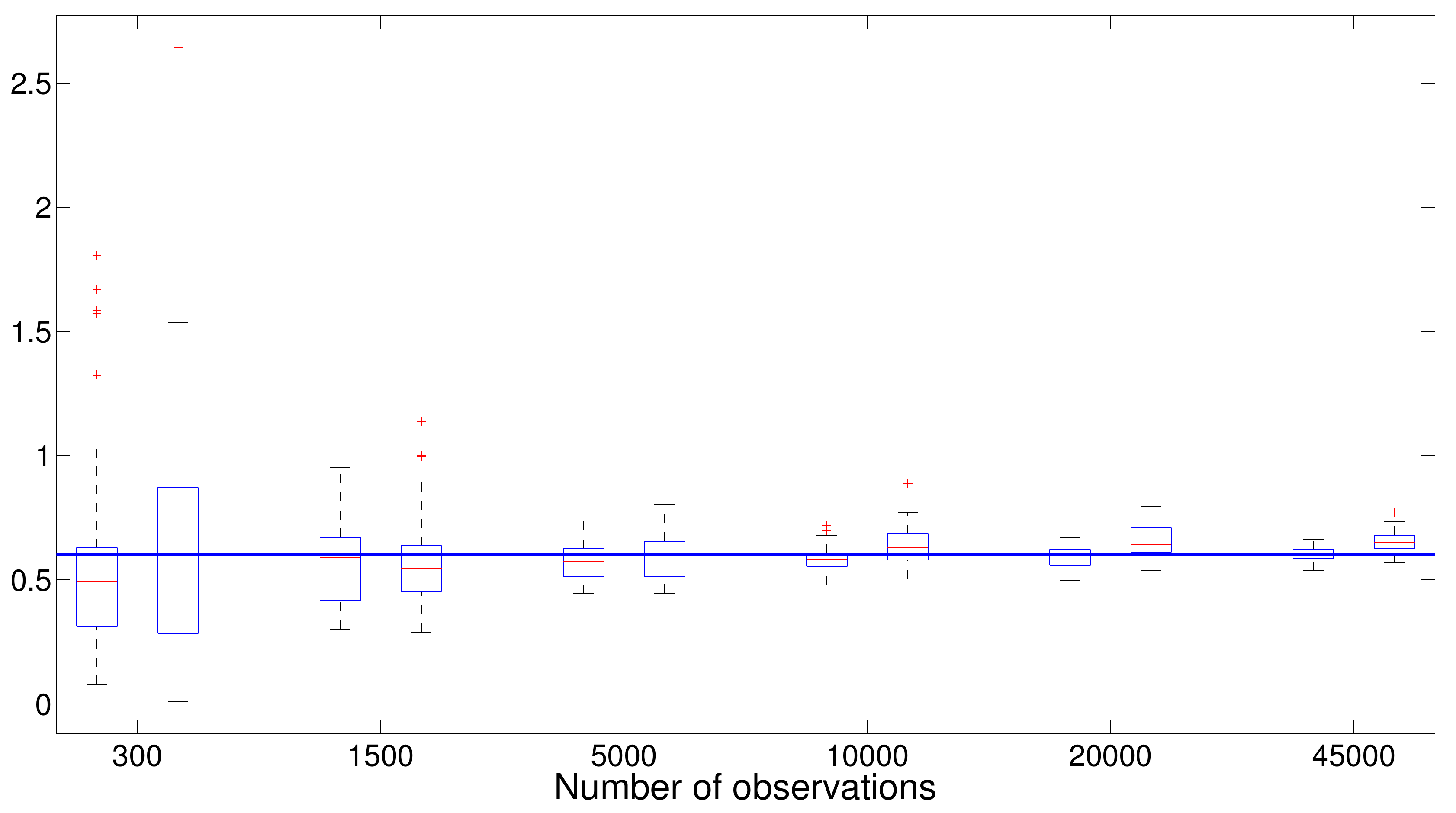}}
   \caption{Estimation of $\phi$, $\sigma^{2}$ and $\beta^{2}$ with the averaged P-BOEM algorithm (left)  and a Polyak-Ruppert averaged version of the Online EM algorithm (right) after $300, 1500, 5000, 10000, 20000$ and $45000$ observations. The averaging procedure is started after about $1000$ observations (which corresponds to the $25$-th block for the P-BOEM algorithm).}
   \label{fig:boxplotSVM_BOEM-OEM}
\end{figure}
\subsection{Simultaneous Localization And Mapping}
The Simultaneous Localization And Mapping (SLAM) problem arises when a mobile
device wants to build a map of an unknown environment and, at the same time,
has to estimate its position in this map.  The common statistical approach for
the SLAM problem is to introduce a state-space model.  Many solutions have been proposed
depending on the assumptions made on the transition and observation models, and
on the map (see e.g.
\cite{burgard:fox:thrun:2006}, \cite{martinezcantin:2008} and \cite{montemerlo:2003}).  In
\cite{martinezcantin:2008} and \cite{lecorff:fort:moulines:2011}, it is proposed to see
the SLAM as an inference problem in HMM: the localization of the robot is the
hidden state with Markovian dynamic, and the map is seen as an unknown
parameter. Therefore, the mapping problem is answered by solving the inference
task, and the localization problem is answered by approximating the
conditional distribution of the hidden states given the observations.

In this application, we consider a statistical model for a landmark-based SLAM
problem for a bicycle manoeuvring on a plane surface.

Let $x_t \eqdef \{x_{t,i}\}_{i=1}^{3}$ be the robot position, where $x_{t,1}$ and
$x_{t,2}$ are the robot's cartesian coordinates and $x_{t,3}$ its orientation.
At each time step, deterministic controls are sent to the robot so that it
explores a given part of the environment. Controls are denoted by $(
v_t,\psi_t)$ where $\psi_t$ stands for the robot's heading direction and $v_t$
its velocity. The robot position at time $t$, given its previous position at time $t-1$
and the noisy controls $(\hat{v}_{t},\hat{\psi}_{t})$, can be written as
\begin{equation}\label{eq-transitionmodel}
x_t   =  f(x_{t-1},\hat{v}_{t},\hat{\psi}_{t})\eqsp,
\end{equation}
where $(\hat{v}_{t},\hat{\psi}_{t})$ is a $2$-dimensional Gaussian distribution
with mean $( v_t,\psi_t)$ and known covariance matrix $Q$. In this
contribution we use the kinematic model of the front wheel of a bicycle (see
e.g.  \cite{bailey:2006}) where the function $f$ in \eqref{eq-transitionmodel}
is given by
\[
f(x_{t-1},\hat{v}_{t},\hat{\psi}_{t}) = x_{t-1} + \begin{pmatrix} \hat{v}_{t}d_{t}\cos(x_{t-1,3} + \hat{\psi}_{t})\\\ \hat{v}_{t}d_{t}\sin(x_{t-1,3} + \hat{\psi}_{t})\\ \hat{v}_{t}d_{t}B^{-1}\sin(\hat{\psi}_{t})\end{pmatrix}\eqsp,
\]
where $d_{t}$ is the time period between two successive positions and $B$ is the robot wheelbase.

The $2$-dimensional environment is represented by a set of landmarks $\theta
\eqdef \{\theta_{j}\}_{ 1 \leq j \leq q}$, $\theta_{j} \in \mathbb{C}$ being
the position of the $j-$th landmark.  The total number of landmarks $q$ and the
association between observations and landmarks are assumed to be known.

At time $t$, the robot observes the distance and the angular position of all
landmarks in its neighborhood; let $c_t \subseteq \{1, \cdots, q\}$ be the set
of observed landmarks at time $t$. It is assumed that the observations
$\{y_{t,i}\}_{i \in c_t}$ are independent and satisfy
\begin{equation*}
y_{t,i} = h(x_t,\theta_{i})+\delta_{t,i}\eqsp,
\end{equation*} 
where $h$ is defined by
\begin{equation*}
  h(x,\boldsymbol{\kappa}) \eqdef \begin{pmatrix}\sqrt{(\boldsymbol{\kappa}_1-x_1)^2 + (\boldsymbol{\kappa}_2-x_2)^2
  }\\\arctan{\frac{\boldsymbol{\kappa}_2-x_2}{\boldsymbol{\kappa}_1-x_1}}-x_3\end{pmatrix}\eqsp,
\end{equation*}
and the noise vectors $\{\delta_{t,i}\}_{t,i}$ are i.i.d Gaussian
$\mathcal{N}\left(0,R\right)$. $R$ is assumed to be known.

The model presented in this Section does not take into account all the issues arising in the SLAM problem (such as the association process which is assumed to be known and the known covariance matrices). The aim is to prove that the BOEM algorithm and its averaged version have satisfying behavior even in the challenging framework described above. The observation and motion models are highly nonlinear and we show that the BOEM algorithm remains stable in this experiment. Several solutions have been proposed to solve the association problem (see e.g. \cite{burgard:fox:thrun:2006} for a solution based on the likelihood of the observations) and could be adapted to our case.
We want to estimate $\param = \{\param_j\}_{j =1}^q$ by applying the P-BOEM
algorithms. In this paper, we use simulated data. $q=15$ landmarks
are drawn in a square of size $45m\mathrm{x}45m$. The robot path is sampled with a
given set of controls. Using the {\em true} positions of all landmarks in the
map and the true path of the robot (see the dots and the
bold line on Figure~\ref{fig:BOEMSLAM}), observations are sampled by setting: $R = \begin{pmatrix}\sigma_r^2 & \rho\\
  \rho & \sigma_b^2\end{pmatrix}\eqsp,$ where $\sigma_r = 0.5\mbox{m}$,
$\sigma_b = \frac{\pi}{60}\mbox{rad}$ and $\rho = 0.01$. We choose $Q =
\mbox{diag}(\sigma_v^2, \sigma_{\phi}^2)$ where $\sigma_v = 0.5\mbox{m.s}^{-1}$, $\sigma_{\psi} = \frac{\pi}{60}\mbox{rad}$ and $B=1.5\mathrm{m}$.

In this model, the transition denoted by $m_{\param}$ does not depend on the
map $\param$ (see \eqref{eq-transitionmodel}) and the marginal likelihood
$g_{\param}$ is such that the complete data likelihood does not belong to the curved exponential family:
\begin{equation}
  \label{eq:expression:logg}
 \sum_{i\in c_{t}} \ln g_{\param}(x_{t},y_{t,i})\propto \sum_{i\in c_{t}}
\left[y_{t,i}-h(x_{t},\param_{i})\right]^{\star}R^{-1}\left[y_{t,i}-h(x_{t},\param_{i})\right]
\eqsp.
\end{equation}
Hence, in order to apply Algorithm~\ref{alg:PBOEM}, at the beginning of each
block, $g_{\param}$ is approximated by a function depending on the current
parameter estimate so that the resulting approximated model belongs to the
curved exponential family (see \cite{lecorff:fort:moulines:2011}).  As can be seen
from (\ref{eq:expression:logg}), approximating the function $\boldsymbol{\kappa}\mapsto
h(x,\boldsymbol{\kappa})$ by its first-order Taylor expansion at $\param_{i}$ leads to a
quadratic approximation of $g_{\param}$. This approach is commonly used in the
SLAM framework to use the properties of linear Gaussian models (see e.g.
\cite{burgard:fox:thrun:2006}). 

As the landmarks are not observed all the time, we choose a slowly increasing
sequence $\{\tau_{n}\propto n^{1.1}\}_{n\geq 1}$ so that the number of updates
is not too small (in this experiment, we have $60$ updates for a total number
of observations of $2000$). As the total number of observations is not so large
(the largest block is of length $60$), the number of particles is chosen to be
constant on each block: for all $n\geq 1$, $N_{n} = 50$. For the SMC step, we
apply Algorithm~\ref{alg:FSMC} with the {\em bootstrap filter}.

For each run the estimated path (equal to the weighted mean of the particles)
and the estimated map at the end of the loop ($T=2000$) are stored.
Figure~\ref{fig:BOEMSLAM} represents the mean estimated path and the mean map
over $50$ independent Monte Carlo runs. It highlights the good performance of
the P-BOEM algorithm in a more complex framework.

 \begin{figure}[!h]
  \centering
  \includegraphics[width=0.6\textwidth]{./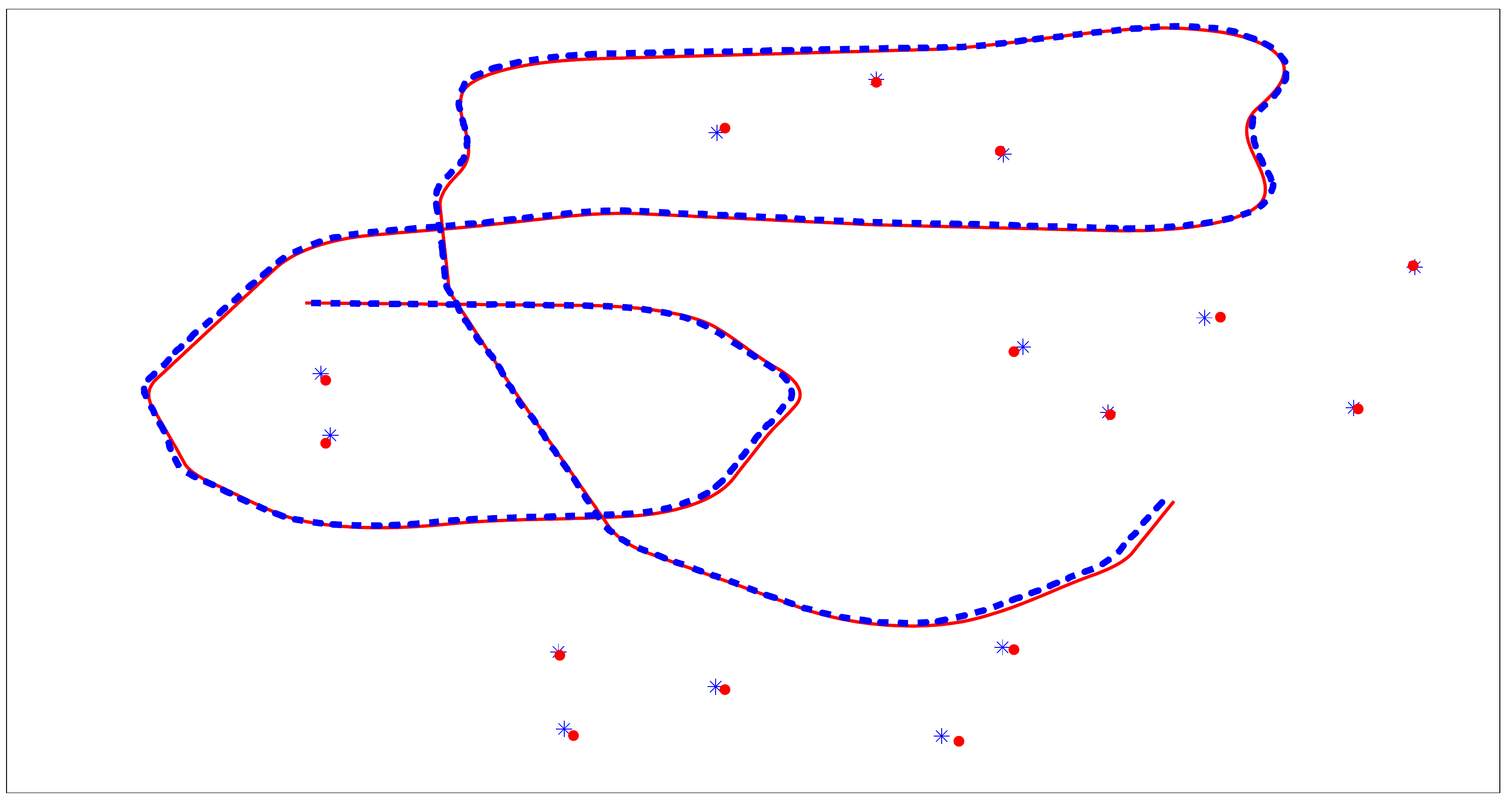}
   \caption{True trajectory (bold line) and true landmark positions (balls) with the estimated path (dotted line) and the landmarks' estimated positions (stars) at the end of the run ($T=2000$).}
   \label{fig:BOEMSLAM}
\end{figure} 

We also compare our algorithm to the {\em marginal SLAM} algorithm proposed by
\cite{martinezcantin:2008}. In this algorithm, the map is also modeled as a
parameter to learn in a HMM model; SMC methods are used to estimate the map in
the maximum likelihood sense.  The Marginal SLAM algorithm is a gradient-based approach for solving the recursive maximum likelihood procedure. Note that, in the case of i.i.d. observations,
\cite{titterington:1984} proposed to update the parameter estimate each time a new
observation is available using a stochastic gradient approach. Figure~\ref{fig:BOEM_Marginal_SLAM} illustrates the estimation of the position
of each landmark. The P-BOEM algorithm is applied using the same parameters as above and the
{\em marginal SLAM} algorithm uses a sequence of step-size $\{\gamma_{n}\propto
n^{-0.6}\}_{n\geq 1}$. We use the averaged version of the P-BOEM algorithm and a
Polyak-Ruppert based averaging procedure for the {\em marginal SLAM} algorithm (see
\cite{polyak:1990}).  For each landmark the last estimation (at the end of the
loop) of the position is stored for each of the $50$ independent Monte Carlo
runs.  Figure~\ref{fig:BOEM_Marginal_SLAM} displays the distance between the
estimated position and the true position for each landmark. In this experiment,
the P-BOEM based SLAM algorithm outperforms the {\em marginal SLAM} algorithm.

\begin{figure}[!h]
  \centering
  \includegraphics[width=0.6\textwidth]{./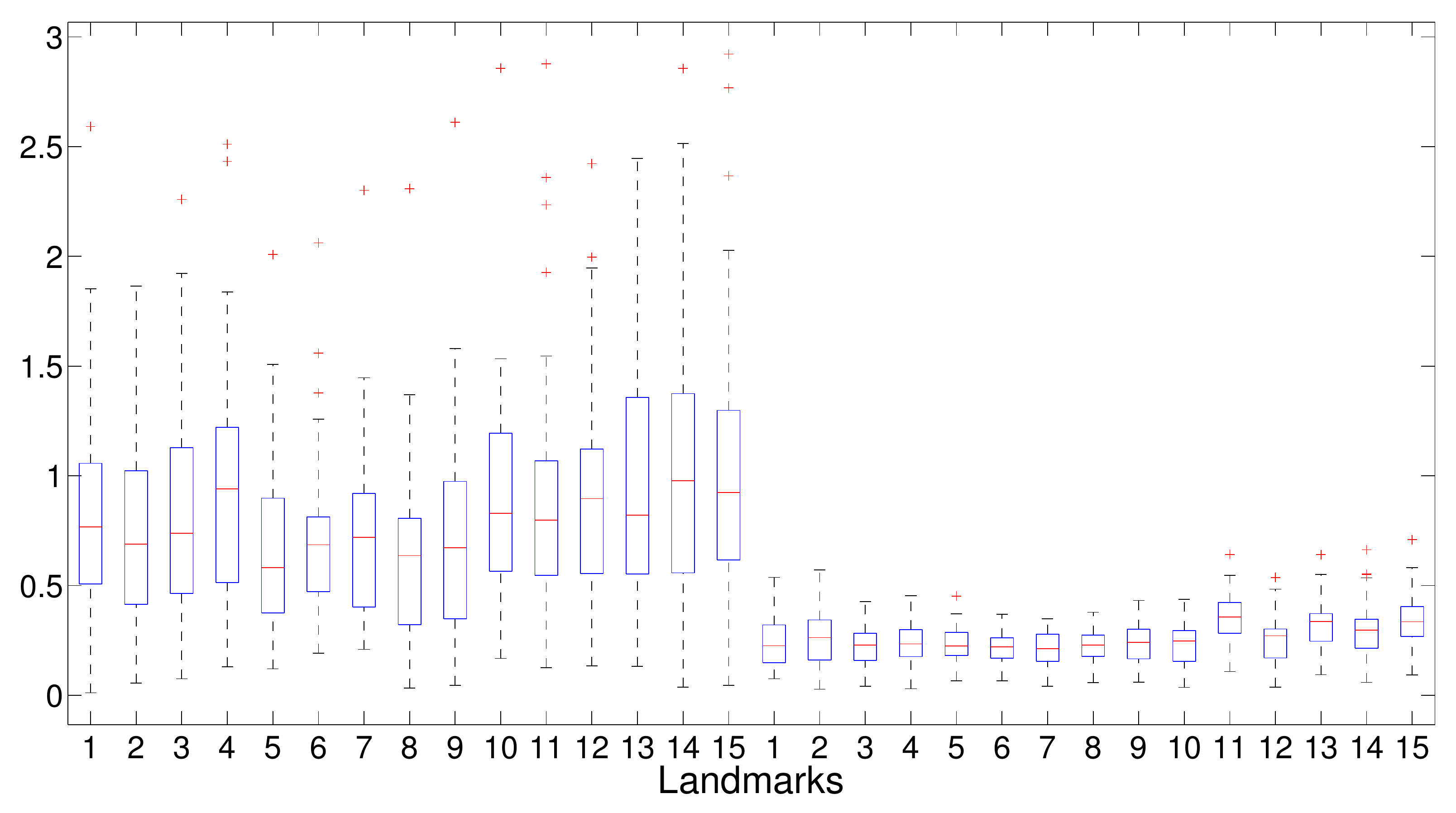}
   \caption{Distance between the final estimation and the true position for each of the $15$ landmarks with the averaged {\em marginal SLAM} algorithm (left) and the averaged P-BOEM algorithm (right).}
   \label{fig:BOEM_Marginal_SLAM}
\end{figure} 

\section{Convergence of the Particle Block Online EM algorithms}
\label{sec:convergence}
In this section, we analyze the limiting points of the P-BOEM algorithm.  We
prove in Theorem~\ref{th:Pbonem:conv} that the P-BOEM algorithm has the same limit points as
a so-called {\em limiting EM} algorithm, which defines a
sequence $\{\param_n\}_{n \geq 0}$ by $\param_{n+1} = \bar \param \left[\bar
S(\param_n)\right]$ where $\bar S(\param)$ is the a.s. limit $\lim_{\tau \to +\infty}
\bar S_{\tau}^{T}(\param, \bfY)$ (defined by (\ref{eq:rewrite:barS})). As
discussed in \cite[Section 4.3.]{lecorff:fort:2011}, the set of limit
points of the limiting EM algorithm is the set of stationary points of the contrast
function $\ell(\param)$, defined as the a.s. limit of the normalized
log-likelihood of the observations, when $T\to +\infty$.
The convergence result below on the P-BOEM algorithm requires two sets of assumptions: conditions
A\ref{assum:strong} to A\ref{assum:block-size} are the same as in \cite{lecorff:fort:2011} and
imply the convergence of the BOEM algorithm; assumptions A\ref{assum:dub:lec} and
A\ref{assum:moment:w} are introduced to control the Monte Carlo error.

\subsection{Assumptions}
Consider the following assumptions
\begin{hypA}\label{assum:strong}
  There exist $\sigma_{-}$ and $\sigma_{+}$ s.t. for any
  $\left(x,x^{\prime}\right)\in\Xset^2$ and any $\param \in \paramset$, $0
  <\sigma_{-} \leq m_\param(x,x^{\prime})\leq \sigma_{+}$.  Set $\rho \eqdef 1 -
  (\sigma_-/\sigma_+)\eqsp.$
\end{hypA}

Define, for all $y\in\Yset$,
\begin{equation}
      \label{eq:b+:b-}
      b_-(y)  \eqdef  \inf_{\param \in \paramset} \int g_\param(x,y) \lambda(\rmd x)  \quad\mbox{and}\quad  b_+(y) \eqdef  \sup_{\param \in \paramset} \int g_\param(x,y) \lambda(\rmd x) \eqsp.
    \end{equation}
    
For any sequence of r.v.  $Z\eqdef\{Z_{t}\}_{t\in\Zset}$ on
$(\Omega,\widetilde{\PP},\mathcal{F})$, let
\begin{equation}
\label{eq:Zfield}
\mathcal{F}_{k}^{Z} \eqdef \sigma\left(\{Z_{u}\}_{u\leq k}\right)\quad \mbox{and}\quad \mathcal{G}_{k}^{Z} \eqdef \sigma\left(\{Z_{u}\}_{u\geq k}\right)
\end{equation}
be $\sigma$-fields associated to $Z$. We also define the mixing coefficients
by, see \cite{davidson:1994},
\begin{equation}
\label{eq:def:mixing}
\mix^{Z}(n) \eqdef \underset{u\in\Zset}{\sup}\,\mix^{}(\mathcal{G}_{u+n}^{Z},\mathcal{F}_{u}^{Z})\eqsp, \forall\; n\geq0\eqsp,
\end{equation} 
where for any $\sigma$-algebras $\mathcal{F}$ and $\mathcal{G}$,
\begin{equation}
\label{eq:def:mixing:2}
\mix^{}(\mathcal{G},\mathcal{F}) \eqdef \underset{B\in\mathcal{G}}{\sup}\,|\widetilde{\PP}(B| \mathcal{F}) - \widetilde{\PP}(B)|\eqsp.
\end{equation} 
For $p>0$ and $Z$ a $\Rset^{d}$-valued random variable measurable w.r.t. the $\sigma$-algebra
$\mathcal{F}$, set 
\[
\lpnorm{Z}{p}\eqdef\left(\CE{}{}{ |Z|^p}\right)^{1/p} \eqsp.  
\]
\begin{hypA}\label{assum:moment:sup} 
\hspace{-0.15cm}-($p$)  
 $\lpnorm{\sup_{x,x' \in \Xset^2} \, |S(x,x',\bfY_{0})|}{p}<+\infty\eqsp.$
\end{hypA}

\begin{hypA}\label{assum:obs}
\begin{enumerate}[(a)]
\item \label{assum:obs:erg} $\bfY$ is a stationary sequence such that there exist $C\in[0,1)$ and $\mix\in(0,1)$ satisfying, for any $n\geq 0$,
  $\mix^{\bfY}(n) \leq C\mix^{n}$, where $\mix^{\bfY}$ is defined in
  \eqref{eq:def:mixing}.
\item \label{assum:obs:b+:b-} $\CE[]{}{}{|\log b_-(\bfY_0)| +|\log b_+(\bfY_0)| }<+\infty$.
\end{enumerate}
\end{hypA}

\begin{hypA}\label{assum:block-size} 
There exist $c>0$ and $a>1$ such that for all $n\ge 1$, $\tau_{n} = \lfloor cn^{a}\rfloor$.
\end{hypA}

Assumptions A\ref{assum:strong} to A\ref{assum:block-size} are the same as
in~\cite{lecorff:fort:2011}.  A\ref{assum:strong}, referred to as the {\em
  strong mixing condition}, is used to prove the uniform forgetting property of
the initial condition of the filter, see e.g.
\cite{delmoral:guionnet:1998} and \cite{delmoral:ledoux:miclo:2003}. 
This assumption is easy to check in finite state-space HMM or when the state-space is compact when the Markov kernel
$\boldsymbol{m_{\param}}$ is sufficiently regular. As noted in \cite{lecorff:fort:2011}, it can fail to hold in quite general situations. Nevertheless, the exponential forgetting
property needed to ensure the convergence results could be checked under weaker assumptions (see \cite{douc:fort:moulines:priouret:2009} for a Doeblin assumption). However, it would imply quite technical supplementary results out of the scope of this paper.
Examples of
observation sequences satisfying
A\ref{assum:obs} include, for example, stationary $\psi$-irreducible and positive recurrent
Markov chains which are geometrically ergodic (see
e.g.~\cite{meyn:tweedie:1993} for Markov chains theory).

\medskip

We need to control the $\rmL_{p}$-mean error on each block between $\bar S_{\tau_{n+1}}^{T_{n}}(\param_{n}, \bfY)$ and its SMC
approximation. This control is discussed in Section~\ref{sec:theory:SMC} below  when the SMC approximation is computed as described in
Section~\ref{subsec:BOEM:description:SMC}.

\subsection{$\rmL_{p}$-error of the SMC approximation}
\label{sec:theory:SMC}
For each block $n$, denote by $\{\adjfunc{t,n}{}{}\}_{t\leq \tau_{n+1}}$ and
$\{\kiss{t,n}{}\}_{t\leq \tau_{n+1}}$ respectively the adjustment multipliers and the
instrumental kernels in the SMC propagation step (see
\eqref{eq:instrumental-distribution-filtering}). For all $y\in\Yset$, define
\[
\ewght{+}{}(y)=\underset{\param\in\paramset}{\sup}\,\underset{\substack{(x,x^{'})\in\Xset\times\Xset \\ t\geq 0, n\geq 0}}{\sup} \dfrac{m_{\param}(x,x^{\prime})g_{\param}(x',y)}{\adjfunc{t,n}{}{x}\kiss{t,n}{}(x,x^{\prime})}  \eqsp.
\]
Consider the following assumptions.
\begin{hypA}
\label{assum:dub:lec}
  $|\adjfunc{}{}{}|_{\infty}\eqdef\sup_{t,n}|\adjfunc{t,n}{}{}|_{\infty} <
  \infty$. 
\end{hypA}
\begin{hypA}
\label{assum:moment:w}
\hspace{-0.15cm}-($p$)
$\lpnorm{\frac{\ewght{+}{}(\bfY_{0})}{b_{-}(\bfY_{0})}}{p}<+\infty\eqsp.$
\end{hypA}
In the case of the {\em Bootstrap filter}, A\ref{assum:dub:lec} holds (since
$v_{t,n} =1$) and  $\ewght{+}{}(y)=
\underset{\param\in\paramset}{\sup}\,\underset{\substack{x\in\Xset}}{\sup}
\,g_{\param}(x,y)$.

\begin{proposition}
\label{prop:check:lp:control}
Let $S : \Xset^{2}\times\Yset\longrightarrow \Rset^{d}$ be a measurable
function s.t. A\ref{assum:moment:sup}-($\bar p$) holds for some
$\bar p >2$. Assume A\ref{assum:strong},
  A\ref{assum:obs}, A\ref{assum:dub:lec}. Define $\Delta p \eqdef 2\bar p p/(\bar p- p)$ and assume A\ref{assum:moment:w}-($\Delta p$) holds for some $p\in(2,\bar p)$.  Then, there exists a constant $C$ s.t. for all $n\ge 0$,
\[
\normL{p}{\widetilde S_{n}(\param_{n}, \bfY) - \bar S_{\tau_{n+1}}^{T_{n}}(\param_{n}, \bfY)}\leq C\left(\frac{1}{N_{n+1}}+\frac{1}{\tau_{n+1}^{1/2}N_{n+1}^{1/2}}\right)\eqsp,
\]
where $\widetilde S_{n}(\param_{n}, \bfY)$ is computed with the algorithm described in Section~\ref{subsec:BOEM:description:SMC}.
\end{proposition}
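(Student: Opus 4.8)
The plan is to first condition on the observation sequence $\bfY$, reducing the statement to a purely Monte Carlo bound for a fixed realization of $\bfY$ (and fixed $\param=\param_n$), and then to remove the conditioning by taking $L_p$-norms over $\bfY$ with the moment assumptions A\ref{assum:moment:sup}-($\bar p$) and A\ref{assum:moment:w}-($\Delta p$). The starting point is the identity $\bar S_\tau^0(\param,\bfY)=\phi_\tau^\param(R_{\tau,\param})$ together with the backward recursion \eqref{eq:SMCapprox:tool2}--\eqref{eq:SMCapprox:tool3}: writing $\tau=\tau_{n+1}$, $T=T_n$, the forward-only SMC algorithm of Section~\ref{subsec:BOEM:description:SMC} approximates simultaneously the filter $\phi_t^\param$ by the weighted system $\{(\epart{t}{\ell},\ewght{t}{\ell})\}$ and the functions $R_{t,\param}$ by the quantities $\{R_{t,\param}^\ell\}$ carried along the particles, so that $\widetilde S_n(\param,\bfY)$ is the particle average of the $R_{\tau,\param}^\ell$. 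The error $\widetilde S_n-\bar S_\tau^T$ is therefore an SMC smoothing error for a $\tau^{-1}$-normalized additive functional of length $\tau$.

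For the conditional bound I would decompose the error, as is classical for forward-filtering backward-smoothing estimators, into a telescoping sum over the $\tau$ propagation steps, each increment comparing the estimator built from the particle system at time $t$ with the one built at time $t-1$. By \GisCentered{} each increment is, conditionally on the past particle system and on $\bfY$, centered, so the sum is a martingale-difference array; its conditional $L_p$-norm is then controlled by the Marcinkiewicz--Zygmund-type inequality \GNorm{}. Two features combine. First, A\ref{assum:strong} (strong mixing) yields the uniform exponential forgetting of the backward kernel $\mathrm{B}_t^\param$, which keeps the $\tau$ increments summable rather than accumulating. Secondly, A\ref{assum:dub:lec} together with the definition of $\ewght{+}{}$ bounds the importance weights. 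After the $1/\tau$ normalization in \eqref{eq:rewrite:barS}, the linear-in-$\tau$ accumulation of the $O(1/N)$ per-step biases produces the term $\tau^{-1}\cdot(\tau/N)=1/N$, while the martingale (variance) part produces $\tau^{-1}\cdot(\sqrt\tau/\sqrt N)=1/\sqrt{\tau N}$, giving exactly the announced two-term rate with a random prefactor $C(\bfY)$ built from the block suprema $\sup_{x,x'}|S(x,x',\bfY_t)|$ and the weight ratios $\ewght{+}{}(\bfY_t)/b_-(\bfY_t)$, $T<t\leq T+\tau$.

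It then remains to take the unconditional $L_p$-norm. Writing $\normL{p}{\widetilde S_n-\bar S_{\tau}^{T}}^p=\Expparam{\CExpparam{|\widetilde S_n-\bar S_{\tau}^{T}|^p}{\bfY}{\star}}{\star}$ and inserting the conditional bound, the task reduces to showing that $\normL{p}{C(\bfY)}$ is finite and uniform in $n$. Here I would apply H\"older's inequality to split $C(\bfY)$ into an $S$-factor and a weight-factor: the former is controlled in $L^{\bar p}$ by A\ref{assum:moment:sup}-($\bar p$), and the latter, which enters the SMC variance constant quadratically through $\ewght{+}{}/b_-$, is controlled in $L^{\Delta p}$ by A\ref{assum:moment:w}-($\Delta p$). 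The exponent $\Delta p=2\bar p p/(\bar p-p)$ is exactly dictated by this split, since $1/\Delta p=\tfrac12(1/p-1/\bar p)$ is precisely the remaining H\"older budget once an $L^{\bar p}$ factor has been extracted to reach $L^p$ and the weight factor appears squared. Stationarity of $\bfY$ and the mixing assumption A\ref{assum:obs} then make the bound uniform in the block index $n$ and, crucially, independent of the block length $\tau_{n+1}$.

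The main obstacle is precisely this last uniformity: one must ensure that taking the $L_p$-norm over $\bfY$ of the random SMC prefactor $C(\bfY)$ does not reintroduce a factor growing with $\tau$, which would destroy the rate. This is where the uniform forgetting coming from A\ref{assum:strong} is essential --- it lets one telescope the block so that, after the dust settles, the $\bfY$-dependence enters through single-time quantities whose moments are controlled by stationarity, rather than through a crude supremum over the whole block that would carry a $\tau$-dependent penalty. Getting the exponent bookkeeping to close with the stated $\Delta p$, and checking that the \emph{same} constant serves simultaneously for the bias ($1/N$) and the variance ($1/\sqrt{\tau N}$) contributions, is the delicate accounting step of the proof.
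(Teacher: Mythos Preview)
Your proposal is correct and follows essentially the same route as the paper: the telescoping decomposition of the smoothing error over the $\tau$ propagation steps (Lemma~\ref{lem:rew:err} and Proposition~\ref{prop:DeltaStoC-D} in the appendix), the split into a martingale part (the $D_t$ terms, yielding the $1/\sqrt{\tau N}$ rate via Burkholder) and a second-order remainder (the $C_t$ terms, yielding $1/N$), the exponential forgetting from A\ref{assum:strong} to make the $\rho^{|t-s|}$-weighted sums summable, and the H\"older split with $\alpha=\bar p/p$ that forces the weight ratio into $L^{\Delta p}$ all match the paper's argument. One small imprecision: the raw telescoping increments are \emph{not} conditionally centered because each carries a random particle normalization in the denominator, so the further split $D_t+C_t$ is needed before the martingale inequality applies---and it is the $C_t$ term (your ``bias''), not the variance term, in which the weight ratio enters quadratically and dictates the exponent $\Delta p$; your later two-term accounting shows you have this structure right even if the wording around ``each increment is centered'' is slightly off.
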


\subsection{Asymptotic behavior of the Particle Block Online EM algorithms}
\label{sec:PBONEM}
Following \cite{lecorff:fort:2011}, we address the convergence of the P-BOEM
algorithm as the convergence of a perturbed version of the {\em limiting EM}
recursion.  The following result, which is proved in~\cite[Theorem
4.1.]{lecorff:fort:2011}, shows that when $\tau$ is large, the BOEM statistic
$\bar S^{T}_\tau(\param, \bfY)$ is an approximation of a deterministic quantity
$\mapS(\param)$; the {\em limiting EM} algorithm is the iterative procedure defined by
$\param_{n+1} = \limEMmapt(\param_n)$ where
 \begin{equation}
  \label{eq:limem}
\limEMmapt(\param) \eqdef \bar\param(\mapS(\param)) \eqsp, \forall \param\in\paramset\eqsp;
 \end{equation}
 the mapping $\bar \param$ is given by A\ref{assum:exp}.
\begin{theorem}\label{th:LGN}
  Let $S : \Xset^{2}\times\Yset\longrightarrow \Rset^{d}$ be a measurable function s.t. A\ref{assum:moment:sup}-($1$) holds. Assume A\ref{assum:strong}  and
  A\ref{assum:obs}(\ref{assum:obs:erg}).  For any $\param\in\paramset$, there exists a
  $\PPim$-integrable r.v.  $\CE[\bfY]{}{\param}{S(X_{-1},X_{0},\bfY_{0})}$ s.t.
  for any $T>0$,
  \begin{equation}
   \label{th:LGN:ergodic}
  \bar S_{\tau}^{T}(\param, \bfY)
    \underset{\tau\rightarrow
      +\infty}{\longrightarrow} \mapS(\param) \eqdef \CE[]{}{}{\CE[\bfY]{}{\param}{S(X_{-1},X_{0},\bfY_{0})}}\eqsp,\quad
    \ps{\PPim}
\end{equation}
Moreover, $\param \mapsto \mapS(\param)$ is continuous on $\paramset$.
\end{theorem}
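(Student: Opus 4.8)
The plan is to read \eqref{th:LGN:ergodic} as a strong law of large numbers for the BOEM statistic and to reduce it, through the exponential forgetting of the filter and the backward smoothing kernels, to Birkhoff's ergodic theorem applied to the stationary observation sequence $\bfY$. First I would construct the candidate limit. Under the strong mixing condition A\ref{assum:strong}, the forward filters $\phi_{t}^{\param}$ and the backward kernels $\mathrm{B}_{t}^{\param}$ contract in total variation at a geometric rate governed by $\rho = 1-\sigma_-/\sigma_+$ (the standard Dobrushin/Hilbert-metric argument, cf.\ the references cited after A\ref{assum:strong}). Consequently, for fixed $\param$ and $\PPim$-almost every realization of the doubly infinite sequence $\bfY$, the finite-window marginal smoothers $\smoothfunc{\chi,r}{\param,0,t}(S,\bfY)$ form a Cauchy sequence as $r\to-\infty$ and $t\to+\infty$, and I would define $\CE[\bfY]{}{\param}{S(X_{-1},X_{0},\bfY_{0})}$ as their common limit. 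The pointwise domination $|\smoothfunc{\chi,r}{\param,0,t}(S,\bfY)|\leq \sup_{x,x'\in\Xset^{2}}|S(x,x',\bfY_{0})|$, together with A\ref{assum:moment:sup}-($1$) and stationarity of $\bfY$, shows this limit is $\PPim$-integrable.

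Writing $h_\param \eqdef \CE[\bfY]{}{\param}{S(X_{-1},X_{0},\bfY_{0})}$ and $h_\param\circ\shift^{t}$ for its time shift (the infinite-window smoother of $(X_{t-1},X_t)$ given all of $\bfY$), I would then decompose
\[
\bar S_{\tau}^{T}(\param,\bfY) - \frac{1}{\tau}\sum_{t=T+1}^{T+\tau} h_\param\circ\shift^{t}
= \frac{1}{\tau}\sum_{t=T+1}^{T+\tau}\left\{\smoothfunc{\chi,T}{\param,t,T+\tau}(S,\bfY) - h_\param\circ\shift^{t}\right\}\eqsp.
\]
Each summand compares the finite-window smoother on $[T,T+\tau]$ with the infinite-window one, so forgetting bounds it (up to a constant) by $\left(\rho^{\,t-T}+\rho^{\,T+\tau-t}\right)M_{t}$, where $M_{t}\eqdef\sup_{x,x'\in\Xset^{2}}|S(x,x',\bfY_{t})|$: the first factor controls forgetting of the initial time $T$, the second the truncation of the horizon at $T+\tau$. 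I would then argue that the two boundary contributions are $o(1)$ almost surely after dividing by $\tau$: the weighted sums $\sum_{k\geq0}\rho^{k}M_{T+1+k}$ have expectation $\CE[]{}{}{M_{0}}/(1-\rho)<+\infty$ by stationarity and A\ref{assum:moment:sup}-($1$), hence are a.s.\ finite and negligible after division by $\tau\to+\infty$ (the right-boundary term is handled by reindexing from the far end of the block).

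For the remaining Cesàro average, A\ref{assum:obs}(\ref{assum:obs:erg}) makes $\bfY$ stationary and geometrically mixing, hence ergodic, so Birkhoff's theorem yields $\frac{1}{\tau}\sum_{t=T+1}^{T+\tau}h_\param\circ\shift^{t}\to\CE[]{}{}{h_\param}=\mapS(\param)$ almost surely, a limit independent of the starting index $T$; combined with the previous step this gives \eqref{th:LGN:ergodic}. Finally, for continuity of $\param\mapsto\mapS(\param)$ I would show that $\param\mapsto h_\param$ is a.s.\ continuous: by A\ref{assum:exp}\eqref{assum:exp:decomp} the integrand $m_{\param}(x,x')g_{\param}(x',y)=\exp\{\phi(\param)+\pscal{S(x,x',y)}{\psi(\param)}\}$ is continuous in $\param$, so each finite-window smoother (a ratio of integrals) is continuous by dominated convergence using the bounds of A\ref{assum:strong}; since the forgetting rate $\rho$ is uniform in $\param$, the convergence to $h_\param$ is locally uniform, and a further dominated convergence in $\CE[]{}{}{h_\param}$ with the integrable envelope $\sup_{x,x'}|S(x,x',\bfY_{0})|$ transfers continuity to $\mapS$.

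The main obstacle is the forgetting step itself: one must establish the exponential contraction bounds, make them uniform in $\param$ and in the observation realization, and then carefully pair the geometric decay against the \emph{unbounded} random envelope $M_{t}=\sup_{x,x'}|S(x,x',\bfY_{t})|$ so that the boundary terms vanish after averaging — this is exactly where A\ref{assum:strong} and the $\rmL_{1}$ control A\ref{assum:moment:sup}-($1$) are indispensable, the latter compensating for the lack of a uniform bound on $S$.
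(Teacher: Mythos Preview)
The paper does not actually prove this theorem: immediately before the statement it writes ``The following result, which is proved in~\cite[Theorem 4.1.]{lecorff:fort:2011}'', so there is no in-paper proof to compare against. Your outline is precisely the standard argument one expects in that reference: exponential forgetting of the smoother under A\ref{assum:strong} to construct the limiting functional and control boundary terms, followed by Birkhoff's theorem under A\ref{assum:obs}\eqref{assum:obs:erg}, and then a dominated-convergence argument for continuity.

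Two small points are worth tightening. First, the right-boundary term $\tau^{-1}\sum_{t=T+1}^{T+\tau}\rho^{T+\tau-t}M_t$ is not handled by the same finiteness argument as the left one, because the sum shifts with $\tau$; you need an extra step (e.g.\ split at a fixed depth $K$, use the ergodic theorem on the bulk to get $\limsup\leq \rho^{K}\CE[]{}{}{M_0}$, and use $M_n/n\to 0$ a.s.\ on the last $K$ terms, then let $K\to\infty$). Second, your continuity argument invokes A\ref{assum:exp}\eqref{assum:exp:decomp}, which is not among the hypotheses listed in the theorem statement; some regularity of $\param\mapsto(m_\param,g_\param)$ is indeed required for continuity of $\mapS$, so either appeal to A\ref{assum:exp} as a standing assumption of the paper or flag that continuity needs this extra input.
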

The asymptotic behavior of the limiting EM algorithm is addressed in
\cite[Section~$4.2$]{lecorff:fort:2011}: the main ingredient is that the map
$\limEMmapt$ admits a positive and continuous Lyapunov function $\lyap$ w.r.t. the set 
\begin{equation}
\label{eq:staset}
\Staset\eqdef
\{\param\in\paramset; \limEMmapt(\param)=\param\}\eqsp,
\end{equation}
 i.e. \textit{(i)} $\lyap \circ\limEMmapt(\param) \geq \lyap(\param)$ for any $\param \in \paramset$ and,
\textit{(ii)} for any compact subset $\mathcal{K}$ of $\paramset \setminus
\Staset$, $\inf_{\param \in \mathcal{K}} \lyap \circ \limEMmapt(\param) -
\lyap(\param)>0$. This Lyapunov function is equal to $\exp(\ell(\param))$,
where the contrast function $\ell(\param)$ is the (deterministic) limit of
the normalized log-likelihood of the observations when $T\to +\infty$ (see \cite[Theorem $4.9$]{lecorff:fort:2011-supp}).
 
Theorem~\ref{th:Pbonem:conv} establishes the convergence of the P-BOEM algorithm to the set $\Staset$ defined by \eqref{eq:staset}. The proof of Theorem~\ref{th:Pbonem:conv} is an application of \cite[Theorem~$4.4$]{lecorff:fort:2011}. An additional assumption on the number of particles per block is required to check \cite[A$6$]{lecorff:fort:2011} (note indeed that A\ref{assum:particle} below and Proposition~\ref{prop:check:lp:control} imply the condition in \cite{lecorff:fort:2011} about the $\rmL_{p}$-control of the error).

\begin{hypA}\label{assum:particle} 
There exist $c>0$ and $d\ge (a+1)/2a$ (where $a$ is given by A\ref{assum:block-size}) such that, for all $n\ge 1$, $N_{n} = \lfloor c \tau_{n}^{d}\rfloor$.
\end{hypA}

\begin{theorem}
\label{th:Pbonem:conv}
Assume A\ref{assum:exp}-\ref{assum:strong}, A\ref{assum:moment:sup}-($\bar p$), A\ref{assum:obs}-\ref{assum:dub:lec} and A\ref{assum:particle} for some
$\bar p>2$. Define $\Delta p \eqdef 2\bar p p/(\bar p- p)$ and assume A\ref{assum:moment:w}-($\Delta p$) holds for some $p\in(2,\bar p)$.  Assume in addition that $\lyap(\Staset)$ has an empty interior. Then, there exists $w_{\star}$ s.t. $\{\lyap(\param_{n})\}_{n\geq 0}$ converges almost surely to $w_{\star}$ and
  $\{\param_{n}\}_{n\geq 0}$ converges to
  $\{\param\in\Staset;\lyap(\param)=w_{\star}\}$.
\end{theorem}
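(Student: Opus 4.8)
The plan is to derive Theorem~\ref{th:Pbonem:conv} from the abstract convergence result \cite[Theorem~4.4]{lecorff:fort:2011}. That result governs the Monte Carlo BOEM recursion $\param_{n+1} = \bar\param(\widetilde S_n(\param_n, \bfY))$, in which $\widetilde S_n(\param_n, \bfY)$ is an arbitrary approximation of the exact block statistic $\bar S_{\tau_{n+1}}^{T_n}(\param_n, \bfY)$ whose $\rmL_p$-error obeys the control condition \cite[A6]{lecorff:fort:2011}; it asserts convergence of this recursion toward the level sets of the Lyapunov function $\lyap$ associated with the map $\limEMmapt = \bar\param \circ \mapS$. The P-BOEM sequence is precisely this recursion with $\widetilde S_n$ taken to be the SMC approximation of Section~\ref{subsec:BOEM:description:SMC}. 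Since A\ref{assum:exp}--A\ref{assum:block-size} are exactly the assumptions of \cite{lecorff:fort:2011}, the Lyapunov function $\lyap = \exp(\ell)$ and its monotonicity/separation properties (i)--(ii) relative to $\Staset$ are available from \cite[Theorem~4.9]{lecorff:fort:2011-supp}, and the empty-interior condition on $\lyap(\Staset)$ is imposed in the statement. Hence everything reduces to verifying \cite[A6]{lecorff:fort:2011}, i.e. the $\rmL_p$-control of the approximation error $\widetilde S_n(\param_n, \bfY) - \bar S_{\tau_{n+1}}^{T_n}(\param_n, \bfY)$.

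Conceptually, the full perturbation of the limiting recursion splits as $\widetilde S_n(\param_n, \bfY) - \mapS(\param_n) = (\widetilde S_n(\param_n, \bfY) - \bar S_{\tau_{n+1}}^{T_n}(\param_n, \bfY)) + (\bar S_{\tau_{n+1}}^{T_n}(\param_n, \bfY) - \mapS(\param_n))$. The second, intrinsic, term is the block fluctuation already handled inside \cite[Theorem~4.4]{lecorff:fort:2011} under A\ref{assum:strong}--A\ref{assum:block-size} (through Theorem~\ref{th:LGN} and the mixing bounds of A\ref{assum:obs}), so that only the first, Monte Carlo, term must be controlled anew. For this term, Proposition~\ref{prop:check:lp:control}, valid under A\ref{assum:moment:sup}-($\bar p$), A\ref{assum:strong}, A\ref{assum:obs}, A\ref{assum:dub:lec} and A\ref{assum:moment:w}-($\Delta p$), provides, uniformly in $n$, the bound
\begin{equation*}
\normL{p}{\widetilde S_n(\param_n, \bfY) - \bar S_{\tau_{n+1}}^{T_n}(\param_n, \bfY)} \leq C\left(\frac{1}{N_{n+1}} + \frac{1}{\tau_{n+1}^{1/2} N_{n+1}^{1/2}}\right) \eqsp.
\end{equation*}

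The main, and essentially the only computational, step is to turn this bound into the summability demanded by \cite[A6]{lecorff:fort:2011}. Inserting A\ref{assum:block-size}, $\tau_n = \lfloor c n^a \rfloor$ with $a > 1$, and A\ref{assum:particle}, $N_n = \lfloor c \tau_n^d \rfloor$ with $d \geq (a+1)/(2a)$, yields $N_{n+1}^{-1} \asymp \tau_{n+1}^{-d} \asymp n^{-ad}$ and $\tau_{n+1}^{-1/2} N_{n+1}^{-1/2} \asymp n^{-a(1+d)/2}$. The threshold $d \geq (a+1)/(2a)$ is calibrated exactly so that $ad \geq (a+1)/2$, whence $N_{n+1}^{-1} = O(n^{-(a+1)/2})$; because $a > 1$ gives $(a+1)/2 > 1$, the dominant series $\sum_n N_{n+1}^{-1}$ converges, and the second contribution decays faster still (its exponent satisfies $a(1+d)/2 \geq (3a+1)/4 > 1$). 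Consequently $\sum_n \normL{p}{\widetilde S_n(\param_n, \bfY) - \bar S_{\tau_{n+1}}^{T_n}(\param_n, \bfY)} < +\infty$, which is precisely the $\rmL_p$-control required by \cite[A6]{lecorff:fort:2011}. I expect the only delicate point to be the bookkeeping that matches this decomposition and summability to the exact wording of \cite[A6]{lecorff:fort:2011}, confirming that the block fluctuation is absorbed into the hypotheses of \cite[Theorem~4.4]{lecorff:fort:2011} while the Monte Carlo term alone carries the new summability; once this is in place, the conclusion of the theorem follows by a direct application of \cite[Theorem~4.4]{lecorff:fort:2011}.
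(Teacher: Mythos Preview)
Your proposal is correct and follows essentially the same route as the paper. The paper does not give a detailed proof but states that Theorem~\ref{th:Pbonem:conv} is an application of \cite[Theorem~4.4]{lecorff:fort:2011}, with the only new ingredient being the verification of \cite[A6]{lecorff:fort:2011} via Proposition~\ref{prop:check:lp:control} together with A\ref{assum:particle}; you reproduce exactly this argument and, in addition, spell out the summability computation that the paper leaves implicit.
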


The assumption on $\lyap(\Staset)$ made in Theorem~\ref{th:Pbonem:conv} is in common use to prove the convergence of EM based procedures or stochastic approximation algorithms. It is used in \cite{wu:1983} to find the limit points of the classical EM algorithm.  See also \cite{delyon:lavielle:moulines:1999} and \cite{fort:moulines:2003} for the stability of the Monte Carlo EM algorithm and of a stochastic approximation of the EM algorithm. If $\lyap$ is sufficiently regular, Sard's theorem states that $\lyap(\Staset)$ has Lebesgue measure $0$ and hence has an empty interior.
 
Under the assumptions of Theorem~\ref{th:Pbonem:conv}, it can be proved that,
along any converging P-BOEM sequence $\{\param_{n}\}_{n\geq 0}$ to
$\param_{\star}$ in $\Staset$, the averaged P-BOEM statistics $\{
\widetilde{\Sigma}_{n}\}_n$ defined by (\ref{eq:PBonem:averaged}) (see
also~(\ref{eq:tildeSigma:bis})) converge to $\mapS(\param_{\star})$, see Proposition~\ref{prop:conv:averaged}.  Since
$\bar\param$ is continuous, the averaged P-BOEM sequence
$\{\widetilde{\param}_{n}\}_{n\geq 0}$ converges to
$\bar\param(\mapS(\param_{\star}))=\limEMmapt(\param_{\star})$. Since
$\param_\star \in \Staset$, $\limEMmapt(\param_{\star})=\param_{\star}$, showing
that the averaged P-BOEM algorithm has the same limit points as the P-BOEM
algorithm. 

\subsection{Rate of convergence of the Particle Block Online EM algorithms}
\label{sec:averaging}
In this section, we consider a converging P-BOEM sequence $\{\param_n \}_{n
  \geq 0}$ with limiting point $\param_\star \in \Staset$. It can be shown, as
in \cite[Proposition $3.1$]{lecorff:fort:2011-supp}, that the convergence of the
sequence $\{\param_n \}_{n \geq 0}$ is equivalent to the convergence of the
sufficient statistics $\{ \widetilde S_{n}(\param_{n},
\bfY)\}_{n\geq 0}$: along any P-BOEM sequence converging to $\param_{\star}$,
this sequence of sufficient statistics converges to $s_{\star} =
\mapS(\param_{\star})$.  
Let $\limEMmap : \Sset \to \Sset$ be the limiting EM map defined on the space
of sufficient statistics by
 \begin{equation}
 \label{eq:mapping:limitingEM}
 \limEMmap(s) \eqdef \mapS(\bar\param(s))
 \eqsp, \qquad \forall s\in\Sset \eqsp.
\end{equation}
To that goal consider the following assumption.
\begin{hypA}\label{assum:stable:fixpoint}
  \begin{enumerate}[(a)]
  \item $\mapS$ and $\bar\param$ are twice continuously differentiable on $\paramset$ and $\Sset$. 
  \item $\mathrm{sp}(\nabla_{s}\limEMmap(s_{\star}))\in (0,1)$ where $\mathrm{sp}$ denotes the spectral radius.
  \end{enumerate}
 \end{hypA}
 
We will use the following notation: for any sequence of random variables
$\{Z_{n}\}_{n\geq 0}$, write $Z_{n} = O_{\rmL_{p}}(1)$ if
$\limsup_{n}\lpnorm{Z_{n}}{p}<\infty$; and $Z_{n} = O_{\mathrm{a.s}}(1)$ if $\sup_{n}|Z_{n}|<+\infty$ $\ps{\PPim}$

\begin{theorem}
\label{th:rate:sto}
Assume A\ref{assum:exp}-\ref{assum:strong}, A\ref{assum:moment:sup}-($\bar p$), A\ref{assum:obs}-\ref{assum:dub:lec} and A\ref{assum:particle}-\ref{assum:stable:fixpoint} for some
$\bar p>2$. Define $\Delta p \eqdef 2\bar p p/(\bar p- p)$ and assume A\ref{assum:moment:w}-($\Delta p$) holds for some $p\in(2,\bar p)$. Then,
\begin{equation}
  \label{eq:rate:nonaverage:sto}
 \left[\param_{n} - \param_{\star}\right]\1_{\lim_{k}\param_{k}
=\param_{\star}}   = O_{\rmL_{p/2}}\left(\frac{1}{\tau_{n}^{1/2}}\right)  O_{\mathrm{a.s}}\left(1\right)\eqsp.
\end{equation}
On the other hand, for the averaged sequence, 
\begin{equation}
  \label{eq:rate:average:sto}
 \left[\widetilde\param_{n} - \param_{\star}\right]\1_{\lim_{k}\param_{k}=\param_{\star}} = O_{\rmL_{p/2}}\left(\frac{1}{T_n^{1/2}}\right)O_{\mathrm{a.s}}\left(1\right) \eqsp.
\end{equation}
\end{theorem}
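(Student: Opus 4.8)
The plan is to follow the perturbation-analysis programme of \cite{lecorff:fort:2011-supp}: realise the P-BOEM iterates on the space of sufficient statistics as a \emph{perturbed} version of the deterministic recursion $s\mapsto\limEMmap(s)$ of \eqref{eq:mapping:limitingEM}, show the perturbation has the announced $\rmL_{p}$-order thanks to Proposition~\ref{prop:check:lp:control}, and then transport the rate from the statistics to the parameters through the twice-differentiable map $\bar\param$. Concretely, write $s_n\eqdef\widetilde S_n(\param_n,\bfY)$, so that step~\eqref{PBOEM:param} reads $\param_{n+1}=\bar\param(s_n)$ and hence $s_n=\limEMmap(s_{n-1})+\xi_n$ with
\[
\xi_n\eqdef\eta_n+\varepsilon_n,\qquad
\eta_n\eqdef\bar S_{\tau_{n+1}}^{T_n}(\param_n,\bfY)-\mapS(\param_n),\qquad
\varepsilon_n\eqdef\widetilde S_n(\param_n,\bfY)-\bar S_{\tau_{n+1}}^{T_n}(\param_n,\bfY)\eqsp.
\]
Here $\eta_n$ is the fluctuation of the block statistic about its ergodic limit $\mapS$ (Theorem~\ref{th:LGN}), which under A\ref{assum:strong} and the mixing condition A\ref{assum:obs} is of order $\tau_{n+1}^{-1/2}$ in $\rmL_{p}$ exactly as in \cite{lecorff:fort:2011-supp}, while $\varepsilon_n$ is the SMC error controlled by Proposition~\ref{prop:check:lp:control}. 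The whole result is essentially an application of the rate theorems of \cite{lecorff:fort:2011-supp}, the new ingredient being the verification of their generic $\rmL_p$-control hypothesis via Proposition~\ref{prop:check:lp:control}.

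The first task is to fix the order of the perturbation. Under A\ref{assum:block-size}--A\ref{assum:particle} one has $N_{n+1}\asymp\tau_{n+1}^{d}$ with $d\ge (a+1)/(2a)>1/2$, so Proposition~\ref{prop:check:lp:control} gives $\lpnorm{\varepsilon_n}{p}=O(\tau_{n+1}^{-d}+\tau_{n+1}^{-(1+d)/2})=O(\tau_{n+1}^{-1/2})$; the SMC error is therefore of the same order as, and in fact dominated by, the ergodic fluctuation, whence $\lpnorm{\xi_n}{p}=O(\tau_{n+1}^{-1/2})$.

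For the non-averaged rate, on $\{\lim_k\param_k=\param_\star\}$ the statistics converge to $s_\star=\mapS(\param_\star)$, a fixed point of $\limEMmap$ since $\param_\star\in\Staset$. Using A\ref{assum:stable:fixpoint}(a) I would linearise, writing $s_n-s_\star=\nabla_s\limEMmap(s_\star)(s_{n-1}-s_\star)+\xi_n+\mathrm{r}_n$ with quadratic remainder $\mathrm{r}_n=O(|s_{n-1}-s_\star|^2)$; it is precisely this quadratic term that, via Cauchy--Schwarz, forces the final statement into $\rmL_{p/2}$. Because $\mathrm{sp}(\nabla_s\limEMmap(s_\star))<1$ by A\ref{assum:stable:fixpoint}(b), the linear part is a contraction, so iterating shows that $s_n-s_\star$ inherits the order of the most recent perturbations, namely $O(\tau_n^{-1/2})$; the transient contributions of the (random) starting point and of the random time after which the linearisation is valid produce the $O_{\mathrm{a.s}}(1)$ factor. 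Transporting through the locally Lipschitz map $\bar\param$, with $\param_n-\param_\star=\bar\param(s_{n-1})-\bar\param(s_\star)$, yields \eqref{eq:rate:nonaverage:sto}.

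For the averaged sequence I would write $\widetilde\Sigma_n-s_\star=T_n^{-1}\sum_{j=1}^n\tau_j(s_{j-1}-s_\star)$ and use the linearised recursion together with Abel summation to reduce the averaged error to $(\mathrm{Id}-\nabla_s\limEMmap(s_\star))^{-1}T_n^{-1}\sum_j\tau_j\xi_j$, up to boundary and remainder terms that are $o(T_n^{-1/2})$; the matrix $\mathrm{Id}-\nabla_s\limEMmap(s_\star)$ is invertible by A\ref{assum:stable:fixpoint}(b). The conditionally centred (fluctuation) part of $T_n^{-1}\sum_j\tau_j\xi_j$ gives, by a second-moment estimate $T_n^{-2}\sum_j\tau_j^2\,O(\tau_j^{-1})=O(T_n^{-1})$, the rate $T_n^{-1/2}$; this is the classical Polyak--Ruppert gain and uses $T_n\gg\tau_n$. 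The main obstacle is the \emph{bias} of the SMC error: $\CE[]{}{}{\varepsilon_n}$ can be as large as $O(N_{n+1}^{-1})=O(\tau_{n+1}^{-d})$ and, unlike the fluctuations, it accumulates essentially linearly, contributing $T_n^{-1}\sum_j\tau_j^{1-d}\asymp n^{-ad}$; demanding this be $O(T_n^{-1/2})=O(n^{-(a+1)/2})$ is exactly the constraint $d\ge(a+1)/(2a)$ of A\ref{assum:particle}, which is why the averaged rate requires the stronger lower bound on $d$ and why averaging is comparatively insensitive to the number of particles. The technical heart, and the delicate point, is controlling the cross-correlations of $\{\xi_j\}$ across blocks in this second-moment bound — through the mixing coefficients A\ref{assum:obs} for the $\eta$ part and the conditional independence of the block-wise particle systems given $\bfY$ for the $\varepsilon$ part — where Proposition~\ref{prop:check:lp:control} is invoked block by block; transferring through $\bar\param$ then gives \eqref{eq:rate:average:sto}.
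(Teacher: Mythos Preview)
Your proposal is correct and follows essentially the same route as the paper: the paper's proof is the single sentence ``obtained by checking the assumptions of \cite[Theorem~5.1 and Theorem~5.2]{lecorff:fort:2011}'', and the only new verification is exactly what you isolate, namely that Proposition~\ref{prop:check:lp:control} (combined with the ergodic $\rmL_p$-bound on $\eta_n$ from \cite[Theorem~4.1]{lecorff:fort:2011}, as in Proposition~\ref{prop:lpcontrol:sto}) yields $\lpnorm{\xi_n}{p}=O(\tau_{n+1}^{-1/2})$ under A\ref{assum:particle}. Your sketch of the linearisation and Abel-summation mechanics is a faithful unpacking of what Theorems~5.1--5.2 of \cite{lecorff:fort:2011} do internally; in particular your observation that the accumulated SMC bias $T_n^{-1}\sum_j\tau_j^{1-d}\asymp n^{-ad}$ forces $d\ge(a+1)/(2a)$ is precisely the reason A\ref{assum:particle} takes the form it does, though the paper itself leaves this computation implicit in the cited theorems.
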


The proof of Theorem~\ref{th:rate:sto} is obtained by checking the assumptions of \cite[Theorem $5.1$ and Theorem $5.2$]{lecorff:fort:2011}.

Eq.~(\ref{eq:rate:nonaverage:sto}) shows that the error $\param_{n} - \param_{\star}$ has a
$\rmL_{p/2}$-norm decreasing as $\tau_{n}^{-1/2}$. This result is obtained by assuming $N_n \sim \tau_n^{d}$, with $d\ge (a+1)/2a$, which implies that  the SMC error and
the BOEM error are balanced. Unfortunately, such a rate is obtained after a total number
of observations $T_n$; therefore, as discussed in \cite{lecorff:fort:2011}, it
is quite sub-optimal.  Eq~(\ref{eq:rate:average:sto}) shows that the rate of
convergence equal to the square root of the total number of observations up to
block $n$, can be reached by using the averaged P-BOEM algorithm: the $\rmL_{p/2}$-norm of the error $\widetilde
\param_{n}-\param_{\star}$ has a rate of convergence proportional to $T_{n}^{-1/2}$. Here again, note that since $N_{n}$ is chosen as in A\ref{assum:particle} the SMC error and
the BOEM error are balanced.

\section{Proofs}
\label{sec:proofs:sto}
For a function $h$, define $\osc(h) \eqdef \sup_{z,z'} | h(z)-h(z')|$.
\subsection{Proof of Proposition~\ref{prop:check:lp:control}}
For any $t\in\{0,\dots, \tau_{n+1}\}$, define the $\sigma$-algebra
$\mathcal{F}_{n,t}^{N_{n+1}}$ by
\begin{equation}
  \label{eq:Filtrations:Fnt}
  \mathcal{F}_{n,t}^{N_{n+1}}\eqdef \sigma\left\{\param_n,\bfY_{T_n+1:T_n+t+1},\left(\epart{s}{\ell},\ewght{s}{\ell}\right); \ell\in\{1,\dots,N_{n+1}\}; 0\leq s\leq t\right\}\eqsp.
\end{equation}
We use $S_{s}(x,x')$ as a shorthand notation for $S(x,x',\bfY_s)$.
Under A\ref{assum:strong} and A\ref{assum:dub:lec}, Propositions~B.$5$., ~B.$8$. and~B.$9$. in Appendix~B of \cite{lecorff:fort:2011b} can be applied so that
\begin{equation}
\label{eq:diff:sto}
\mathbb{E}_{}\left[\left\vert \widetilde S_{n}(\param_{n}, \bfY) - \bar S_{\tau_{n+1}}^{T_{n}}(\param_{n}, \bfY)\right\vert^{p}\right]\leq C\left(I_{1,n}+I_{2,n}\right)\eqsp,
\end{equation}
where 
\begin{align*}
I_{1,n}&\eqdef \frac{1}{\tau_{n+1}^{\frac{p}{2}+1}N_{n+1}^{\frac{p}{2}}} 
\times\sum_{t=0}^{\tau_{n+1}}\mathbb{E}_{}\left[\left|\frac{\ewght{+}{}(\bfY_{t+T_n})}{b_{-}(\bfY_{t+T_{n}})}\sum_{s=1}^{\tau_{n+1}}\rho^{|t-s|}\osc\{S_{s+T_n}\}\right|^{p}\right]\eqsp,\\
I_{2,n}&\eqdef \frac{1}{\tau_{n+1}N_{n+1}^{p}}\,\\
&\hspace{0.9cm}\times\sum_{t=0}^{\tau_{n+1}}\mathbb{E}_{}\left[\left|\frac{\ewght{+}{}(\bfY_{t+T_n})}{b_{-}(\bfY_{t+T_{n}})}\right|^{2p}\mathbb{E}_{}\left[\left|\sum_{s=1}^{\tau_{n+1}}\rho^{|t-s|}\osc\{S_{s+T_n}\}\right|^{\bar p}\middle|\mathcal{F}_{n,t-1}^{N_{n+1}}\right]^{p/\bar p}\right]\eqsp.
\end{align*}
By the H\"older inequality applied with $\alpha \eqdef \bar p /p\geq 1$ and $\beta^{-1} \eqdef 1 -
\alpha^{-1}$,
\begin{equation*}
I_{1,n}\leq \frac{1}{\tau_{n+1}^{\frac{p}{2}+1}N_{n+1}^{\frac{p}{2}}}\sum_{t=1}^{\tau_{n+1}}\normL{\bar p}{\sum_{s=1}^{\tau_{n+1}}\rho^{\vert t-s\vert}\osc \{S_{s+T_{n}}\}}^{p}\times\normL{2\bar p p /(\bar p -p)}{\frac{\ewght{+}{}(\bfY_{t+T_{n}})}{b_{-}(\bfY_{t+T_{n}})}}^{2p} \eqsp.
\end{equation*}
By A\ref{assum:strong}, A\ref{assum:moment:sup}-($\bar p$), A\ref{assum:obs}\eqref{assum:obs:erg} and A\ref{assum:moment:w}-($\Delta p$), we have
\begin{equation*}
I_{1,n}\leq\frac{C}{\tau_{n+1}^{\frac{p}{2}}N_{n+1}^{\frac{p}{2}}}\eqsp.
\end{equation*}
Using similar arguments for $I_{2,n}$ yields
$I_{2,n}\leq C \, N_{n+1}^{-p}$, which concludes the proof. 
\subsection{ $\rmL_{p}$-controls }
\begin{proposition}
\label{prop:lpcontrol:sto}
Let $S : \Xset^{2}\times\Yset\longrightarrow \Rset^{d}$ be a measurable
function s.t. A\ref{assum:moment:sup}-($\bar p$) holds for some $\bar p >2$. Assume
A\ref{assum:strong}, A\ref{assum:obs}, A\ref{assum:dub:lec} and A\ref{assum:moment:w}-($\Delta p$) for some $p\in(2,\bar
p)$, where $\Delta
p \eqdef 2\bar p p/(\bar p- p)$. There exists a constant $C$
s.t. for any $n\geq 1$,
\begin{equation*}
  \lpnorm{\widetilde S_{n}(\param_{n}, \bfY) - \mapS(\param_n)}{p}
{}\leq  C\left(\frac{1}{\sqrt{\tau_{n+1}}}+\frac{1}{N_{n+1}}\right) \eqsp.
\end{equation*}
\end{proposition}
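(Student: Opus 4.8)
The plan is to use the triangle inequality with the exact block statistic $\bar S_{\tau_{n+1}}^{T_{n}}(\param_{n}, \bfY)$ as a pivot, thereby separating the Monte Carlo error from the block (BOEM) error:
\[
\lpnorm{\widetilde S_{n}(\param_{n}, \bfY) - \mapS(\param_n)}{p}
\leq
\lpnorm{\widetilde S_{n}(\param_{n}, \bfY) - \bar S_{\tau_{n+1}}^{T_{n}}(\param_{n}, \bfY)}{p}
+
\lpnorm{\bar S_{\tau_{n+1}}^{T_{n}}(\param_{n}, \bfY) - \mapS(\param_n)}{p} \eqsp.
\]
The first term is precisely the quantity bounded in Proposition~\ref{prop:check:lp:control}, whose assumptions coincide with those made here; it is therefore at most $C(N_{n+1}^{-1}+\tau_{n+1}^{-1/2}N_{n+1}^{-1/2})$. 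Since $N_{n+1}\geq 1$ one has $\tau_{n+1}^{-1/2}N_{n+1}^{-1/2}\leq \tau_{n+1}^{-1/2}$, so this term already has the announced form $C(\tau_{n+1}^{-1/2}+N_{n+1}^{-1})$.

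It then remains to control the second term, a deterministic-kernel quantity measuring how fast the block statistic $\bar S_{\tau}^{T}(\param, \bfY)$ reaches its $\tau\to\infty$ limit $\mapS(\param)$ from Theorem~\ref{th:LGN}. Here I would invoke the $\rmL_{p}$-mean control of the BOEM error established in \cite{lecorff:fort:2011}: under A\ref{assum:strong}, A\ref{assum:obs} and A\ref{assum:moment:sup}-($\bar p$), that control yields a bound $\lpnorm{\bar S_{\tau}^{T}(\param, \bfY)-\mapS(\param)}{p}\leq C\,\tau^{-1/2}$, uniform in $T\geq 0$ and in $\param\in\paramset$. Taking $\tau=\tau_{n+1}$, $T=T_n$ and using the uniformity in $\param$ to absorb the random argument $\param_n$ (e.g.\ by passing the supremum over $\paramset$ inside the norm, which is licit precisely because the bound holds for every fixed $\param$) gives $\lpnorm{\bar S_{\tau_{n+1}}^{T_{n}}(\param_{n}, \bfY)-\mapS(\param_n)}{p}\leq C\,\tau_{n+1}^{-1/2}$. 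Summing the two contributions proves the proposition.

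The main obstacle lies entirely in the second term; the Monte Carlo part is essentially free once Proposition~\ref{prop:check:lp:control} is available. The $\tau^{-1/2}$ rate is not elementary: it is obtained by writing $\bar S_{\tau}^{T}-\mapS(\param)$ as a normalized sum over the block of nearly-stationary increments and controlling its fluctuations with a Rosenthal/Burkholder-type moment inequality for the mixing observation sequence (assumption A\ref{assum:obs}), the initialization bias being suppressed by the exponential forgetting of the filter guaranteed by the strong mixing condition A\ref{assum:strong}. The genuinely delicate point is the uniformity in $\param$: because $\param_n$ is random and statistically coupled to the block observations through the dependence of $\bfY$, a bound valid only for each fixed $\param$ would not suffice, and one really needs the uniform-in-$\param$ version — which is exactly the ingredient I would import from \cite{lecorff:fort:2011} rather than re-derive here.
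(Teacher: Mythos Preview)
Your proposal is correct and matches the paper's proof essentially line for line: the paper also splits via the triangle inequality at the pivot $\bar S_{\tau_{n+1}}^{T_{n}}(\param_{n},\bfY)$, invokes \cite[Theorem~4.1]{lecorff:fort:2011} (under A\ref{assum:strong}, A\ref{assum:moment:sup}-($\bar p$) and A\ref{assum:obs}) to bound the BOEM term by $C\tau_{n+1}^{-1/2}$, and then applies Proposition~\ref{prop:check:lp:control} for the SMC term. Your additional discussion of the uniformity-in-$\param$ issue is a valid point that the paper leaves implicit in its citation.
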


\begin{proof} 
Under A\ref{assum:strong}, A\ref{assum:moment:sup}-($\bar p$) and A\ref{assum:obs}, by \cite[Theorem~$4.1$]{lecorff:fort:2011}, there exists a constant $C$ s.t.
\[
\lpnorm{\bar S_{\tau_{n+1}}^{T_{n}}(\param_n, \bfY)-\mapS(\param_n)}{p} \leq
\frac{C}{\sqrt{\tau_{n+1}}} \eqsp.
\]
Moreover, under A\ref{assum:dub:lec} and A\ref{assum:moment:w}-($\Delta p$), by Proposition~\ref{prop:check:lp:control}, we have
\begin{equation*}
\normL{p}{\widetilde S_{n}(\param_n, \bfY) - \bar S_{\tau_{n+1}}^{T_n}(\param_n, \bfY)}\leq C\left(\frac{1}{N_{n+1}}+\frac{1}{\tau^{1/2}_{n+1} N_{n+1}^{1/2}}\right)\eqsp,
\end{equation*}
which concludes the proof.
\end{proof}

\begin{proposition}
\label{prop:conv:averaged}
Let $S : \Xset^{2}\times\Yset\longrightarrow \Rset^{d}$ be a measurable
function s.t. A\ref{assum:moment:sup}-($\bar p$) holds for some $ \bar p >2$.
Assume A\ref{assum:strong}, A\ref{assum:obs}-\ref{assum:block-size}, A\ref{assum:dub:lec}-\ref{assum:particle} and A\ref{assum:moment:w}-($\Delta p$) for some $p\in(2,\bar
p)$, where $\Delta
p \eqdef 2\bar p p/(\bar p- p)$. Let $\{\param_n\}_n$ be the P-BOEM sequence. For any
$\param_{\star}\in\paramset$, on the set
$\{\lim_{n}\param_{n}=\param_{\star}\}$,
\[
\widetilde \Sigma_{n} \longrightarrow \mapS (\param_{\star})\eqsp,\quad \ps{\PPim}\eqsp,
\]
where $\mapS$ is defined in \eqref{th:LGN:ergodic} and $\widetilde \Sigma_{n}$ in \eqref{eq:PBonem:averaged}.
\end{proposition}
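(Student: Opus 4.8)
The plan is to split the deviation $\widetilde\Sigma_n-\mapS(\param_\star)$ into a stochastic error that vanishes almost surely (irrespective of the limiting behaviour of $\{\param_n\}$) and a bias term that vanishes only on the event $\{\lim_n\param_n=\param_\star\}$. Using the explicit representation~\eqref{eq:tildeSigma:bis} and the identity $\sum_{j=1}^n\tau_j=T_n$, I would write $\widetilde\Sigma_n-\mapS(\param_\star)=A_n+B_n$ with
\begin{equation*}
A_n\eqdef\frac{1}{T_n}\sum_{j=1}^n\tau_j\left[\widetilde S_{j-1}(\param_{j-1},\bfY)-\mapS(\param_{j-1})\right],\qquad B_n\eqdef\frac{1}{T_n}\sum_{j=1}^n\tau_j\left[\mapS(\param_{j-1})-\mapS(\param_\star)\right].
\end{equation*}

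For the bias term $B_n$, I would argue on the event $\{\lim_n\param_n=\param_\star\}$: the continuity of $\mapS$ established in Theorem~\ref{th:LGN} gives $\mapS(\param_{j-1})\to\mapS(\param_\star)$, and since the weights $\tau_j/T_n$ are nonnegative, sum to one over $1\le j\le n$, and satisfy $\tau_j/T_n\to 0$ for each fixed $j$ (as $T_n\to+\infty$), a Toeplitz averaging argument yields $B_n\to 0$ on this event.

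For the stochastic term $A_n$, the key observation is that the relevant $\rmL_p$ control holds unconditionally, so that $A_n\to 0$ almost surely on the whole space. Applying Proposition~\ref{prop:lpcontrol:sto} with $n\leftarrow j-1$ and then Minkowski's inequality, I would bound
\begin{equation*}
\lpnorm{A_n}{p}\le\frac{C}{T_n}\sum_{j=1}^n\tau_j\left(\frac{1}{\sqrt{\tau_j}}+\frac{1}{N_j}\right)=\frac{C}{T_n}\sum_{j=1}^n\left(\sqrt{\tau_j}+\frac{\tau_j}{N_j}\right).
\end{equation*}
Substituting $\tau_j=\lfloor cj^a\rfloor$ (A\ref{assum:block-size}) and $N_j=\lfloor c\tau_j^d\rfloor$ with $d\ge(a+1)/2a$ (A\ref{assum:particle}), a routine comparison of polynomial sums gives $T_n^{-1}\sum_{j\le n}\sqrt{\tau_j}=O(n^{-a/2})$ and $T_n^{-1}\sum_{j\le n}\tau_j/N_j=O(n^{-ad})$; since $ad\ge(a+1)/2>a/2$, the first term dominates and $\lpnorm{A_n}{p}=O(n^{-a/2})$. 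Finally I would upgrade this to almost sure convergence: by Markov's inequality $\PPim(|A_n|>\epsilon)\le\epsilon^{-p}\lpnorm{A_n}{p}^p=O(n^{-pa/2})$, and because $p>2$ and $a>1$ one has $pa/2>1$, so $\sum_n\PPim(|A_n|>\epsilon)<\infty$; the Borel--Cantelli lemma then gives $A_n\to 0$ almost surely. Combining this with $B_n\to 0$ on $\{\lim_n\param_n=\param_\star\}$ completes the argument.

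The main obstacle is this last upgrade from $\rmL_p$ to almost sure convergence: the bound of Proposition~\ref{prop:lpcontrol:sto} by itself only yields convergence in probability, and the Borel--Cantelli step requires the summability $\sum_n\lpnorm{A_n}{p}^p<\infty$. This is precisely where the interplay between the integrability exponent $p>2$, the block-growth exponent $a>1$, and the lower bound $d\ge(a+1)/2a$ on the number of particles per block is essential; any weakening of these would break the summability and force a more delicate maximal-inequality argument.
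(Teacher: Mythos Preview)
Your proof is correct and follows essentially the same route as the paper: the same decomposition into a stochastic part and a bias part, continuity of $\mapS$ plus a Cesaro/Toeplitz argument for the bias, and Proposition~\ref{prop:lpcontrol:sto} together with Borel--Cantelli for the stochastic part. The only minor difference is that the paper applies Borel--Cantelli to the individual increments $\widetilde S_{j-1}(\param_{j-1},\bfY)-\mapS(\param_{j-1})$ (obtaining termwise a.s.\ convergence to zero) and then concludes $A_n\to 0$ by a second use of the Cesaro lemma, whereas you bound $\lpnorm{A_n}{p}$ directly via Minkowski and apply Borel--Cantelli to $A_n$ itself; both arguments are valid under the stated assumptions.
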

\begin{proof}
By \eqref{eq:PBonem:averaged}, $\widetilde\Sigma_{n}$ can be written as
\begin{equation}
\label{eq:Sigmatilde:decomp}
\widetilde\Sigma_{n} =
\frac{1}{T_n} \sum_{j=1}^n \tau_j  \, \left[\widetilde
S_{j-1}(\param_{j-1}, \bfY)-\mapS(\param_{j-1})\right] +  \frac{1}{T_n} \sum_{j=1}^n \tau_j  \, \mapS(\param_{j-1})\eqsp.
\end{equation}
By Theorem~\ref{th:LGN}, $\mapS$ is continuous so, by the Cesaro Lemma, the second term in the right-hand side of \eqref{eq:Sigmatilde:decomp} converges to $\mapS(\param_{\star})$ $\PPim$-a.s., on the set $\{\lim_{n}\param_{n}=\param_{\star}\}$. By Proposition~\ref{prop:lpcontrol:sto},
there exists a constant $C$ such that  for any $n$,
\begin{equation*}
  \lpnorm{\widetilde S_{n}(\param_{n}, \bfY) - \mapS(\param_n)}{p}
\leq  C\left(\frac{1}{\sqrt{\tau_{n+1}}}+\frac{1}{N_{n+1}}\right) \eqsp.
\end{equation*}
Hence, by A\ref{assum:block-size}, A\ref{assum:particle} and the Borel-Cantelli Lemma,
\[
\left|\widetilde S_{n}(\param_{n}, \bfY) -
  \mapS(\param_n)\right|\longrightarrow
0\eqsp,\quad \ps{\PPim}
\]
The proof is concluded by applying the Cesaro Lemma.
\end{proof}

\appendix

\section{Detailed SMC algorithm}
\label{app:alg}
In this section, we give a detailed description of the SMC algorithm used to
compute sequentially the quantities $\widetilde
S_{n}(\param_{n}, \bfY)$, $n\geq 0$. This is the algorithm proposed by~ \cite{cappe:2011} and
\cite{delmoral:doucet:singh:2010a}.

At each time step, the weighted samples are produced using sequential
importance sampling and sampling importance resampling steps. In
Algorithm~\ref{alg:FSMC}, the instrumental proposition kernel used to select
and propagate the particles is $\instrpostaux{t}{}$ (see
\eqref{eq:instrumental-distribution-filtering} and
\cite{douc:garivier:moulines:olsson:2010,doucet:defreitas:gordon:2001,liu:2001}
for further details on this SMC step).

It is readily seen from the description below that the observations $\bfY_t$
are processed sequentially.
\begin{algorithm}[h]
\caption{Forward SMC step}
\label{alg:FSMC}
\begin{algorithmic}
\REQUIRE $\param_n$, $\tau_{n+1}$, $N$,
    $\bfY_{T_n+1:T_n+\tau_{n+1}}\eqsp.$
\ENSURE $\widetilde S_{n}(\param_n, \bfY)\eqsp.$
  \STATE Sample $\{\xi_{0}^{\ell}\}_{\ell = 1}^{N}$ i.i.d. with distribution $\chi\eqsp.$
  \STATE Set $\omega_{0}^{\ell} = 1/N$ for all $\ell\in\{1,\dots,N\}\eqsp.$
  \STATE Set $R_{0,\param_n}^{\ell} = 0$ for all $\ell\in\{1,\dots,N\}\eqsp.$
  \FOR {$t=1$ to $\tau_{n+1}$} 
  	\FOR{$\ell=1$ to $N$}
	\STATE Conditionally to $(\param_n, Y_{T_n+1:T_n+t}, \{J_{t-1}^{\ell}, \epart{t-1}{\ell}
      \}_{\ell=1}^{N})$, sample independently $(J_t^{\ell},
      \epart{t}{\ell})\sim \instrpostaux{t}{}(i, \rmd x)\eqsp,$
      where $\instrpostaux{t}{}(i, \rmd x) \propto \ewght{t-1}{i} \adjfunc{t}{}{\epart{t-1}{i}} \kiss{t}{}(\epart{t-1}{i},x)\lambda(\rmd x) \eqsp.$
\STATE Set
\[
\ewght{t}{\ell} = \frac{\m_{\param_n}(\epart{t-1}{J_t^{\ell}},\epart{t}{\ell}) g_{\param_n}(\epart{t}{\ell},\bfY_{T_n+t})}{\adjfunc{t}{}{\epart{t-1}{J_t^{\ell}}} \kiss{t}{}(\epart{t-1}{J_t^{\ell}},\epart{t}{\ell})} \eqsp.
\]
\STATE Set
\[
R_{t,\param_n}^{\ell} = \frac{1}{t}\sum_{j=1}^{N}\ewght{t-1}{j}m_{\param_n}(\epart{t-1}{j},\epart{t}{\ell})\frac{ S(\epart{t-1}{j},\epart{t}{\ell},\bfY_{T_n+t})+ (t-1) R_{t-1,\param_n}^{j}}{\sum_{k=1}^{N}\ewght{t-1}{k}\m_{\param_n}(\epart{t-1}{k},\epart{t}{\ell})}
\eqsp.
\]
\ENDFOR
\ENDFOR
\STATE Set 
\[
\widetilde S_{n}(\param_n, \bfY) = \sum_{\ell=1}^{N}\ewght{\tau_{n+1}}{\ell} R_{\tau_{n+1},\param_n}^{\ell}\eqsp.
\]
\end{algorithmic}
\end{algorithm}

\newpage

\section{$\rmL_{p}$-controls of SMC approximations}
\label{app:extended:version}

In this section, we give further
  details on the $\rmL_{p}$ control on each block (see~~(\ref{eq:diff:sto})):
\[
\mathbb{E}_{}\left[\left\vert \widetilde S_{\tau_{n+1}}^{N,T_{n}}(\param_{n}, \bfY) - \bar S_{\tau_{n+1}}^{T_{n}}(\param_{n}, \bfY)\right\vert^{p}\right]\eqsp,
\]
$\bar S_{\tau}^{T}$ is defined by \eqref{eq:rewrite:barS} (we recall that,
$\chi$ being fixed, it is dropped from the notations) and $\widetilde
S_\tau^{N,T}$ is the SMC approximation of $\bar S_{\tau}^{T}$ based on $N$
particles computed as described in Section~\ref{subsec:BOEM:description:SMC}.

The following results are technical lemmas taken from \cite{douc:garivier:moulines:olsson:2010} (stated here for a better clarity) or extensions of the $\rmL_{p}$ controls derived in \cite{dubarry:lecorff:2011}.

\bigskip

Hereafter, ``time $t$'' corresponds to time $t$ in the block
  $n$. Therefore, even if it is not explicit in the notations (in order to make
  them simpler), the following quantities depend upon the observations
  $\bfY_{T_n+1:T_n+\tau_{n+1}}$.

\medskip 

Denote by $\filt{s}^{\param}$ the filtering distribution at
  time $s$, and let
\[
\BK{\filt{t}^{\param}}^{\param}(x, \rmd x') \eqdef \frac{m_\param(x',x)}{\int
  m_\param(u,x) \filt{t}^\param(\rmd u)}\filt{t}^\param(\rmd x')
\]
be the backward kernel smoothing kernel at time $t+1$. For all $0 \leq s \leq
\tau-1$ and for all bounded measurable function $h$ on $\Xset^{\tau-s+1}$,
define recursively $\post{s:\tau}{\tau}^\param[h]$ backward in time, according
to
\begin{equation} \label{eq:smoothing:backw_decomposition_recursion}
    \post{s:\tau}{\tau}^{\param}[h] = \idotsint \BK{\filt{s}^{\param}}^{\param}(x_{s+1}, \rmd x_s) \, \post{s+1:\tau}{\tau}^{\param}(\rmd \chunk{x}{s+1}{\tau}) \, h(\chunk{x}{s}{\tau}) \eqsp,
\end{equation}
starting from $\post{\tau:\tau}{\tau}^{\param}
  =\filt{\tau}^{\param}$. By convention, $\filt{0}^\param = \chi$.

\medskip

For $t \geq 1$, let $\left\{ (\epart{t}{\ell},
    \ewght{t}{\ell})\right\}_{\ell = 1}^{N}$ be the weighted samples obtained as
  described in Section~\ref{subsec:BOEM:description:SMC} (see also
  Algorithm~\ref{alg:FSMC} in Appendix~\ref{app:alg}); it approximates the
  filtering distribution $\filt{t}^\param$. Denote by $\filt{t}^{N,\param}$
  this approximation. For $0 \leq s
\leq \tau-1$, an approximation of the backward kernel can be obtained
$$
\BK{\filt{s}^{N,\param}}(x, h) = \sum_{i = 1}^{N} \frac{\ewght{s}{i}
  \m_{\param}(\epart{s}{i}, x)}{\sum_{\ell = 1}^{N} \ewght{s}{\ell}
  \m_{\param}(\epart{s}{\ell},x)} h\left( \epart{s}{i} \right) \eqsp;
$$
and inserting this expression into \eqref{eq:smoothing:backw_decomposition_recursion} gives the following particle approximation of the fixed-interval smoothing distribution $\post{0:\tau}{\tau}^{\param}[h]$
\begin{equation} \label{eq:forward-filtering-backward-smoothing}
    \post{0:\tau}{\tau}^{N,\param}[h] = \sum_{i_0 = 1}^{N} \dots \sum_{i_{\tau} = 1}^{N}
    \left(\prod_{u=1}^{\tau} \frac{\ewght{u-1}{i_{u-1}} \m_{\param}(\epart{u-1}{i_{u-1}},\epart{u}{i_u})}{\sum_{\ell=1}^{N} \ewght{u-1}{\ell} \m_{\param}(\epart{u-1}{\ell},\epart{u}{i_u})}\right)  \times \frac{\ewght{\tau}{i_{\tau}}}{\sumwght[n]{\tau}} h\left(\epart{0}{i_0}, \dots, \epart{\tau}{i_{\tau}}\right) \eqsp,
\end{equation}
with $\sumwght[N]{\tau}\eqdef\sum_{\ell=1}^{N}\ewght{\tau}{\ell}$. 
\begin{lemma}
\label{lem:DeltaStoDeltaPhi}
Let $\left\{ (\epart{t}{\ell}, \ewght{t}{\ell}), 1 \leq \ell \leq N, 0 \leq t
  \leq \tau_{n+1} \right\}$ be the weighted samples obtained by
  Algorithm~\ref{alg:FSMC} in Appendix~\ref{app:alg}, with
  $\param_n, \tau_{n+1}$, $N$, $\bfY_{T_n+1:T_n+\tau_{n+1}}$. Then,
\begin{multline}
\label{eq:rewrite:smerr}
\left[\widetilde S_{\tau_{n+1}}^{N,T_n}(\param_n, \bfY)-\bar S_{\tau_{n+1}}^{T_n}(\param_n, \bfY)\right] \\= \frac{1}{\tau_{n+1}} 
\left(\phi^{N,\param_n}_{0:\tau_{n+1}|\tau_{n+1}}\left[\addfunc{\tau_{n+1}}\right]-\phi_{0:\tau_{n+1}|\tau_{n+1}}^{\param_n}\left[\addfunc{\tau_{n+1}}\right]  \right) \eqsp,
\end{multline}
where
\begin{equation}
  \addfunc{\tau}(x_{0:\tau}) \eqdef  \sum_{s=1}^{\tau}S(x_{s-1},x_{s},\bfY_{s+T})\eqsp.  \label{eq:addfuncbis} 
\end{equation}
\end{lemma}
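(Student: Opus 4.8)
The plan is to recognize both the exact statistic $\bar S_{\tau_{n+1}}^{T_n}(\param_n, \bfY)$ and its particle counterpart $\widetilde S_{\tau_{n+1}}^{N,T_n}(\param_n, \bfY)$ as smoothing expectations of the single additive functional $\addfunc{\tau_{n+1}}$ of (\ref{eq:addfuncbis}) --- the exact smoother for the former and the particle FFBS smoother for the latter. Concretely, I would first establish the two identities
\begin{equation*}
\bar S_{\tau_{n+1}}^{T_n}(\param_n, \bfY) = \frac{1}{\tau_{n+1}}\, \phi_{0:\tau_{n+1}|\tau_{n+1}}^{\param_n}\left[\addfunc{\tau_{n+1}}\right]\eqsp, \qquad \widetilde S_{\tau_{n+1}}^{N,T_n}(\param_n, \bfY) = \frac{1}{\tau_{n+1}}\, \phi_{0:\tau_{n+1}|\tau_{n+1}}^{N,\param_n}\left[\addfunc{\tau_{n+1}}\right]\eqsp,
\end{equation*}
and then subtract them to obtain (\ref{eq:rewrite:smerr}).

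The exact identity is a matter of definitions. By linearity, $\phi_{0:\tau_{n+1}|\tau_{n+1}}^{\param_n}[\addfunc{\tau_{n+1}}] = \sum_{s=1}^{\tau_{n+1}} \phi_{0:\tau_{n+1}|\tau_{n+1}}^{\param_n}[S(x_{s-1},x_s,\bfY_{s+T_n})]$, and each marginal on the right is, after the index shift $s = t-T_n$, exactly the term $\smoothfunc{\chi,T_n}{\param_n,t,T_n+\tau_{n+1}}(S, \bfY)$ appearing in the definition (\ref{eq:rewrite:barS}) of $\bar S$. Here I use that the joint block smoother, written in the local coordinates $0,\dots,\tau_{n+1}$ with $\filt{0}^{\param_n}=\chi$, coincides with (\ref{eq:define-Phi}) for $r=T_n$ and upper index $T_n+\tau_{n+1}$. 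Summing over $t$ and dividing by $\tau_{n+1}$ gives the first identity.

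The particle identity carries the actual content. I would prove by induction on $t$ that $t\,R_{t,\param_n}^{\ell}$ is the forward-only FFBS statistic for the partial functional $x_{0:t}\mapsto \sum_{s=1}^{t} S(x_{s-1},x_s,\bfY_{s+T_n})$; equivalently,
\begin{equation*}
\sum_{\ell=1}^{N}\frac{\ewght{t}{\ell}}{\sumwght[N]{t}}\, t\, R_{t,\param_n}^{\ell} = \phi_{0:t|t}^{N,\param_n}\left[\sum_{s=1}^{t}S(x_{s-1},x_s,\bfY_{s+T_n})\right]\eqsp.
\end{equation*}
The base case $t=0$ holds since $R_{0,\param_n}^{\ell}=0$. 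For the inductive step, multiply the update of $R_{t,\param_n}^{\ell}$ in Algorithm~\ref{alg:FSMC} by $t$: the factors $\tfrac1t$ and $\tfrac{t-1}{t}$ rescale the running average $R$ into the running sum $t R_{t,\param_n}^{\ell}$, and the resulting recursion
\begin{equation*}
t R_{t,\param_n}^{\ell} = \sum_{j=1}^{N} \frac{\ewght{t-1}{j}\, m_{\param_n}(\epart{t-1}{j},\epart{t}{\ell})}{\sum_{k=1}^{N} \ewght{t-1}{k}\, m_{\param_n}(\epart{t-1}{k},\epart{t}{\ell})}\left[S(\epart{t-1}{j},\epart{t}{\ell},\bfY_{t+T_n}) + (t-1)\,R_{t-1,\param_n}^{j}\right]\eqsp
\end{equation*}
is exactly the forward pass induced by the particle backward kernel $\BK{\filt{t-1}^{N,\param_n}}$ inserted into (\ref{eq:smoothing:backw_decomposition_recursion}), i.e. the specialization of (\ref{eq:forward-filtering-backward-smoothing}) to an additive functional. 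Plugging in the inductive hypothesis $(t-1)R_{t-1,\param_n}^{j}$ closes the step. Evaluating at $t=\tau_{n+1}$ and using the terminal (normalized) weights, $\sum_{\ell}\ewght{\tau_{n+1}}{\ell}R_{\tau_{n+1},\param_n}^{\ell} = \tau_{n+1}^{-1}\phi_{0:\tau_{n+1}|\tau_{n+1}}^{N,\param_n}[\addfunc{\tau_{n+1}}]$, which is the algorithm's output and the second identity.

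The main obstacle is the induction above: one must verify that the self-normalized recursion of Algorithm~\ref{alg:FSMC} reproduces the particle FFBS marginalization exactly, carefully tracking the normalizing constants $\sumwght[N]{t}$ at each step and confirming that the $\tfrac1t,\tfrac{t-1}{t}$ convex weights in (\ref{eq:SMCapprox:tool2}) convert the running average into the unnormalized sum defining $\addfunc{}$. Once this algebraic identity is in place, subtracting the two smoothing representations yields (\ref{eq:rewrite:smerr}) directly, and the remaining steps are purely definitional.
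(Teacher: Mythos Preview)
The paper does not supply a proof of this lemma: it is stated and then immediately followed by further definitions, the identity being treated as an immediate consequence of the forward-only FFBS construction of Section~\ref{subsec:BOEM:description:SMC} (Eqs.~(\ref{eq:SMCapprox:tool1})--(\ref{eq:SMCapprox:tool3}), whose proof is deferred to \cite{cappe:2011} and \cite{delmoral:doucet:singh:2010a}) together with the definition~(\ref{eq:forward-filtering-backward-smoothing}) of $\phi^{N,\param}_{0:\tau|\tau}$.

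Your proposal correctly supplies the details the paper omits. The first identity is indeed purely definitional once one matches the block-local indexing of $\phi_{0:\tau_{n+1}|\tau_{n+1}}^{\param_n}$ with the global indexing in~(\ref{eq:define-Phi}) and~(\ref{eq:rewrite:barS}). The second identity is the substantive step, and your induction on $t$ is exactly what is needed: multiplying the Algorithm~\ref{alg:FSMC} update by $t$ turns the running average $R_{t,\param_n}^\ell$ into the cumulative sum, and the self-normalized backward weights $\ewght{t-1}{j}m_{\param_n}(\epart{t-1}{j},\epart{t}{\ell})/\sum_k\ewght{t-1}{k}m_{\param_n}(\epart{t-1}{k},\epart{t}{\ell})$ are precisely $\BK{\filt{t-1}^{N,\param_n}}(\epart{t}{\ell},\{\epart{t-1}{j}\})$, so the recursion reproduces the particle version of~(\ref{eq:smoothing:backw_decomposition_recursion}) applied to the additive functional. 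One small point worth making explicit in your write-up: the terminal output $\sum_\ell \ewght{\tau_{n+1}}{\ell} R_{\tau_{n+1},\param_n}^\ell$ in Algorithm~\ref{alg:FSMC} must be read with normalized weights (i.e.\ divided by $\sumwght[N]{\tau_{n+1}}$) to match~(\ref{eq:forward-filtering-backward-smoothing}); you note this in passing, but it is the one place where the algorithm's pseudocode and the FFBS formula need reconciling.
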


\bigskip

For all $t\in\{0,\dots,\tau\}$ and all bounded measurable function $h$ on
$\Xset^{\tau+1}$, define the kernel $\rmL_{t,\tau}:\Xset^{t+1}\times
\sigmaX^{\otimes \tau+1}\rightarrow [0,1]$ by
\begin{equation}\label{eq:ldroit}
\rmL_{t,\tau}^{\param}h(x_{0:t})\eqdef \int \prod_{u=t+1}^{\tau}\m_{\param}(x_{u-1},x_u)g_{\param}(x_u,\bfY_{u+T})h(x_{0:\tau})\lambda(\rmd x_{t+1:\tau})\eqsp;
\end{equation}
by convention, $\rmL_{\tau,\tau}^{\param}h =h$. Let
$\mathcal{L}_{t,\tau}^{N,\param}$ and $\mathcal{L}_{t,\tau}^{\param}$ be two
kernels on $\Xset \times \sigmaX^{\otimes (\tau+1)}$ defined for all
$x_t\in\Xset$ by
\begin{align}
  \mathcal{L}_{t,\tau}^{\param}h(x_t) &\eqdef
  \int\rmB_{\phi_{t-1}^{\param}}^{\param}(x_t,\rmd
  x_{t-1})\cdots\rmB_{\phi_{0}^{\param}}^{\param}(x_1,\rmd
  x_{0})\rmL_{t,\tau}^{\param}h(x_{0:t}) \label{eq:defL}
  \\\mathcal{L}_{t,\tau}^{N,\param}h(x_t) &\eqdef
  \int\rmB_{\phi_{t-1}^{N,\param}}^{\param}(x_t,\rmd
  x_{t-1})\cdots\rmB_{\phi_{0}^{N,\param}}^{\param}(x_1,\rmd
  x_{0})\rmL_{t,\tau}^{\param}h(x_{0:t})\label{eq:defLN}\eqsp.
\end{align}
Note that
\begin{equation}\label{eq:lcal}
\mathcal{L}_{t,\tau}^{\param}\1(x_t) =\int m_\param(x_t,x') g_\param(x',\bfY_{T+t+1}) \ 
    \mathcal{L}_{t+1,\tau}^{\param} \1(x') \lambda(\rmd x') \eqsp.
\end{equation}

Lemma~\ref{lem:rew:err}, Proposition~\ref{prop:DeltaStoC-D}, Lemma~\ref{lem:esperancecond} and~\ref{Lem:Upperbounds} can be found in \cite{douc:garivier:moulines:olsson:2010}.
\begin{lemma}
\label{lem:rew:err}
  Let $\left\{ (\epart{t}{\ell}, \ewght{t}{\ell}), 1 \leq \ell \leq N, 0 \leq t
    \leq \tau_{n+1} \right\}$ be the weighted samples obtained by
    Algorithm~\ref{alg:FSMC} in Appendix~\ref{app:alg}, with 
    $\param_n, \tau_{n+1}$, $N$, $\bfY_{T_n+1:T_n+\tau_{n+1}}$. Then,
\begin{equation}\label{Eq:Err}
    \phi^{N,\param_n}_{0:\tau_{n+1}|\tau_{n+1}}\left[h\right]-\phi_{0:\tau_{n+1}|\tau_{n+1}}^{\param_n}\left[h\right]
    = \sum_{t=0}^{\tau_{n+1}}\frac{\sum_{\ell=1}^{N}\ewght{t}{\ell} \  
      G_{t,\tau_{n+1}}^{N,\param_n}h(\epart{t}{\ell})}{\sum_{\ell=1}^{N}\ewght{t}{\ell} \ 
      \mathcal{L}_{t,\tau_{n+1}}^{\param_n}\1(\epart{t}{\ell})}\eqsp,
\end{equation}
with  $G_{t,\tau}^{N,\param}$ is a kernel on $\Xset \times \sigmaX^{\otimes (\tau+1)}$ defined, for all $x\in\Xset$ and all bounded and measurable function $h$ on $\Xset^{\tau+1}$, by
\begin{equation}
\label{eq:defG}
G_{t,\tau}^{N,\param}h(x)\eqdef \mathcal{L}_{t,\tau}^{N,\param}h(x) - \frac{\phi_{t-1}^{N,\param}[\mathcal{L}_{t-1,\tau}^{N,\param}h]}{\phi_{t-1}^{N,\param}[\mathcal{L}_{t-1,\tau}^{N,\param}\1]}\mathcal{L}_{t,\tau}^{N,\param}\1(x)\eqsp.
\end{equation}
\end{lemma}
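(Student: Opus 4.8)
The plan is to reproduce the telescoping (Del Moral-type) decomposition of the smoothing error over the time steps $0,\dots,\tau_{n+1}$ at which the particle approximation is introduced, exactly as in \cite{douc:garivier:moulines:olsson:2010}; for clarity I drop the block index and write $\param,\tau$. The first ingredient is a pair of ratio representations. For the true model, unfolding the backward recursion \eqref{eq:smoothing:backw_decomposition_recursion} and using the definition \eqref{eq:ldroit} of $\rmL_{t,\tau}^{\param}$, which integrates the future coordinates $x_{t+1:\tau}$, one checks that for every $t\in\{0,\dots,\tau\}$
\[
\phi_{0:\tau|\tau}^{\param}[h] = \frac{\phi_t^{\param}\!\left[\mathcal{L}_{t,\tau}^{\param}h\right]}{\phi_t^{\param}\!\left[\mathcal{L}_{t,\tau}^{\param}\1\right]}\eqsp,
\]
the point being that the backward kernels in \eqref{eq:defL} reconstruct the joint law $\phi_{0:t|t}^{\param}$ from the filter $\phi_t^{\param}$, while $\phi_t^{\param}[\mathcal{L}_{t,\tau}^{\param}\1]$ supplies the correct normalization. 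On the particle side, since $\rmL_{\tau,\tau}^{\param}h=h$ and the backward kernels are Markov, one has $\mathcal{L}_{\tau,\tau}^{N,\param}\1\equiv\1$ and $\mathcal{L}_{\tau,\tau}^{N,\param}h$ equal to the iterated particle backward kernel applied to $h$; comparing with \eqref{eq:forward-filtering-backward-smoothing} then gives $\phi_{0:\tau|\tau}^{N,\param}[h] = \sum_{\ell}\ewght{\tau}{\ell}\mathcal{L}_{\tau,\tau}^{N,\param}h(\epart{\tau}{\ell})/\sumwght[N]{\tau}$.

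Next I would introduce the interpolating sequence
\[
r_t \eqdef \frac{\sum_{\ell}\ewght{t}{\ell}\mathcal{L}_{t,\tau}^{N,\param}h(\epart{t}{\ell})}{\sum_{\ell}\ewght{t}{\ell}\mathcal{L}_{t,\tau}^{N,\param}\1(\epart{t}{\ell})}\eqsp,\qquad t\in\{0,\dots,\tau\}\eqsp,
\]
together with the convention $r_{-1}\eqdef\phi_{0:\tau|\tau}^{\param}[h]$, which is consistent with the $t=0$ instance of the true ratio above (recall $\phi_0^{\param}=\chi$ and $\mathcal{L}_{0,\tau}^{\param}=\rmL_{0,\tau}^{\param}$). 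By the two boundary identifications of the previous paragraph, $r_\tau = \phi_{0:\tau|\tau}^{N,\param}[h]$ and $r_{-1}=\phi_{0:\tau|\tau}^{\param}[h]$, so the error is the telescoping sum $\phi_{0:\tau|\tau}^{N,\param}[h]-\phi_{0:\tau|\tau}^{\param}[h] = \sum_{t=0}^{\tau}(r_t-r_{t-1})$.

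It then remains to identify each increment with the summand in \eqref{Eq:Err}. By the definition \eqref{eq:defG} of $G_{t,\tau}^{N,\param}$ one has $G_{t,\tau}^{N,\param}h = \mathcal{L}_{t,\tau}^{N,\param}h - r_{t-1}\,\mathcal{L}_{t,\tau}^{N,\param}\1$, whence
\[
\sum_{\ell}\ewght{t}{\ell}G_{t,\tau}^{N,\param}h(\epart{t}{\ell}) = \left(\sum_{\ell}\ewght{t}{\ell}\mathcal{L}_{t,\tau}^{N,\param}\1(\epart{t}{\ell})\right)(r_t-r_{t-1})\eqsp,
\]
that is, $r_t-r_{t-1}$ equals the claimed ratio but with $\mathcal{L}_{t,\tau}^{N,\param}\1$ in the denominator. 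To recover the stated form I would finally observe, from \eqref{eq:ldroit}, that $\rmL_{t,\tau}^{\param}\1$ depends on $x_{0:t}$ only through $x_t$; since the particle backward kernels are Markov they leave such a function unchanged, so $\mathcal{L}_{t,\tau}^{N,\param}\1 = \mathcal{L}_{t,\tau}^{\param}\1$ and the denominator becomes $\sum_{\ell}\ewght{t}{\ell}\mathcal{L}_{t,\tau}^{\param}\1(\epart{t}{\ell})$, as in \eqref{Eq:Err}.

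The bulk of the argument — the telescoping and the algebraic identification of the increments — is routine once the right sequence $r_t$ is in place. I expect the main obstacle to be the boundary bookkeeping at $t=0$: fixing the conventions for $\phi_{-1}^{N,\param}$ and $\mathcal{L}_{-1,\tau}^{N,\param}$ so that $r_{-1}$ is \emph{exactly} the true smoothing functional, and rigorously justifying the two ratio representations by unfolding \eqref{eq:smoothing:backw_decomposition_recursion} and the FFBS weights in \eqref{eq:forward-filtering-backward-smoothing} without dropping a normalizing constant.
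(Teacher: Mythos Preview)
Your proposal is correct and follows essentially the same telescoping decomposition as the paper's proof: introduce the interpolating ratios $r_t=\phi_t^{N,\param}[\mathcal{L}_{t,\tau}^{N,\param}h]/\phi_t^{N,\param}[\mathcal{L}_{t,\tau}^{N,\param}\1]$, telescope, and identify each increment with $\phi_t^{N,\param}[G_{t,\tau}^{N,\param}h]/\phi_t^{N,\param}[\mathcal{L}_{t,\tau}^{N,\param}\1]$. The paper phrases the telescoping through $\phi_{0:t|t}^{N,\param}[\rmL_{t,\tau}^{\param}h]$ first and then uses the identity $\phi_{0:t|t}^{N,\param}[\rmL_{t,\tau}^{\param}h]=\phi_t^{N,\param}[\mathcal{L}_{t,\tau}^{N,\param}h]$, but this is the same computation; your additional remark that $\mathcal{L}_{t,\tau}^{N,\param}\1=\mathcal{L}_{t,\tau}^{\param}\1$ (since $\rmL_{t,\tau}^{\param}\1$ depends only on $x_t$) makes explicit a point the paper leaves implicit when writing the denominator in \eqref{Eq:Err}.
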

\begin{proof}
  By definition of $\rmL_{t,\tau}^{\param}$,
\begin{equation*}
\phi_{0:\tau|\tau}^{\param}[h] = \frac{\phi_{0:t|t}^{\param}\left[\rmL_{t,\tau}^{\param}h\right]}{\phi_{0:t|t}^{\param}\left[\rmL_{t,\tau}^{\param}\1\right]}\eqsp.
\end{equation*}
We write
\[
\phi^{N,\param}_{0:\tau|\tau}\left[h\right]-\phi_{0:\tau|\tau}^{\param}\left[h\right]
= \sum_{t=0}^{\tau} \left\{
  \frac{\phi_{0:t|t}^{N,\param}\left[\rmL_{t,\tau}^{\param}h\right]}{\phi_{0:t|t}^{N,\param}\left[\rmL_{t,\tau}^{\param}\1\right]}-\frac{\phi_{0:t-1|t-1}^{N,\param}\left[\rmL_{t-1,\tau}^{\param}h\right]}{\phi_{0:t-1|t-1}^{N,\param}\left[\rmL_{t-1,\tau}^{\param}\1\right]}
\right \} \nonumber\eqsp,
\]
where we used the convention
\begin{equation*}
\frac{\phi_{0:-1|-1}^{N,\param}\left[\rmL_{-1,\tau}^{\param}h\right]}{\phi_{0:-1|-1}^{N,\param}\left[\rmL_{-1,\tau}^{\param}\1\right]} = \frac{\chi\left[\rmL_{0,\tau}^{\tau}h\right]}{\chi\left[\rmL_{0,\tau}^{\param}\1\right]} = \phi_{0:\tau|\tau}^{\param}[h]\eqsp.
\end{equation*}
We have for all $0\leq t\leq \tau$,
 \[   \phi_{0:t|t}^{N,\param}\left[\rmL_{t,\tau}h\right]
   =\int\phi_{t}^{N,\param}(\rmd x_{t}) \,  \prod_{j=0}^{t-1}  \rmB_{\phi_{j}^{N,\param}}(x_{j+1},\rmd
   x_{j})  \  \rmL_{t,\tau}^{\param}h(x_{0:t}) = \phi_{t}^{N,\param}[\mathcal{L}_{t,\tau}^{N,\param}h]\eqsp.
 \]
Therefore, for all $1\leq t \leq \tau$,
\[\frac{\phi_{0:t|t}^{N,\param}[\rmL_{t,\tau}^{\param}h]}{\phi_{0:t|t}^{N,\param}[\rmL_{t,\tau}^{\param}\1]}-\frac{\phi_{0:t-1|t-1}^{N,\param}[\rmL_{t-1,\tau}^{\param}h]}{\phi_{0:t-1|t-1}^{N,\param}[\rmL_{t-1,\tau}^{\param}\1]}
  =\frac{\phi_{t}^{N,\param}[\mathcal{L}_{t,\tau}^{N,\param}h]}{\phi_{t}^{N,\param}[\mathcal{L}_{t,\tau}^{N,\param}\1]}
  -
  \frac{\phi_{t-1}^{N,\param}[\mathcal{L}_{t-1,\tau}^{N,\param}h]}{\phi_{t-1}^{N,\param}[\mathcal{L}_{t-1,\tau}^{N,\param}\1]}
  =\frac{\phi_{t}^{N,\param}[G_{t,\tau}^{N,\param}h
    ]}{\phi_{t}^{N,\param}[\mathcal{L}_{t,\tau}^{N,\param}\1]}\ \eqsp.
\]
\end{proof}
 
\begin{proposition}
\label{prop:DeltaStoC-D}
 Let $\left\{ (\epart{t}{\ell}, \ewght{t}{\ell}), 1 \leq \ell \leq N, 0 \leq t
    \leq \tau_{n+1} \right\}$ be  the weighted samples obtained by Algorithm~\ref{alg:FSMC} in
  Appendix~\ref{app:alg}, with input variables $\param_n, \tau_{n+1}$, $N$,
  $\bfY_{T_n+1:T_n+\tau_{n+1}}$.  Then, 
\begin{multline*}
\left[\widetilde S_{\tau_{n+1}}^{N,T_n}(\param_n, \bfY)-\bar S_{\tau_{n+1}}^{T_n}(\param_n, \bfY)\right]  =  \frac{1}{\tau_{n+1}}\sum_{t=0}^{\tau_{n+1}}D_{t,\tau_{n+1}}^{N,\param_n}(\addfunc{\tau_{n+1}})\\
+  \frac{1}{\tau_{n+1}}\sum_{t=0}^{\tau_{n+1}}C_{t,\tau_{n+1}}^{N,\param_n}(\addfunc{\tau_{n+1}})\eqsp,
\end{multline*}
where  $\addfunc{\tau}$ is given by (\ref{eq:addfuncbis}) and
\begin{align}
\label{eq:defD}
& D_{t,\tau_{n+1}}^{N,\param_n}(h) \eqdef
\frac{\phi_{t-1}^{N,\param_n}[\adjfunc{t}{}{}]}{\phi_{t-1}^{N,\param_n}\left[\frac{\mathcal{L}_{t-1,\tau_{n+1}}^{\param_n}\1}{|\mathcal{L}_{t,\tau_{n+1}}^{\param_n}\1|_{\infty}}\right]}N^{-1}\sum_{\ell=1}^{N}\ewght{t}{\ell}
\frac{G_{t,\tau_{n+1}}^{N,\param_n}h(\epart{t}{\ell})}{|\mathcal{L}_{t,\tau_{n+1}}^{\param_n}\1|_{\infty}}\eqsp;  \\
& C_{t,\tau_{n+1}}^{N}(h) \eqdef \left[\frac{1}{N^{-1}\sum_{i=1}^{N}\ewght{t}{i} \frac{\mathcal{L}_{t,\tau_{n+1}}^{\param_n}\1(\epart{t}{i})}{|\mathcal{L}_{t,\tau_{n+1}}^{\param_n}\1|_{\infty}}}-\frac{\phi_{t-1}^{N,\param_n}[\adjfunc{t}{}{}]}{\phi_{t-1}^{N,\param_n}\left[\frac{\mathcal{L}_{t-1,\tau_{n+1}}^{\param_n}\1}{|\mathcal{L}_{t,\tau_{n+1}}^{\param_n}\1|_{\infty}}\right]}\right] \label{eq:defC}\\
&\hspace{7cm}\times N^{-1}\sum_{\ell=1}^{N}\ewght{t}{\ell} \frac{G_{t,\tau_{n+1}}^{N,\param_n}h(\epart{t}{\ell})}{|\mathcal{L}_{t,\tau_{n+1}}^{\param_n}\1|_{\infty}}
\eqsp.\nonumber
\end{align}
\end{proposition}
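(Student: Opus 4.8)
The plan is to derive the decomposition from the two preceding lemmas by a purely algebraic add-and-subtract split performed term by term. First I would chain Lemma~\ref{lem:DeltaStoDeltaPhi}, which expresses the left-hand side as $\tau_{n+1}^{-1}(\phi^{N,\param_n}_{0:\tau_{n+1}|\tau_{n+1}}[\addfunc{\tau_{n+1}}]-\phi_{0:\tau_{n+1}|\tau_{n+1}}^{\param_n}[\addfunc{\tau_{n+1}}])$, with Lemma~\ref{lem:rew:err} applied to $h=\addfunc{\tau_{n+1}}$, which expands the bracket as a sum over $t$. This gives
\[
\left[\widetilde S_{\tau_{n+1}}^{N,T_n}(\param_n, \bfY)-\bar S_{\tau_{n+1}}^{T_n}(\param_n, \bfY)\right]=\frac{1}{\tau_{n+1}}\sum_{t=0}^{\tau_{n+1}}\frac{\sum_{\ell=1}^{N}\ewght{t}{\ell}\,G_{t,\tau_{n+1}}^{N,\param_n}\addfunc{\tau_{n+1}}(\epart{t}{\ell})}{\sum_{\ell=1}^{N}\ewght{t}{\ell}\,\mathcal{L}_{t,\tau_{n+1}}^{\param_n}\1(\epart{t}{\ell})}\eqsp.
\]

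Next I would normalize each summand by dividing numerator and denominator by the positive constant $N\,|\mathcal{L}_{t,\tau_{n+1}}^{\param_n}\1|_{\infty}$, writing the generic term as $A_t/B_t$, where $A_t\eqdef N^{-1}\sum_{\ell=1}^{N}\ewght{t}{\ell}\,G_{t,\tau_{n+1}}^{N,\param_n}\addfunc{\tau_{n+1}}(\epart{t}{\ell})/|\mathcal{L}_{t,\tau_{n+1}}^{\param_n}\1|_{\infty}$ is exactly the factor common to both $D_{t,\tau_{n+1}}^{N,\param_n}$ and $C_{t,\tau_{n+1}}^{N,\param_n}$, and $B_t\eqdef N^{-1}\sum_{\ell=1}^{N}\ewght{t}{\ell}\,\mathcal{L}_{t,\tau_{n+1}}^{\param_n}\1(\epart{t}{\ell})/|\mathcal{L}_{t,\tau_{n+1}}^{\param_n}\1|_{\infty}$. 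Denoting by $W_t$ the $\mathcal{F}_{n,t-1}^{N_{n+1}}$-measurable ratio $\phi_{t-1}^{N,\param_n}[\adjfunc{t}{}{}]/\phi_{t-1}^{N,\param_n}[\mathcal{L}_{t-1,\tau_{n+1}}^{\param_n}\1/|\mathcal{L}_{t,\tau_{n+1}}^{\param_n}\1|_{\infty}]$, the whole content of the proof is the trivial identity $A_t/B_t=W_t A_t+(B_t^{-1}-W_t)A_t$; by the definitions \eqref{eq:defD} and \eqref{eq:defC} the first summand is $D_{t,\tau_{n+1}}^{N,\param_n}(\addfunc{\tau_{n+1}})$ and the second is $C_{t,\tau_{n+1}}^{N,\param_n}(\addfunc{\tau_{n+1}})$. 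Summing over $t$ and restoring the prefactor $\tau_{n+1}^{-1}$ then gives the claim.

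Since the definitions of $D$ and $C$ are prescribed in the statement, there is no genuine obstacle here: the proposition is bookkeeping that reduces to the one-line add-and-subtract identity above. The only points that deserve a word are the well-definedness of the normalizations: under A\ref{assum:strong} the functions $\mathcal{L}_{t,\tau_{n+1}}^{\param_n}\1$ are bounded and bounded away from zero, so $|\mathcal{L}_{t,\tau_{n+1}}^{\param_n}\1|_{\infty}$ is finite and strictly positive, and since the importance weights $\ewght{t}{\ell}$ are strictly positive the denominators $B_t$ never vanish. The purpose of the split, which motivates the particular choice of $W_t$, is to isolate the term $D_{t,\tau_{n+1}}^{N,\param_n}$ with a \emph{predictable} surrogate $W_t$ for $B_t^{-1}$: because of the centering built into $G_{t,\tau_{n+1}}^{N,\param_n}$, this term behaves like a sum of conditionally centered contributions, whereas $C_{t,\tau_{n+1}}^{N,\param_n}$ collects the fluctuation of $B_t^{-1}$ about $W_t$. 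This is exactly the structure exploited in the subsequent $\rmL_p$ bounds.
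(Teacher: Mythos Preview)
Your proposal is correct and follows essentially the same route as the paper: invoke Lemma~\ref{lem:DeltaStoDeltaPhi} and Lemma~\ref{lem:rew:err}, then perform the algebraic add-and-subtract $A_t/B_t = W_t A_t + (B_t^{-1}-W_t)A_t$ to split each summand into the $D$ and $C$ pieces defined in \eqref{eq:defD}--\eqref{eq:defC}. The paper's proof is just a two-line reference to these lemmas and the observation that \eqref{Eq:Err} can be rewritten as the sum of the $D$'s plus the sum of the $C$'s; your write-up simply makes the arithmetic explicit and adds the (correct) remark on well-definedness of the normalizations.
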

\begin{proof}
  \eqref{Eq:Err} can be rewritten as follows:
\begin{equation}
\label{eq:rewrite:smerrCD}
\phi^{N,\param_n}_{0:\tau_{n+1}|\tau_{n+1}}\left[h\right]-\phi_{0:\tau_{n+1}|\tau_{n+1}}^{\param_n}\left[h\right]  = \sum_{t=0}^{\tau_{n+1}}D_{t,\tau_{n+1}}^{N,\param_n}(h) + \sum_{t=0}^{\tau_{n+1}}C_{t,\tau_{n+1}}^{N,\param_n}(h)\eqsp.
\end{equation}
The proof is concluded by Lemma~\ref{lem:DeltaStoDeltaPhi}.
\end{proof}
For any $t\in\{0,\dots, \tau_{n+1}\}$, we recall the definition of $\mathcal{F}_{n,t}^{N}$ given by \eqref{eq:Filtrations:Fnt}
\begin{equation*}
  \mathcal{F}_{n,t}^{N}= \sigma\left\{\param_n,\bfY_{T_n+1:T_n+t+1},\left(\epart{s}{\ell},\ewght{s}{\ell}\right); \ell\in\{1,\dots,N\}; 0\leq s\leq t\right\}\eqsp,
\end{equation*}
where $\left\{ (\epart{t}{\ell}, \ewght{t}{\ell})\right\}_{\ell = 1}^{N}$ are
the weighted samples obtained by Algorithm~\ref{alg:FSMC} in
Appendix~\ref{app:alg}, with input variables $\param_n, \tau_{n+1}$, $N$,
$\bfY_{T_n+1:T_n+\tau_{n+1}}$. 

\begin{lemma}
\label{lem:esperancecond}
Let $\left\{ (\epart{t}{\ell}, \ewght{t}{\ell}), 1 \leq \ell \leq N, 0 \leq t
  \leq \tau_{n+1} \right\}$ be the weighted samples obtained by
  Algorithm~\ref{alg:FSMC} in Appendix~\ref{app:alg}, with 
  $\param_n, \tau_{n+1}$, $N$, $\bfY_{T_n+1:T_n+\tau_{n+1}}$. Then, for any $1
  \leq t \leq \tau_{n+1}$ and any $1 \leq \ell \leq N$,
\begin{equation}
\label{eq:whExpect}
\CExpparam{\ewght{t}{\ell}h(\epart{t}{\ell})}{\mathcal{F}_{n,t-1}^{N}}{} =\frac{\phi_{t-1}^{N,\param_n}\left[ \int \m_{\param_n}(\cdot,x) g_{\param_n}(x, \bfY_{T_n+t}) \ h(x) \ \lambda(\rmd x)\right]}{\phi_{t-1}^{N,\param_n}[\adjfunc{t}{}{}]}\eqsp.
\end{equation}
\end{lemma}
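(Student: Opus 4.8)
The plan is to treat this as a one-step importance-sampling identity, exploiting the fact that everything is explicit once we condition on $\mathcal{F}_{n,t-1}^{N}$. Conditionally on this $\sigma$-field, the pair $(J_{t}^{\ell}, \epart{t}{\ell})$ is drawn from the instrumental distribution $\instrpostaux{t}{}$ defined in \eqref{eq:instrumental-distribution-filtering}, and the weight $\ewght{t}{\ell}$ is the deterministic function of $(J_{t}^{\ell},\epart{t}{\ell})$ (and of the past) prescribed in line $7$ of Algorithm~\ref{alg:FSMC}. First I would therefore write
\[
\CExpparam{\ewght{t}{\ell}h(\epart{t}{\ell})}{\mathcal{F}_{n,t-1}^{N}}{} = \sum_{i=1}^{N}\int \instrpostaux{t}{}(i,\rmd x)\, \frac{\m_{\param_n}(\epart{t-1}{i},x)\, g_{\param_n}(x,\bfY_{T_n+t})}{\adjfunc{t}{}{\epart{t-1}{i}}\,\kiss{t}{}(\epart{t-1}{i},x)}\, h(x)\eqsp.
\]

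The key computational step is to make the normalizing constant of $\instrpostaux{t}{}$ explicit. Since each $\kiss{t}{}(x,\cdot)$ is a Markov transition kernel with density with respect to $\lambda$, we have $\int \kiss{t}{}(\epart{t-1}{i},x)\lambda(\rmd x)=1$, so the proportionality in \eqref{eq:instrumental-distribution-filtering} becomes the identity
\[
\instrpostaux{t}{}(i,\rmd x) = \frac{\ewght{t-1}{i}\,\adjfunc{t}{}{\epart{t-1}{i}}\,\kiss{t}{}(\epart{t-1}{i},x)}{\sum_{j=1}^{N}\ewght{t-1}{j}\,\adjfunc{t}{}{\epart{t-1}{j}}}\,\lambda(\rmd x)\eqsp.
\]
Substituting this into the previous display, the factors $\adjfunc{t}{}{\epart{t-1}{i}}$ and $\kiss{t}{}(\epart{t-1}{i},x)$ cancel between the instrumental density and the weight — this cancellation is precisely the purpose of the importance-weight formula — leaving
\[
\CExpparam{\ewght{t}{\ell}h(\epart{t}{\ell})}{\mathcal{F}_{n,t-1}^{N}}{} = \frac{\sum_{i=1}^{N}\ewght{t-1}{i}\int \m_{\param_n}(\epart{t-1}{i},x)\,g_{\param_n}(x,\bfY_{T_n+t})\,h(x)\,\lambda(\rmd x)}{\sum_{j=1}^{N}\ewght{t-1}{j}\,\adjfunc{t}{}{\epart{t-1}{j}}}\eqsp.
\]

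Finally I would divide numerator and denominator by $\sumwght[N]{t-1}=\sum_{\ell}\ewght{t-1}{\ell}$ and recognize both as integrals against the weighted empirical filter $\phi_{t-1}^{N,\param_n}=\sum_{\ell}(\ewght{t-1}{\ell}/\sumwght[N]{t-1})\,\delta_{\epart{t-1}{\ell}}$: the numerator becomes $\phi_{t-1}^{N,\param_n}[\int \m_{\param_n}(\cdot,x)g_{\param_n}(x,\bfY_{T_n+t})h(x)\lambda(\rmd x)]$ and the denominator $\phi_{t-1}^{N,\param_n}[\adjfunc{t}{}{}]$, which is exactly \eqref{eq:whExpect}. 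There is no genuine obstacle here; the argument is essentially bookkeeping, and the only points requiring care are identifying the correct normalizing constant of $\instrpostaux{t}{}$ (where the Markov property of $\kiss{t}{}$ is used) and keeping track of the normalization of $\phi_{t-1}^{N,\param_n}$, so that the common factor $\sumwght[N]{t-1}$ cancels cleanly.
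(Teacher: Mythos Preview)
Your proposal is correct and follows essentially the same route as the paper's proof: both compute the conditional expectation by integrating the weight times $h$ against the explicit instrumental law $\instrpostaux{t}{}$, identify the normalizing constant $\sum_j \ewght{t-1}{j}\adjfunc{t}{}{\epart{t-1}{j}}$, cancel the factors $\adjfunc{t}{}{}$ and $\kiss{t}{}$, and then rewrite the resulting ratio of weighted sums as a ratio of $\phi_{t-1}^{N,\param_n}$-integrals. Your write-up is in fact slightly more explicit than the paper's about why the normalizing constant is what it is (using that $\kiss{t}{}(x,\cdot)$ is a probability density) and about the final normalization by $\sumwght[N]{t-1}$.
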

\begin{proof}
  By definition of the weighted particles,
 \begin{align*}
&\CExpparam{\ewght{t}{\ell}h(\epart{t}{\ell})}{\mathcal{F}_{n,t-1}^{N}}{} \nonumber\\
&= \CExpparam{\frac{\m_{\param_n}(\epart{t-1}{I_{t}^{1}},\epart{t}{1}) g_{\param_n}(\epart{t}{1},\bfY_{t+T_n})}{\adjfunc{t}{}{\epart{t-1}{I_{t}^{1}}} \kiss{t}{}(\epart{t-1}{I_{t}^{1}},\epart{t}{1})}h(\epart{t}{1})}{\mathcal{F}_{n,t-1}^{N}}{} \nonumber\\
&= \left(\sum_{i=1}^{N}\ewght{t-1}{i} \adjfunc{t}{}{}{(\epart{t-1}{i})}\right)^{-1}\sum_{i=1}^{N}\int\ewght{t-1}{i} \adjfunc{t}{}{\epart{t-1}{i}}\kiss{t}{}(\epart{t-1}{i},x)\\
&\hspace{5.5cm}\times\frac{\m_{\param_n}(\epart{t-1}{i},x) g_{\param_n}(x,\bfY_{t+T_n})}{\adjfunc{t}{}{\epart{t-1}{i}} \kiss{t}{}{}(\epart{t-1}{i},x)}h(x)\lambda(\rmd x) \nonumber\\
&= \left(\sum_{i=1}^{N}\ewght{t-1}{i} \adjfunc{t}{}{}{(\epart{t-1}{i})}\right)^{-1}\sum_{i=1}^{N}\int\ewght{t-1}{i}\m_{\param_n}(\epart{t-1}{i},x) g_{\param_n}(x,\bfY_{t+T_n})h(x)\lambda(\rmd x) \eqsp.
\end{align*}
\end{proof}

\begin{lemma}
\label{Lem:Upperbounds}
Assume A\ref{assum:strong} and A\ref{assum:dub:lec}.  Let $\left\{
  (\epart{t}{\ell}, \ewght{t}{\ell}), 1 \leq \ell \leq N, 0 \leq t \leq
  \tau_{n+1} \right\}$ be the weighted samples obtained by Algorithm~\ref{alg:FSMC}
in Appendix~\ref{app:alg}, with input variables $\param_n, \tau_{n+1}$, $N$,
$\bfY_{T_n+1:T_n+\tau_{n+1}}$.
\begin{enumerate}[(i)]
\item \label{Lem:Upperbounds:ind} For any $t\in\{0,\dots, \tau_{n+1}\}$ and any
measurable function $h$ on $\Xset^{\tau_{n+1}+1}$, the random variables $\displaystyle \left\{ \ewght{t}{\ell} \  G^{N,\param_n}_{t,\tau_{n+1}}h(\epart{t}{\ell}) \ |\mathcal{L}_{t,\tau_{n+1}}^{\param_n}\1|_\infty^{-1}\right\}_{\ell=1}^{N}$ are:
    \begin{enumerate}
    \item \label{Lem:Upperbounds:ind:claim1} conditionally independent and identically distributed given $\mathcal{F}_{n,t-1}^{N}$\eqsp,
    \item \label{Lem:Upperbounds:ind:claim2}centered conditionally to $\mathcal{F}_{n,t-1}^{N}$\eqsp.
    \end{enumerate}
  \item \label{Lem:Upperbounds:boundup} For any $t\in\{0,\dots, \tau_{n+1}\}$:
\begin{equation}\label{eq:bound-G}
\left| \dfrac{G^{N,\param_n}_{t,\tau_{n+1}}\mathsf{S}_{\tau_{n+1}}(\epart{t}{\ell})}{|\mathcal{L}_{t,\tau_{n+1}}^{\param_n}\1|_\infty}\right|\leq \sum_{s=1}^{\tau_{n+1}}\rho^{|t-s|}\osc\{S(\cdot,\cdot,\bfY_{s+T_n})\}\eqsp,
\end{equation}
where $\mathsf{S}_{\tau}$ is defined by \eqref{eq:addfuncbis}.
\item \label{Lem:Upperbounds:boundlow}
For all $x\in\Xset$ and any $t\in\{0,\dots, \tau_{n+1}\}$, 
\[
\displaystyle
\dfrac{\mathcal{L}_{t,\tau_{n+1}}^{\param_n}\1(x)}{|\mathcal{L}_{t,\tau_{n+1}}^{\param_n}\1|_\infty}
\geq \dfrac{\sigma_-}{\sigma_+} \eqsp, \qquad \displaystyle
\dfrac{\mathcal{L}_{t-1,\tau_{n+1}}^{\param_n}\1(x)}{|\mathcal{L}_{t,\tau_{n+1}}^{\param_n}\1|_\infty}
\geq \dfrac{\sigma_-^{2}}{\sigma_+}b_{-}(\bfY_{t+T_n}) \eqsp.
\]
\end{enumerate}
\end{lemma}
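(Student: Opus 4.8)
For \ref{Lem:Upperbounds:ind}, the conditional i.i.d.\ property \ref{Lem:Upperbounds:ind:claim1} is immediate from the sampling mechanism in Algorithm~\ref{alg:FSMC}: conditionally on $\mathcal{F}_{n,t-1}^{N}$ the pairs $\{(J_t^\ell,\epart{t}{\ell})\}_{\ell=1}^N$ are drawn independently from the \emph{same} instrumental distribution $\instrpostaux{t}{}$, while the function $x\mapsto G_{t,\tau_{n+1}}^{N,\param_n}h(x)$ (which through \eqref{eq:defG}, \eqref{eq:defLN} depends only on $\phi_{0}^{N,\param_n},\dots,\phi_{t-1}^{N,\param_n}$) and the constant $|\mathcal{L}_{t,\tau_{n+1}}^{\param_n}\1|_\infty$ are $\mathcal{F}_{n,t-1}^{N}$-measurable; hence each member of the family is the same measurable function of $(J_t^\ell,\epart{t}{\ell})$ and of $\mathcal{F}_{n,t-1}^{N}$. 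For the centering \ref{Lem:Upperbounds:ind:claim2} I apply Lemma~\ref{lem:esperancecond} with $h$ replaced by $G_{t,\tau_{n+1}}^{N,\param_n}h$, which reduces the claim to showing that the (unnormalised) predictive measure $\mu_t(\rmd x)\eqdef\phi_{t-1}^{N,\param_n}[m_{\param_n}(\cdot,x)]\,g_{\param_n}(x,\bfY_{T_n+t})\,\lambda(\rmd x)$ integrates $G_{t,\tau_{n+1}}^{N,\param_n}h$ to zero. In view of \eqref{eq:defG}, this follows from the single identity $\mu_t[\mathcal{L}_{t,\tau_{n+1}}^{N,\param_n}h]=\phi_{t-1}^{N,\param_n}[\mathcal{L}_{t-1,\tau_{n+1}}^{N,\param_n}h]$, valid for every $h$ (in particular $\1$). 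I would prove this identity by inserting the definition of the particle backward kernel $\rmB_{\phi_{t-1}^{N,\param_n}}^{\param_n}$ into \eqref{eq:defLN}: the factor $\phi_{t-1}^{N,\param_n}[m_{\param_n}(\cdot,x)]$ cancels the backward-kernel normalisation, and the one-step recursion $\rmL_{t-1,\tau}^{\param_n}h=\int m_{\param_n}(\cdot,x)g_{\param_n}(x,\bfY_{T_n+t})\rmL_{t,\tau}^{\param_n}h\,\lambda(\rmd x)$ coming from \eqref{eq:ldroit} rebuilds exactly $\mathcal{L}_{t-1,\tau_{n+1}}^{N,\param_n}h$ once integrated against $\phi_{t-1}^{N,\param_n}$. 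Subtracting off the constant in \eqref{eq:defG} then gives $\mu_t[G_{t,\tau_{n+1}}^{N,\param_n}h]=0$.

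For \ref{Lem:Upperbounds:boundlow}, the key remark is that $\rmL_{t,\tau}^{\param_n}\1(x_{0:t})$ depends only on $x_t$, so the (probability) backward kernels in \eqref{eq:defL} integrate to one and $\mathcal{L}_{t,\tau_{n+1}}^{\param_n}\1(x)=\rmL_{t,\tau_{n+1}}^{\param_n}\1(x)$ is a function of $x$ alone. Combining the forward recursion \eqref{eq:lcal} with the two-sided bound $\sigma_-\le m_{\param_n}\le\sigma_+$ from A\ref{assum:strong} yields $\sigma_-\,I_t\le \mathcal{L}_{t,\tau_{n+1}}^{\param_n}\1(x)\le \sigma_+\,I_t$, where $I_t\eqdef\int g_{\param_n}(x',\bfY_{T_n+t+1})\,\mathcal{L}_{t+1,\tau_{n+1}}^{\param_n}\1(x')\,\lambda(\rmd x')$ does not depend on $x$; this gives the first inequality $\mathcal{L}_{t,\tau_{n+1}}^{\param_n}\1(x)/|\mathcal{L}_{t,\tau_{n+1}}^{\param_n}\1|_\infty\ge\sigma_-/\sigma_+$. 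For the second inequality I apply one further step of \eqref{eq:lcal} (at $t-1$), bound $m_{\param_n}\ge\sigma_-$, insert the lower bound $\mathcal{L}_{t,\tau_{n+1}}^{\param_n}\1\ge(\sigma_-/\sigma_+)|\mathcal{L}_{t,\tau_{n+1}}^{\param_n}\1|_\infty$ just obtained, and finally use $\int g_{\param_n}(x,\bfY_{T_n+t})\,\lambda(\rmd x)\ge b_-(\bfY_{T_n+t})$ from \eqref{eq:b+:b-}.

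The main obstacle is \ref{Lem:Upperbounds:boundup}. Because $\rmL_{t,\tau}^{\param_n}\1$ depends only on $x_t$, the same argument as above shows $\mathcal{L}_{t,\tau_{n+1}}^{N,\param_n}\1=\mathcal{L}_{t,\tau_{n+1}}^{\param_n}\1$, so the bounded factor $\mathcal{L}_{t,\tau_{n+1}}^{N,\param_n}\1(x)/|\mathcal{L}_{t,\tau_{n+1}}^{\param_n}\1|_\infty\le 1$ can be pulled out. Writing $G_{t,\tau_{n+1}}^{N,\param_n}\addfunc{\tau_{n+1}}/|\mathcal{L}_{t,\tau_{n+1}}^{\param_n}\1|_\infty$ from \eqref{eq:defG} as this factor times the difference of two \emph{normalised} smoothing expectations of the additive functional $\addfunc{\tau_{n+1}}$, I reduce the claim to bounding $|\Lambda^{x}[\addfunc{\tau_{n+1}}]-\bar\Lambda[\addfunc{\tau_{n+1}}]|$, where $\Lambda^{x}$ is the path law normalised from $\mathcal{L}_{t,\tau_{n+1}}^{N,\param_n}$ conditioned at $x_t=x$ and $\bar\Lambda$ the one built from $\phi_{t-1}^{N,\param_n}\,\mathcal{L}_{t-1,\tau_{n+1}}^{N,\param_n}$. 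These two laws share the same backward and forward one-step dynamics and differ only through their conditioning near time $t$. Treating the sum $\addfunc{\tau_{n+1}}=\sum_{s=1}^{\tau_{n+1}}S(\cdot,\cdot,\bfY_{s+T_n})$ term by term, the crux is the uniform exponential forgetting of the normalised smoothing kernels under A\ref{assum:strong}: the marginals of $\Lambda^{x}$ and $\bar\Lambda$ on $(x_{s-1},x_s)$ are within total variation $\rho^{|t-s|}$ of each other, with $\rho=1-\sigma_-/\sigma_+$, so each term contributes at most $\rho^{|t-s|}\osc\{S(\cdot,\cdot,\bfY_{s+T_n})\}$. This contraction estimate is the technical heart of the lemma; it is the Hilbert-metric/coupling argument of \cite{delmoral:guionnet:1998} and \cite{delmoral:ledoux:miclo:2003} as used in \cite{douc:garivier:moulines:olsson:2010}, and summing over $s$ gives precisely \eqref{eq:bound-G}.
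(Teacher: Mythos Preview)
Your proposal is correct and follows essentially the same route as the paper: part~\ref{Lem:Upperbounds:ind} via the sampling mechanism and the centering identity (the paper simply cites \GisCentered), part~\ref{Lem:Upperbounds:boundup} by the term-by-term decomposition $\addfunc{\tau_{n+1}}=\sum_s \Pi_{s-1:s,\tau_{n+1}}S(\cdot,\cdot,\bfY_{s+T_n})$ together with the exponential forgetting bound of \GNorm, and part~\ref{Lem:Upperbounds:boundlow} by the one-step recursion \eqref{eq:lcal} combined with A\ref{assum:strong}. The only imprecision is your assertion that $x\mapsto G_{t,\tau_{n+1}}^{N,\param_n}h(x)$ is $\mathcal{F}_{n,t-1}^{N}$-measurable: through $\rmL_{t,\tau_{n+1}}^{\param_n}$ it also depends on $\bfY_{T_n+t+1:T_n+\tau_{n+1}}$, which are not in $\mathcal{F}_{n,t-1}^{N}$ as defined in \eqref{eq:Filtrations:Fnt}; the conditional i.i.d.\ and centering statements should be read with the observation path held fixed (equivalently, conditionally on the full block $\bfY_{T_n+1:T_n+\tau_{n+1}}$), which is how the paper and \cite{douc:garivier:moulines:olsson:2010} implicitly use them.
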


\begin{proof}
  The proof of \eqref{Lem:Upperbounds:ind} is given by \GisCentered.

{\em Proof of \eqref{Lem:Upperbounds:boundup}}. Let $\Pi_{s-1:s,\tau}$ be the operator which associates to any bounded and measurable function $h$ on $\Xset\times\Xset$ the function $\Pi_{s-1:s,\tau}h$ given, for any $(x_0,\dots,x_{\tau}) \in \Xset^{\tau+1}$, by
\begin{equation*}
\Pi_{s-1:s,\tau}h(x_{0:\tau}) \eqdef h(x_{s-1:s})\eqsp.
\end{equation*}
Using this notation, we may write $\addfunc{\tau} = \sum_{s=1}^{\tau}
\Pi_{s-1:s,\tau}S(\cdot,\cdot,\bfY_{s+T})$ and $G^{N,\param}_{t,\tau}\addfunc{\tau} =
\sum_{s=1}^{\tau}G^{N,\param}_{t,\tau}\Pi_{s-1:s,\tau}S(\cdot,\cdot,\bfY_{s+T})$.  Following the
same lines as in \GNorm,
\begin{align*}
|G^{N,\param}_{t,\tau}\Pi_{s-1:s,\tau}S(\cdot,\cdot,\bfY_{s+T})|_\infty &\leq \rho^{s-1-t} \osc(S(\cdot,\cdot,\bfY_{s+T})) |\mathcal{L}_{t,\tau}^{\param}\1|_\infty \quad \mbox{if}\quad t\leq s-1\eqsp,\\
|G^{N,\param}_{t,\tau}\Pi_{s-1:s,\tau}S(\cdot,\cdot,\bfY_{s+T})|_\infty &\leq \rho^{t-s} \osc(S(\cdot,\cdot,\bfY_{s+T})) |\mathcal{L}_{t,\tau}^{\param}\1|_\infty \qquad \mbox{if}\quad t\geq s\eqsp.
\end{align*}
Consequently,
\begin{multline*}
  \left|G^{N,\param}_{t,\tau}\addfunc{\tau}\right|_\infty \leq
  \sum_{s=1}^{\tau} |G_{t,\tau}^{N,\param}\Pi_{s-1:s,\tau}S(\cdot,\cdot,\bfY_{s+T})|_\infty\\ \leq
   \left(\sum_{s=1}^{\tau}\rho^{|t-s|}\osc\{S(\cdot,\cdot,\bfY_{s+T})\}\right)
  |\mathcal{L}_{t,\tau}^{\param}\1|_\infty\eqsp,
\end{multline*}
which shows \eqref{Lem:Upperbounds:boundup}. 

{\em Proof of \eqref{Lem:Upperbounds:boundlow}}.
By the definition \eqref{eq:defL}, for all $x\in\Xset$ and all $t\in\{1,\dots,\tau\}$,
\begin{multline*}
\mathcal{L}_{t,\tau}^{\param}\1(x) = \int m_{\param}(x,x_{t+1})g_{\param}(x_{t+1},\bfY_{t+T+1}) \ \\
\times\prod_{u=t+2}^{\tau}\m_{\param}(x_{u-1},\rmd x_u)g_{\param}(x_{u},\bfY_{u+T})\lambda(\rmd x_{t+1:\tau})\eqsp.
\end{multline*}
Hence, by A\ref{assum:strong},
\begin{align*}
\left|\mathcal{L}_{t,\tau}^{\param}\1\right|_{\infty}&\leq \sigma_+\int g_{\param}(x_{t+1},\bfY_{t+T+1})\mathcal{L}_{t+1,\tau}^{\param}\1(x_{t+1})\lambda(\rmd x_{t+1})\\
\mathcal{L}_{t,\tau}^{\param}\1(x)&\geq \sigma_- \int g_{\param}(x_{t+1},\bfY_{t+T+1})\mathcal{L}_{t+1,\tau}^{\param}\1(x_{t+1})\lambda(\rmd x_{t+1})\eqsp,
\end{align*}
which concludes the proof of the first statement. By (\ref{eq:lcal}),
A\ref{assum:strong} and \eqref{eq:b+:b-},
\begin{equation*}
\dfrac{\mathcal{L}_{t-1,\tau}^{\param}\1(x)}{|\mathcal{L}_{t,\tau}^{\param}\1|_\infty} =\int \m_{\param}(x,x^{\prime})g_{\param}(x^{\prime},\bfY_{t+T}) \dfrac{\mathcal{L}_{t,\tau}^{\param}\1(x^{\prime})}{|\mathcal{L}_{t,\tau}^{\param}\1|_\infty}\lambda(\rmd x')
 \geq \dfrac{\sigma_-^{2}}{\sigma_+}b_{-}(\bfY_{t+T})\eqsp.
\end{equation*}
\end{proof}

The proofs of Propositions~\ref{Prop:NormD} and \ref{Prop:NormC} follow the same lines as \cite[Propositions $1$-$2$]{dubarry:lecorff:2011}. The upper bounds given here provide an explicit dependence on the observations.

\begin{proposition}\label{Prop:NormD}
  Assume A\ref{assum:strong} and A\ref{assum:dub:lec}. Let $\left\{
  (\epart{t}{\ell}, \ewght{t}{\ell}), 1 \leq \ell \leq N, 0 \leq t \leq
  \tau_{n+1} \right\}$ be the weighted
  samples obtained by Algorithm~\ref{alg:FSMC} in Appendix~\ref{app:alg}, with
  input variables $\param_n, \tau_{n+1}$, $N$, $\bfY_{T_n+1:T_n+\tau_{n+1}}$.
  For all $p > 1$, there exists a constant $C$ such that 
\begin{multline}\label{Eq:NormD}
\mathbb{E}_{}\left[\left|\sum_{t=0}^{\tau_{n+1}}D_{t,\tau_{n+1}}^{N,\param_n}(\addfunc{\tau_{n+1}})\right|^{p}\right]\\ 
\leq C \frac{\tau_{n+1}^{(\frac{p}{2}-1) \vee 0}}{N^{p-(\frac{p}{2}\vee 1)}} \sum_{t=0}^{\tau_{n+1}}\mathbb{E}_{}\left[\left|\frac{\ewght{+}{}(\bfY_{t+T_n})}{b_{-}(\bfY_{t+T_{n}})}\sum_{s=1}^{\tau_{n+1}}\rho^{|t-s|}\osc\{S(\cdot,\cdot, \bfY_{s+T_n})\}\right|^{p}\right]\eqsp.
\end{multline}
where $D_{t,\tau}^{N,\param}$ is defined in \eqref{eq:defD}.
\end{proposition}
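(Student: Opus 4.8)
The plan is to exploit that, with the observations $\bfY_{T_n+1:T_n+\tau_{n+1}}$ held fixed (as agreed in the preamble of this appendix), the summands $\{D_{t,\tau_{n+1}}^{N,\param_n}(\addfunc{\tau_{n+1}})\}_{0\le t\le\tau_{n+1}}$ form a martingale difference array for the filtration $\{\mathcal{F}_{n,t}^N\}$. Indeed, by \eqref{eq:defD} one writes $D_{t,\tau_{n+1}}^{N,\param_n}(\addfunc{\tau_{n+1}}) = A_t\,N^{-1}\sum_{\ell=1}^N \ewght{t}{\ell}\,G_{t,\tau_{n+1}}^{N,\param_n}\addfunc{\tau_{n+1}}(\epart{t}{\ell})/|\mathcal{L}_{t,\tau_{n+1}}^{\param_n}\1|_\infty$, with $\mathcal{F}_{n,t-1}^N$-measurable prefactor $A_t \eqdef \phi_{t-1}^{N,\param_n}[\adjfunc{t}{}{}]\big/\phi_{t-1}^{N,\param_n}[\mathcal{L}_{t-1,\tau_{n+1}}^{\param_n}\1/|\mathcal{L}_{t,\tau_{n+1}}^{\param_n}\1|_\infty]$. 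By Lemma~\ref{Lem:Upperbounds}\eqref{Lem:Upperbounds:ind} the inner summands are conditionally i.i.d. and centered given $\mathcal{F}_{n,t-1}^N$, so $\CExpparam{D_{t,\tau_{n+1}}^{N,\param_n}(\addfunc{\tau_{n+1}})}{\mathcal{F}_{n,t-1}^N}{}=0$, which is the martingale difference property.

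First I would control the sum over $t$ by a martingale moment inequality. By the Burkholder inequality, valid for every $p>1$, $\|\sum_{t} D_{t,\tau_{n+1}}^{N,\param_n}(\addfunc{\tau_{n+1}})\|_p^p \le C\,\mathbb{E}[(\sum_{t} |D_{t,\tau_{n+1}}^{N,\param_n}(\addfunc{\tau_{n+1}})|^2)^{p/2}]$. For $p\ge 2$ I would bound the right-hand side by Minkowski's inequality in $\rmL_{p/2}$ followed by the power-mean inequality over the $\tau_{n+1}+1$ indices, producing the factor $\tau_{n+1}^{p/2-1}$; for $1<p<2$ I would instead use subadditivity of $u\mapsto u^{p/2}$, which produces no $\tau_{n+1}$ factor. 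The two regimes combine into
\[
\mathbb{E}\Bigl[\Bigl|\sum_{t=0}^{\tau_{n+1}} D_{t,\tau_{n+1}}^{N,\param_n}(\addfunc{\tau_{n+1}})\Bigr|^p\Bigr]\le C\,\tau_{n+1}^{(\frac{p}{2}-1)\vee 0}\sum_{t=0}^{\tau_{n+1}}\mathbb{E}\bigl[|D_{t,\tau_{n+1}}^{N,\param_n}(\addfunc{\tau_{n+1}})|^p\bigr]\eqsp.
\]

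It then remains to bound each $\mathbb{E}[|D_{t,\tau_{n+1}}^{N,\param_n}(\addfunc{\tau_{n+1}})|^p]$. Conditionally on $\mathcal{F}_{n,t-1}^N$ the term equals $|A_t|^p$ times the $p$-th moment of a normalized sum of $N$ i.i.d. centered variables; the conditional Marcinkiewicz--Zygmund inequality, together with the pathwise bound $(\sum_\ell a_\ell)^{p/2}\le N^{(\frac{p}{2}-1)\vee 0}\sum_\ell a_\ell^{p/2}$, yields the gain $N^{-(p-(\frac{p}{2}\vee 1))}$. To turn the per-particle moment into the claimed envelope I would invoke three deterministic bounds, all valid given the observations: the weight bound $\ewght{t}{\ell}\le\ewght{+}{}(\bfY_{t+T_n})$ (from the definition of $\ewght{+}{}$), the bound \eqref{eq:bound-G} of Lemma~\ref{Lem:Upperbounds}\eqref{Lem:Upperbounds:boundup}, and $|A_t|\le |\adjfunc{}{}{}|_\infty\,\sigma_+\sigma_-^{-2}/b_-(\bfY_{t+T_n})$, obtained from A\ref{assum:dub:lec} and the lower bound of Lemma~\ref{Lem:Upperbounds}\eqref{Lem:Upperbounds:boundlow}. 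Since $\ewght{+}{}(\bfY_{t+T_n})$ and $b_-(\bfY_{t+T_n})$ depend only on $\bfY_{t+T_n}\in\mathcal{F}_{n,t-1}^N$, they factor out of the conditional estimate and the tower property gives $\mathbb{E}[|D_{t,\tau_{n+1}}^{N,\param_n}(\addfunc{\tau_{n+1}})|^p]\le C N^{-(p-(\frac{p}{2}\vee1))}\mathbb{E}[|(\ewght{+}{}(\bfY_{t+T_n})/b_-(\bfY_{t+T_n}))\sum_s\rho^{|t-s|}\osc\{S(\cdot,\cdot,\bfY_{s+T_n})\}|^p]$. Combined with the outer estimate this is exactly \eqref{Eq:NormD}.

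The delicate point is the measurability bookkeeping: because $\mathcal{L}_{t,\tau_{n+1}}^{\param_n}$ and $G_{t,\tau_{n+1}}^{N,\param_n}$ involve the future observations $\bfY_{T_n+t+1:T_n+\tau_{n+1}}$, the prefactor $A_t$ and the envelope $\sum_s\rho^{|t-s|}\osc\{S(\cdot,\cdot,\bfY_{s+T_n})\}$ are not genuinely $\mathcal{F}_{n,t-1}^N$-measurable. The clean way around this is to run the martingale and conditional-i.i.d. arguments with the observations held fixed (equivalently, to enlarge each $\mathcal{F}_{n,t-1}^N$ by $\sigma(\bfY_{T_n+1:T_n+\tau_{n+1}})$), which is legitimate because the resampling and proposal randomness driving the particle system is independent of $\bfY$; the centering properties of Lemma~\ref{Lem:Upperbounds}\eqref{Lem:Upperbounds:ind} and Lemma~\ref{lem:esperancecond} survive this enlargement unchanged. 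The remaining care is simply to keep the two regimes $p\ge 2$ and $1<p<2$ consistent so that they collapse into the single factors $\tau_{n+1}^{(\frac{p}{2}-1)\vee0}$ and $N^{-(p-(\frac{p}{2}\vee1))}$; integrability of the final expectation is not needed for \eqref{Eq:NormD} itself but will be supplied later by A\ref{assum:moment:sup}-($\bar p$) and A\ref{assum:moment:w}.
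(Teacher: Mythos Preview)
Your proposal is correct and follows essentially the same route as the paper's proof: bound the prefactor $A_t$ via Lemma~\ref{Lem:Upperbounds}\eqref{Lem:Upperbounds:boundlow} and A\ref{assum:dub:lec}, apply Burkholder's inequality to the outer martingale over $t$, then the convexity inequality $(\sum a_k)^{p/2}\le \tau^{(p/2-1)\vee 0}\sum a_k^{p/2}$, and repeat Burkholder plus convexity for the inner conditionally i.i.d.\ sum over $\ell$ before invoking \eqref{eq:bound-G} and the weight envelope. Your explicit remark on enlarging $\mathcal{F}_{n,t-1}^N$ by $\sigma(\bfY_{T_n+1:T_n+\tau_{n+1}})$ is a welcome clarification that the paper leaves implicit (it relies on \GisCentered\ for this point).
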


\begin{proof}  By Lemma \ref{Lem:Upperbounds}\eqref{Lem:Upperbounds:boundlow},
\begin{equation*}
\frac{\phi_{t-1}^{N,\param_n}[\adjfunc{t}{}{}]}{\phi_{t-1}^{N,\param_n}\left[\frac{\mathcal{L}_{t-1,\tau_{n+1}}^{\param_n}\1}{|\mathcal{L}_{t,\tau_{n+1}}^{\param_n}\1|_{\infty}}\right]} \leq  \frac{\sigma_{+} |\adjfunc{}{}{}|_{\infty}}{\sigma_{-}^{2}b_{-}(\bfY_{t+T_n})}\eqsp.
\end{equation*} 
By Lemma~\ref{Lem:Upperbounds}(\ref{Lem:Upperbounds:ind}) and since $\param_{n}$ is $\mathcal{F}_{n,t}^{N}$-measurable for all $t\in\{0,\dots,\tau_{n+1}\}$, $\left\{D_{t,\tau_{n+1}}^{N,\param_n}(\addfunc{\tau_{n+1}}),\mathcal{F}_{n,t}^{N}\right\}_{0\leq
  t\leq \tau_{n+1}}$ is a martingale difference. Since $p> 1$,
Burkholder's inequality (see \cite[Theorem 2.10, page 23]{hall:heyde:1980})
states the existence of a constant $C$ depending only on $p$ such that:
\begin{equation*}
\mathbb{E}_{}\left[\left|\sum_{t=0}^{\tau_{n+1}}D_{t,\tau_{n+1}}^{N,\param_n}(\addfunc{\tau_{n+1}})\right|^{p}\right]\leq C\mathbb{E}\left[\left|\sum_{t=0}^{\tau_{n+1}} \left|D_{t,\tau_{n+1}}^{N,\param_n}(\addfunc{\tau_{n+1}}) \right|^2\right|^{p/2}\right]\eqsp.
\end{equation*}
Hence,
\begin{multline*}
\mathbb{E}_{}\left[\left|\sum_{t=0}^{\tau_{n+1}}D_{t,\tau_{n+1}}^{N,\param_n}(\addfunc{\tau_{n+1}})\right|^{p}\right] \leq C   \left(\frac{\sigma_{+} |\adjfunc{}{}{}|_{\infty}}{\sigma_{-}^{2}}\right)^p\\
  \times\mathbb{E}_{}\left[\left|\sum_{t=0}^{\tau_{n+1}}\left|N^{-1}\sum_{\ell=1}^{N}\frac{\ewght{t}{\ell}}{b_{-}(\bfY_{t+T_n})}
      \frac{G_{t,\tau_{n+1}}^{N,\param_n}\addfunc{\tau_{n+1}}(\epart{t}{\ell})}{|\mathcal{L}_{t,\tau_{n+1}}^{\param_n}\1|_{\infty}}\right|^2\right|^{p/2}\right]\eqsp,
\end{multline*}
which implies, using the convexity inequality $(\sum_{k=1}^\tau a_k)^{p/2} \leq
\tau^{(p/2-1) \vee 0} \sum_{k=1}^\tau a_k^{p/2}$,
\begin{multline*}
\mathbb{E}_{}\left[\left|\sum_{t=0}^{\tau_{n+1}}D_{t,\tau_{n+1}}^{N,\param_n}(\addfunc{\tau_{n+1}})\right|^{p}\right]\leq C\left(\frac{\sigma_{+} |\adjfunc{}{}{}|_{\infty}}{\sigma_{-}^{2}}\right)^p\\
\times \frac{\left(\tau_{n+1}+1\right)^{(\frac{p}{2}-1) \vee 0}}{N^{p}} \sum_{t=0}^{\tau_{n+1}}  \mathbb{E}_{}\left[\left|\frac{1}{b_{-}(\bfY_{t+T_n})} \sum_{\ell=1}^{N} \ewght{t}{\ell} \frac{G_{t,\tau_{n+1}}^{N,\param_n}\addfunc{\tau_{n+1}}(\epart{t}{\ell})}{|\mathcal{L}_{t,\tau_{n+1}}^{\param_n}\1|_{\infty}}\right|^{p}\right]\eqsp.
\end{multline*}
Since $\bfY_{t+T_n}$ and $\param_{n}$ are $\mathcal{F}_{n,t-1}^{N}$-measurable,
\begin{multline*}
\mathbb{E}_{}\left[\left|\frac{1}{b_{-}(\bfY_{t+T_n})} \sum_{\ell=1}^{N} \ewght{t}{\ell} \frac{G_{t,\tau_{n+1}}^{N,\param_n}\addfunc{\tau_{n+1}}(\epart{t}{\ell})}{|\mathcal{L}_{t,\tau_{n+1}}^{\param_n}\1|_{\infty}}\right|^{p}\right]\\ = \mathbb{E}_{}\left[\mathbb{E}_{}\left[\left| \sum_{\ell=1}^{N} \ewght{t}{\ell} \frac{G_{t,\tau_{n+1}}^{N,\param_n}\addfunc{\tau_{n+1}}(\epart{t}{\ell})}{|\mathcal{L}_{t,\tau_{n+1}}^{\param_n}\1|_{\infty}}\right|^{p}\middle|\mathcal{F}_{n,t-1}^{N}\right]\frac{1}{b_{-}(\bfY_{t+T_n})^{p}}\right]
\end{multline*}
By Lemma~\ref{Lem:Upperbounds}\eqref{Lem:Upperbounds:ind}, using again the
Burkholder and convexity inequalities, there exists $C$
s.t.
\begin{align*}
\mathbb{E}_{}\left[\left| \sum_{\ell=1}^{N} \ewght{t}{\ell} \frac{G_{t,\tau_{n+1}}^{N,\param_n}\addfunc{\tau_{n+1}}(\epart{t}{\ell})}{|\mathcal{L}_{t,\tau_{n+1}}^{\param_n}\1|_{\infty}}\right|^{p}\middle|\mathcal{F}_{t-1,n}^{N}\right]&\leq CN^{(\frac{p}{2}-1)\vee 0}\mathbb{E}_{}\left[\sum_{\ell=1}^{N} \left|\ewght{t}{\ell} \frac{G_{t,\tau_{n+1}}^{N,\param_n}\addfunc{\tau_{n+1}}(\epart{t}{\ell})}{|\mathcal{L}_{t,\tau_{n+1}}^{\param_n}\1|_{\infty}}\right|^{p}\middle|\mathcal{F}_{n,t-1}^{N}\right]\\
&\leq CN^{\frac{p}{2}\vee 1}\mathbb{E}_{}\left[\left|\ewght{t}{1} \frac{G_{t,\tau_{n+1}}^{N,\param_n}\addfunc{\tau_{n+1}}(\epart{t}{1})}{|\mathcal{L}_{t,\tau_{n+1}}^{\param_n}\1|_{\infty}}\right|^{p}\middle|\mathcal{F}_{n,t-1}^{N}\right]\eqsp.
\end{align*}
The proof is concluded by \eqref{eq:bound-G}.
\end{proof}

\begin{proposition}
\label{Prop:NormC}
Assume A\ref{assum:strong} and A\ref{assum:dub:lec}. Let $\left\{
  (\epart{t}{\ell}, \ewght{t}{\ell}), 1 \leq \ell \leq N, 0 \leq t \leq
  \tau_{n+1} \right\}$ be the weighted samples obtained by
Algorithm~\ref{alg:FSMC} in Appendix~\ref{app:alg}, with input variables
$\param_n, \tau_{n+1}$, $N$, $\bfY_{T_n+1:T_n+\tau_{n+1}}$. For all $\bar p > 1$ and all $p\in (1,\bar p)$, there exists a constant $C$ s.t. for any $t\in\{0,\dots,\tau_{n+1}\}$,
\begin{multline}\label{Eq:NormC}
\mathbb{E}_{}\left[\left|C_{t,\tau_{n+1}}^{N,\param_n}(\addfunc{\tau_{n+1}})\right|^{p}\right]\leq CN^{(\frac{p}{2}\vee \frac{1}{\alpha})+(\frac{p}{2}\vee \frac{1}{\beta})-2p}\\
\times\mathbb{E}_{}\left[\left|\frac{\ewght{+}{}(\bfY_{t+T_n})}{b_{-}(\bfY_{t+T_{n}})}\right|^{2p}\mathbb{E}_{}\left[\left|\sum_{s=1}^{\tau_{n+1}}\rho^{|t-s|}\osc\{S(\cdot,\cdot, \bfY_{s+T_n})\}\right|^{\bar p}\middle|\mathcal{F}_{n,t-1}^{N}\right]^{p/\bar p}\right]\eqsp,
\end{multline}
where $C_{t,\tau}^{N,\param}$ is defined in \eqref{eq:defC} and $\alpha\eqdef \bar p/ p$ and $\beta^{-1} = 1 - \alpha^{-1}$.
\end{proposition}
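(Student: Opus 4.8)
The plan is to mirror the proof of Proposition~\ref{Prop:NormD}, the extra difficulty being that $C_{t,\tau_{n+1}}^{N,\param_n}$ is a genuinely \emph{second order} term built from a ratio with a random denominator. Writing, within block $n$ and for fixed $t$,
\[
A_N \eqdef N^{-1}\sum_{i=1}^N \ewght{t}{i}\frac{\mathcal{L}_{t,\tau_{n+1}}^{\param_n}\1(\epart{t}{i})}{|\mathcal{L}_{t,\tau_{n+1}}^{\param_n}\1|_\infty}\eqsp,\qquad B_N \eqdef N^{-1}\sum_{\ell=1}^N \ewght{t}{\ell}\frac{G_{t,\tau_{n+1}}^{N,\param_n}\addfunc{\tau_{n+1}}(\epart{t}{\ell})}{|\mathcal{L}_{t,\tau_{n+1}}^{\param_n}\1|_\infty}\eqsp,
\]
and $\bar a \eqdef \CExpparam{A_N}{\mathcal{F}_{n,t-1}^N}{}$, I would first use Lemma~\ref{lem:esperancecond} with $h=\mathcal{L}_{t,\tau_{n+1}}^{\param_n}\1/|\mathcal{L}_{t,\tau_{n+1}}^{\param_n}\1|_\infty$ together with the identity \eqref{eq:lcal} to identify $\bar a^{-1}$ with the $\mathcal{F}_{n,t-1}^N$-measurable factor appearing in \eqref{eq:defC}--\eqref{eq:defD}; this gives the rewriting $C_{t,\tau_{n+1}}^{N,\param_n}(\addfunc{\tau_{n+1}}) = (A_N^{-1}-\bar a^{-1})B_N = \tfrac{\bar a - A_N}{A_N\bar a}B_N$. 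By Lemma~\ref{Lem:Upperbounds}(\ref{Lem:Upperbounds:ind}), both $\bar a - A_N = N^{-1}\sum_i(\bar a - U_t^i)$, with $U_t^i\eqdef\ewght{t}{i}\mathcal{L}_{t,\tau_{n+1}}^{\param_n}\1(\epart{t}{i})/|\mathcal{L}_{t,\tau_{n+1}}^{\param_n}\1|_\infty$, and $B_N$ are averages of conditionally i.i.d.\ and $\mathcal{F}_{n,t-1}^N$-centered terms; this is what ultimately yields the $N^{-2p}$-type decay, since each factor is ``of size $N^{-1/2}$''.

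The delicate step, and the main obstacle, is the control of the denominator. The conditional mean is bounded below by Lemma~\ref{Lem:Upperbounds}(\ref{Lem:Upperbounds:boundlow}) and A\ref{assum:dub:lec}, namely $\bar a \geq \tfrac{\sigma_-^2}{\sigma_+|\adjfunc{}{}{}|_\infty}b_-(\bfY_{t+T_n})$, so $\bar a^{-1}\lesssim b_-(\bfY_{t+T_n})^{-1}$. The factor $A_N^{-1}$, however, admits no deterministic lower bound. I would therefore replace $\tfrac{1}{A_N\bar a}$ by $\tfrac{C}{\bar a^{2}}\lesssim b_-(\bfY_{t+T_n})^{-2}$ on the event $\{A_N\geq\bar a/2\}$, and dispose of the complementary event $\{|A_N-\bar a|>\bar a/2\}$ by a concentration/truncation estimate: since $A_N-\bar a$ is a centered average of terms bounded by $\ewght{+}(\bfY_{t+T_n})$ (using $\ewght{t}{i}\leq\ewght{+}(\bfY_{t+T_n})$ under A\ref{assum:strong} and $\mathcal{L}_{t,\tau_{n+1}}^{\param_n}\1/|\mathcal{L}_{t,\tau_{n+1}}^{\param_n}\1|_\infty\leq 1$) and $\bar a\gtrsim b_-$, a conditional Burkholder bound makes this contribution negligible to any polynomial order in $N$. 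This is precisely where the power $b_-(\bfY_{t+T_n})^{-2p}$ arises, as two factors of $b_-^{-1}$, one from $\bar a^{-1}$ and one from the effective lower bound on $A_N$.

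With the denominator controlled by $C\,b_-(\bfY_{t+T_n})^{-2}$, I would condition on $\mathcal{F}_{n,t-1}^N$ (note that $b_-(\bfY_{t+T_n})$ and $\ewght{+}(\bfY_{t+T_n})$ are $\mathcal{F}_{n,t-1}^N$-measurable) and apply the conditional H\"older inequality to $\CExpparam{|\bar a-A_N|^p|B_N|^p}{\mathcal{F}_{n,t-1}^N}{}$ with the exponent $\beta$ attached to $\bar a-A_N$ and the exponent $\alpha=\bar p/p$ attached to $B_N$. To each factor I would then apply Burkholder's inequality (exactly as in Proposition~\ref{Prop:NormD}) followed by the convexity inequality $(\sum_k a_k)^{q/2}\le \tau^{(q/2-1)\vee 0}\sum_k a_k^{q/2}$; bounding the summands of $\bar a-A_N$ by $\ewght{+}(\bfY_{t+T_n})$ and those of $B_N$ by $\ewght{+}(\bfY_{t+T_n})\sum_s\rho^{|t-s|}\osc\{S(\cdot,\cdot,\bfY_{s+T_n})\}$ via \eqref{eq:bound-G} reproduces the factor $|\ewght{+}(\bfY_{t+T_n})/b_-(\bfY_{t+T_n})|^{2p}$, the $N$-exponent $(\tfrac{p}{2}\vee\tfrac{1}{\alpha})+(\tfrac{p}{2}\vee\tfrac{1}{\beta})-2p$, and the conditional moment $\CExpparam{|\sum_s\rho^{|t-s|}\osc\{S(\cdot,\cdot,\bfY_{s+T_n})\}|^{\bar p}}{\mathcal{F}_{n,t-1}^N}{}^{p/\bar p}$ produced by the assignment of $\alpha$ to $B_N$. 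Once the random-denominator step is settled, the remainder is the same martingale/H\"older/Burkholder bookkeeping already used for the term $D$.
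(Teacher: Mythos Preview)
Your identification of $\bar a$ via Lemma~\ref{lem:esperancecond} and \eqref{eq:lcal}, the rewriting $C_{t,\tau_{n+1}}^{N,\param_n}(\addfunc{\tau_{n+1}})=(\bar a-A_N)B_N/(A_N\bar a)$, and the H\"older/Burkholder bookkeeping for the two centered i.i.d.\ sums are all correct and match the paper. The gap is in the treatment of the random denominator.

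You acknowledge that $A_N$ has no deterministic lower bound and propose to split on $\{A_N\ge\bar a/2\}$. On the complement you invoke ``a conditional Burkholder bound'' to make the contribution negligible. But Burkholder only controls the \emph{probability} of $\{|A_N-\bar a|>\bar a/2\}$; you still have to bound $|C_{t,\tau_{n+1}}^{N,\param_n}|^p$ on that event, and on that event $1/A_N$ can be arbitrarily large. Without a further pathwise bound on $B_N/A_N$ the expectation $\mathbb{E}[|C|^p\mathbf{1}_{\{A_N<\bar a/2\}}]$ is not controlled, and the argument as written does not close.

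The paper sidesteps the truncation entirely by inserting the average weight $\Omega_{n,t}^N\eqdef N^{-1}\sum_\ell\ewght{t}{\ell}$ and using the algebraic identity
\[
\frac{(\bar a-A_N)B_N}{\bar a\,A_N}
=(\bar a-A_N)\,\frac{\Omega_{n,t}^N}{\bar a\,A_N}\,\frac{B_N}{\CExpparam{\Omega_{n,t}^N}{\mathcal{F}_{n,t-1}^N}{}}
+(\bar a-A_N)\bigl(\CExpparam{\Omega_{n,t}^N}{\mathcal{F}_{n,t-1}^N}{}-\Omega_{n,t}^N\bigr)\,\frac{B_N}{\bar a\,A_N}\,\frac{1}{\CExpparam{\Omega_{n,t}^N}{\mathcal{F}_{n,t-1}^N}{}}\eqsp.
\]
The point is that Lemma~\ref{Lem:Upperbounds}\eqref{Lem:Upperbounds:boundlow} gives $A_N\ge(\sigma_-/\sigma_+)\,\Omega_{n,t}^N$ \emph{pathwise}, so $\Omega_{n,t}^N/A_N\le\sigma_+/\sigma_-$; combined with \eqref{eq:bound-G} in the form $|B_N|\le\Omega_{n,t}^N\sum_s\rho^{|t-s|}\osc\{S(\cdot,\cdot,\bfY_{s+T_n})\}$, this yields a deterministic bound on $B_N/A_N$ as well. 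The random denominator $A_N$ thus disappears, and the two pieces are handled by conditional H\"older (exponents $\alpha,\beta$) plus Burkholder exactly as you outline. The second factor of $b_-(\bfY_{t+T_n})^{-1}$ does not come from a lower bound on $A_N$ but from $1/\CExpparam{\Omega_{n,t}^N}{\mathcal{F}_{n,t-1}^N}{}\lesssim b_-(\bfY_{t+T_n})^{-1}$ via Lemma~\ref{lem:esperancecond}. If you want to salvage your truncation route, the same pathwise inequality $|B_N/A_N|\le(\sigma_+/\sigma_-)\sum_s\rho^{|t-s|}\osc\{S(\cdot,\cdot,\bfY_{s+T_n})\}$ is precisely what you need on the bad event; but once you have it, the split on $\{A_N\ge\bar a/2\}$ is no longer necessary.
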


\begin{proof}
Lemma~\ref{lem:esperancecond} applied with the function $h =
\mathcal{L}_{t,\tau_{n+1}}^{\param_n}\1$ and (\ref{eq:lcal}) yield for any $1
\leq \ell \leq N$
\[
  \CExpparam{\ewght{t}{\ell}
    \mathcal{L}_{t,\tau_{n+1}}^{\param_n}\1(\epart{t}{\ell})}{\mathcal{F}_{n,t-1}^{N}}{}
  =
  \frac{\phi_{t-1}^{N,\param_n}\left[\mathcal{L}_{t-1,\tau_{n+1}}^{\param_n}\1\right]}{\phi_{t-1}^{N,\param_n}[\adjfunc{t}{}{}]}\eqsp.
\]
Therefore, by definition of $C_{t,\tau}^{N,\param}$ (see~\eqref{eq:defC}),
$C_{t,\tau_{n+1}}^{N,\param_n}(\addfunc{\tau_{n+1}})$ is equal to
\begin{multline*}
  \frac{ \CExpparam{ A_{n,t}^N }{\mathcal{F}_{n,t-1}^{N}}{} - A_{n,t}^N}{
    \CExpparam{ A_{n,t}^N }{\mathcal{F}_{n,t-1}^{N}}{} \ A_{n,t}^N} \ 
  B_{n,t}^N \eqsp = \left( \CExpparam{ A_{n,t}^N
    }{\mathcal{F}_{n,t-1}^{N}}{} - A_{n,t}^N\right)  \cdots \\
  \times \frac{\Omega_{n,t}^N}{ \CExpparam{ A_{n,t}^N
    }{\mathcal{F}_{n,t-1}^{N}}{} \ A_{n,t}^N} \left( \frac{B_{n,t}^N}{
      \CExpparam{ \Omega_{n,t}^N }{\mathcal{F}_{n,t-1}^{N}}{} } +
    \frac{B_{n,t}^N}{\Omega_{n,t}^N\CExpparam{ \Omega_{n,t}^N
      }{\mathcal{F}_{n,t-1}^{N}}{}} \left( \CExpparam{ \Omega_{n,t}^N
      }{\mathcal{F}_{n,t-1}^{N}}{}- \Omega_{n,t}^N\right)\right) \\
  = B_{n,t}^N \left( \CExpparam{ A_{n,t}^N }{\mathcal{F}_{n,t-1}^{N}}{} -
    A_{n,t}^N\right) \frac{\Omega_{n,t}^N}{ \CExpparam{ A_{n,t}^N
    }{\mathcal{F}_{n,t-1}^{N}}{} \ A_{n,t}^N} \frac{1}{
    \CExpparam{ \Omega_{n,t}^N }{\mathcal{F}_{n,t-1}^{N}}{} }   \\
 + \left( \CExpparam{ A_{n,t}^N }{\mathcal{F}_{n,t-1}^{N}}{} -
    A_{n,t}^N \right) \left( \CExpparam{ \Omega_{n,t}^N
      }{\mathcal{F}_{n,t-1}^{N}}{}- \Omega_{n,t}^N\right)\frac{B_{n,t}^N}{ \CExpparam{ A_{n,t}^N
    }{\mathcal{F}_{n,t-1}^{N}}{} \ A_{n,t}^N} \frac{1}{
    \CExpparam{ \Omega_{n,t}^N }{\mathcal{F}_{n,t-1}^{N}}{} } 
\ \end{multline*}
with 
\begin{align*}
  A_{n,t}^N & \eqdef N^{-1}\sum_{\ell=1}^{N}\ewght{t}{\ell}
  \frac{\mathcal{L}_{t,\tau_{n+1}}^{\param_n}\1(\epart{t}{\ell})}{|\mathcal{L}_{t,\tau_{n+1}}^{\param_n}\1|_{\infty}}
  \eqsp, \\
  B_{n,t}^N & \eqdef \frac{1}{N} \sum_{\ell=1}^N \ewght{t}{\ell}
  \frac{G^{N,\param_n}_{t,\tau_{n+1}}\addfunc{\tau_{n+1}}(\epart{t}{\ell})}{|\mathcal{L}_{t,\tau_{n+1}}^{\param_n}\1|_\infty}
  \eqsp, \\
  \Omega_{n,t}^N & \eqdef \frac{1}{N} \sum_{\ell=1}^N \ewght{t}{\ell} \eqsp.
\end{align*}
This can be rewritten, 
\[
C_{t,\tau}^{N,\param} = C_{1} + C_{2}\eqsp,
\]
with
\[
C_{1} = B_{n,t}^N \left( \CExpparam{ A_{n,t}^N }{\mathcal{F}_{n,t-1}^{N}}{} -
    A_{n,t}^N\right) \frac{\Omega_{n,t}^N}{ \CExpparam{ A_{n,t}^N
    }{\mathcal{F}_{n,t-1}^{N}}{} \ A_{n,t}^N} \frac{1}{
    \CExpparam{ \Omega_{n,t}^N }{\mathcal{F}_{n,t-1}^{N}}{} }
\]
and
\begin{multline*}
C_{2} = \left( \CExpparam{ A_{n,t}^N }{\mathcal{F}_{n,t-1}^{N}}{} -
    A_{n,t}^N \right) \left( \CExpparam{ \Omega_{n,t}^N
      }{\mathcal{F}_{n,t-1}^{N}}{}- \Omega_{n,t}^N\right)\\
      \times\frac{B_{n,t}^N}{ \CExpparam{ A_{n,t}^N
    }{\mathcal{F}_{n,t-1}^{N}}{} \ A_{n,t}^N} \frac{1}{
    \CExpparam{ \Omega_{n,t}^N }{\mathcal{F}_{n,t-1}^{N}}{} } \eqsp.
\end{multline*}
By Lemmas~\ref{lem:esperancecond} and Lemmas~\ref{Lem:Upperbounds}\eqref{Lem:Upperbounds:boundlow}, and A\ref{assum:dub:lec},
\[
\frac{1}{ \CExpparam{ \Omega_{n,t}^N }{\mathcal{F}_{n,t-1}^{N}}{} } \leq \frac{ \sigma_-|\adjfunc{}{}{}|_\infty}{b_{-}(\bfY_{t+T_n})} \eqsp; \qquad 
\frac{\Omega_{t,n}^N}{\CExpparam{ A_{n,t}^N }{\mathcal{F}_{n,t-1}^{N}}{} \ 
  A_{n,t}^N } \leq \left(\frac{\sigma_+}{\sigma_-}\right)^2 \frac{|\adjfunc{}{}{}|_\infty}{\sigma_-
b_{-}(\bfY_{t+T_n})};
\]
and by Lemma~\ref{Lem:Upperbounds}\eqref{Lem:Upperbounds:boundup}
\[
\frac{B_{n,t}^N}{\CExpparam{ A_{n,t}^N }{\mathcal{F}_{n,t-1}^{N}}{} \ 
  A_{n,t}^N } \leq \left(\frac{\sigma_+}{\sigma_-}\right)^2 \frac{|\adjfunc{}{}{}|_\infty}{\sigma_-
b_{-}(\bfY_{t+T_n})} \left( \sum_{s=1}^{\tau_{n+1}} \rho^{|t-s|} \osc(S(\cdot,\cdot,\bfY_{s+T_n}) \right) \eqsp.
\]
Therefore, there exists a constant $C$ s.t.
\begin{align*}
\mathbb{E}_{}\left[\left|C_{1}\right|^{p}\middle|\mathcal{F}_{n,t-1}^{N}\right]\leq C\left|\frac{1}{b_{-}(\bfY_{t+T_{n}})}\right|^{2p}\mathbb{E}_{}\left[\left|B_{n,t}^{N}\right|^{p}\left|\CExpparam{ A_{n,t}^N }{\mathcal{F}_{n,t-1}^{N}}{} \ 
 - A_{n,t}^N\right|^{p}\middle|\mathcal{F}_{n,t-1}^{N}\right]
\end{align*}
Applying the Holder inequality with $\alpha \eqdef \bar p/p\geq 1$ and $\beta^{-1} \eqdef 1-\alpha^{-1}$ yields
\begin{multline*}
\mathbb{E}_{}\left[\left|B_{n,t}^{N}\right|^{p}\left|\CExpparam{ A_{n,t}^N }{\mathcal{F}_{n,t-1}^{N}}{} \ 
 - A_{n,t}^N\right|^{p}\middle|\mathcal{F}_{n,t-1}^{N}\right]\\
 \leq \mathbb{E}_{}\left[\left|B_{n,t}^{N}\right|^{\alpha p}\middle|\mathcal{F}_{n,t-1}^{N}\right]^{1/\alpha}\mathbb{E}_{}\left[\left|\CExpparam{ A_{n,t}^N }{\mathcal{F}_{n,t-1}^{N}}{} \ 
 - A_{n,t}^N\right|^{\beta p}\middle|\mathcal{F}_{n,t-1}^{N}\right]^{1/\beta}\eqsp.
\end{multline*}
By Proposition~\ref{Prop:NormD}, 
\begin{align*}
\mathbb{E}_{}\left[\left|B_{n,t}^{N}\right|^{\alpha p}\middle|\mathcal{F}_{n,t-1}^{N}\right]^{1/\alpha}\\
&\hspace{-2cm}\leq CN^{(\frac{p}{2}\vee \frac{1}{\alpha})-p}\CExp{\left|\ewght{t}{1} \frac{G_{t,\tau_{n+1}}^{N,\param_n}\addfunc{\tau_{n+1}}(\epart{t}{1})}{|\mathcal{L}_{t,\tau_{n+1}}^{\param_n}\1|_{\infty}}\right|^{\alpha p}}{\mathcal{F}_{n,t-1}^{N}}^{1/\alpha}\\
&\hspace{-2cm}\leq  CN^{(\frac{p}{2}\vee \frac{1}{\alpha})-p}\ewght{+}{}(\bfY_{t+T_n})^{p}\mathbb{E}_{}\left[\left|\sum_{s=1}^{\tau_{n+1}}\rho^{|t-s|}\osc\{S(\cdot,\cdot, \bfY_{s+T_n})\}\right|^{\alpha p}\middle|\mathcal{F}_{n,t-1}^{N}\right]^{1/\alpha}\eqsp,
\end{align*}

Given $\mathcal{F}_{n,t-1}^{N}$, the random variables
$\left\{\mathbb{E}_{}\left[\left.\ewght{t}{\ell}
      \frac{\mathcal{L}_{t,\tau_{n+1}}^{\param_n}\1(\epart{t}{1})}{|\mathcal{L}_{t,\tau_{n+1}}^{\param_n}\1|_\infty}\right|\mathcal{F}_{n,t-1}^{N}\right]-
  \ewght{t}{\ell}
  \frac{\mathcal{L}_{t,\tau_{n+1}}\1(\epart{t}{\ell})}{|\mathcal{L}_{t,\tau_{n+1}}^{\param_n}\1|_\infty}\right\}_{\ell=1}^{N}$
are conditionally independent, centered and bounded by Lemma~\ref{Lem:Upperbounds}.
Following the same steps as in the proof of Proposition \ref{Prop:NormD}, there exists a constant $C$ such that
\begin{equation*}
\mathbb{E}_{}\left[\left|\CExpparam{ A_{n,t}^N }{\mathcal{F}_{n,t-1}^{N}}{} \ 
  -A_{n,t}^N\right|^{\beta p}\middle|\mathcal{F}_{n,t-1}^{N}\right]^{1/\beta}\leq CN^{(\frac{p}{2}\vee \frac{1}{\beta})-p}\ewght{+}{}(\bfY_{t+T_n})^{p}\eqsp.
\end{equation*}
Hence,
\begin{multline*}
\mathbb{E}_{}\left[\left|C_{1}\right|^{p}\right]\leq CN^{(\frac{p}{2}\vee \frac{1}{\alpha})+(\frac{p}{2}\vee \frac{1}{\beta})-2p}\\
\times\mathbb{E}_{}\left[\left|\frac{\ewght{+}{}(\bfY_{t+T_n})}{b_{-}(\bfY_{t+T_{n}})}\right|^{2p}\mathbb{E}_{}\left[\left|\sum_{s=1}^{\tau_{n+1}}\rho^{|t-s|}\osc\{S(\cdot,\cdot, \bfY_{s+T_n})\}\right|^{\bar p}\middle|\mathcal{F}_{n,t-1}^{N}\right]^{p/\bar p}\right]\eqsp.
\end{multline*}
Similarly, using
\[
\mathbb{E}_{}\left[\left| \CExpparam{ \Omega_{t,n}^N
    }{\mathcal{F}_{n,t-1}^{N}}{} -  \Omega_{t,n}^N\right|^{\alpha p}\middle|\mathcal{F}_{n,t-1}^{N}\right]^{1/\alpha}\leq CN^{(\frac{p}{2}\vee \frac{1}{\alpha})-p}\ewght{+}{}(\bfY_{t+T_n})^{p}\eqsp,
\]
yields
\begin{multline*}
\mathbb{E}_{}\left[\left|C_{2}\right|^{p}\right]\leq CN^{-p}\\
\times\mathbb{E}_{}\left[\left|\frac{\ewght{+}{}(\bfY_{t+T_n})}{b_{-}(\bfY_{t+T_{n}})}\right|^{2p}\mathbb{E}_{}\left[\left|\sum_{s=1}^{\tau_{n+1}}\rho^{|t-s|}\osc\{S(\cdot,\cdot, \bfY_{s+T_n})\}\right|^{\bar p}\middle|\mathcal{F}_{n,t-1}^{N}\right]^{p/\bar p}\right]\eqsp.
\end{multline*}
 \end{proof}
\bibliographystyle{plain}
\bibliography{./onlineblock}

\end{document}